\newtheorem{theorem}{Theorem}[section]
\newtheorem{remark}{Remark}[section]
\newtheorem{proposition}{Proposition}[section]
\newtheorem{definition}{Definition}[section]
\newcounter{nextauthor}
\def\mathrm{\mbox}
\numberwithin{remark}{section}
\begin{document}
\title{{\bf Consumption and portfolio optimization solvable problems with recursive preferences}\thanks{This work was supported by the National Natural Science Foundation of China (12171339), the Scientific and Technological Research Program of Chongqing Municipal Education Commission (KJQN202400819), the grant from Chongqing Technology and Business University (2356004) and the Fundamental Research Funds for the Central Universities (2682023CX071).}}
\author{Jian-hao Kang$^a$, Zhun Gou$^b$ and Nan-jing Huang$^c$ \thanks{Corresponding author: nanjinghuang@hotmail.com; njhuang@scu.edu.cn}\\
{\small a. School of Mathematics, Southwest Jiaotong University, Chengdu, Sichuan 610031, P.R. China}\\
{\small b. College of Mathematics and Statistics, Chongqing Technology and Business University,}\\
{\small Chongqing 400067, P.R. China}\\
{\small c. Department of Mathematics, Sichuan University, Chengdu, Sichuan 610064, P.R. China} }
\date{}
\maketitle \vspace*{-9mm}
\begin{abstract}
\noindent This paper considers the consumption and portfolio optimization problems with recursive preferences in both infinite and finite time regions, in which the financial market consists of a risk-free asset and a risky asset  following a general stochastic volatility process. By using Bellman's dynamic programming principle, the Hamilton-Jacobi-Bellman (HJB) equation is derived for characterizing the optimal consumption-investment strategy and the corresponding value function. Based on the conjecture of the exponential-polynomial form of the value function under mild conditions, we prove that, when the order of the polynomial $n\leq2$,  the HJB equation has an analytical solution if the investor with unit elasticity of intertemporal substitution and an approximate solution by the log-linear approximation method otherwise. We also prove that the HJB equation has no solutions under the conjecture of the exponential-polynomial form of the value function when the order of the polynomial $n>2$. Finally, the optimal consumption-portfolio strategies to Heston's model are provided and some numerical experiments are given to illustrate the behavior of the optimal consumption-portfolio strategies.
\\ \ \\
\noindent {\bf Keywords}: Stochastic volatility; consumption and investment; recursive preferences; HJB equation; Heston's model.
\\ \ \\
\noindent \textbf{AMS Subject Classification:}  93E20, 91G10, 91G80, 60H30.
\end{abstract}

\section{Introduction}
Since the seminal work by Merton \cite{Merton1969, Merton1971}, the investment and consumption problems have been studied extensively under the expected utility maximization criterion, in which the assumption of constant volatility for risky assets does not correspond with real situations. Numerous empirical studies documented that the stochastic volatility model can be used to capture actual situations better than the constant volatility model, such as describing the phenomena of the volatility clustering and the heavy tailedness of return distributions \cite{Cui2017, He2023, Lu2021}. Therefore, it is desirable to investigate the investment and consumption problems under the stochastic volatility model \cite{Kang2021, Liu2007, Pham2009, Zhang2016}. Amongst the aforementioned literature, the dynamic programming approach (or HJB approach) is one of the most mainly used method to deal with the investment and consumption problems. Specially, Liu \cite{Liu2007} considered dynamic consumption and portfolio choice problems with quadratic asset returns and the power utility preference, and mentioned that when adopting the form of an exponential-polynomial value function, the polynomial with orders greater than $2$ cannot help in solving the HJB equation, while no proof was provided. Recently, Cheng and Escobar-Anel \cite{Cheng2023} investigated the portfolio optimization solvable problem for the investor with the power utility preference under stochastic volatility processes  and tried to prove the limitation on the order of polynomial for the conjecture of value function. After checking the proof process of Proposition 4.1 in \cite{Cheng2023},  we find that there is a gap in the derivation of equations (33) to (34) because the terms with $v^{4}$ in (33) depend on the order values of $\lambda^{2}(v)$, $m_{1}(v)$, $\lambda(v)m_{2}(v)$ and $m^{2}_{2}(v)$ with respect to $v$.  Thus, how to prove the limitation on the order of polynomial for the conjecture of value function in the investment and consumption problem under the stochastic volatility and the power utility preference is still worth further research.

On the other hand, the recursive utility functions,  being used to separate the relative risk aversion from elasticity of intertemporal substitution (EIS), have been widely adopted to capture the investor's consumption and investment preferences \cite{Epstein1989, Kreps1978}. As a continuous time limit of the recursive utility function, the stochastic differential utility (SDU), generalized form of power utility preference, has been received more and more attention because it can be used to more clearly connect the manner in which uncertainty is resolved over time with the ability to infer axiomatic differences in preferences by observing actions \cite{Duffie1992}. For example, Schroder and Skiadas \cite{Schroder1999} developed the utility gradient (or martingale) approach to considering portfolio and consumption problem with SDU in finite time region. Chacko and Viceira \cite{Chacko2005} applied Bellman's dynamic programming principle to study optimal portfolio choice and consumption with SDU under inverse Heston stochastic volatility in infinite time region, and derived analytical solutions for unit EIS and adopted the log-linear approximation method to obtain approximate solutions for non-unit EIS. Kraft et al. \cite{Kraft2013} provided verification theorems for continuous-time optimal consumption and investment with Epstein-Zin recursive preferences in incomplete markets. They also obtained analytical solutions for non-unit EIS under the particular configuration of preference parameters and compared analytical solutions to approximate solutions under the log-linear approximation. It is worth noting that the work mentioned above adopted the conjecture of the exponential-polynomial form of the value function to solve optimal consumption and investment problems with recursive preferences and showed that the HJB equation has analytical or approximate solutions when the order of the polynomial $n=1$ or $n=2$.  What happens to the analytical or approximate solutions of the HJB equation if the order of the polynomial $n>2$? However, to the best of our knowledge, this has not been considered in the literature.

Inspired and motivated by the above research works, the present paper is thus devoted to the study of the consumption and portfolio optimization problems with SDU in a financial market governed by a general stochastic volatility model within both infinite and finite time regions, in which the conjecture of the exponential-polynomial form of the value function is employed to solve such problems. The main contributions of this paper are threefold. First, the method of changing control is used to reveal solvable models with more complex expressions for the drift and diffusion terms of the risky asset and the general stochastic volatility. Second, with the conjecture of the exponential-polynomial form of the value function under mild conditions, some new results are obtained for the optimal consumption-investment with SDU under stochastic volatility model as follows: (i) when the order of the polynomial $n\leq2$, the HJB equation has an analytical solution if the investor with unit EIS and an approximate solution by the log-linear approximation method otherwise; (ii) when the order of the polynomial $n>2$, the HJB equation has no analytical solutions or approximation solutions by the log-linear approximation method, which appears for the first time to provide complete proofs in the literature for consumption and portfolio optimization problems with recursive preferences and the stochastic volatility. Third, some applications of our study to Heston's model are provided. Specially, expressions for the analytical and approximate solutions are given to demonstrate the solvability analysis and some numerical experiments are presented to illustrate the behavior of the optimal consumption-portfolio strategies.

The rest of this paper is organized as follows. Section 2 introduces the financial market and related assumptions. Section 3 formulates the consumption and portfolio problems with recursive preferences. Section 4 provides the solvability analysis of consumption and portfolio problems. Section 5 presents some applications of our research to Heston's model and provides some numerical experiments. Section 6 gives some conclusions and appendices contain the proofs.

\section{Preliminaries and assumptions}
Let $(\Omega,\mathcal{F},\mathbb{F},\mathbb{P})$ be a complete filtered probability space equipped with a natural filtration $\mathbb{F}=\{\mathcal{F}_{t}\}_{t\geq0}$ satisfying the usual conditions and a physical probability measure $\mathbb{P}$, where $\mathbb{F}$ is generated by 2-dimensional Brownian motion $(W^{\nu},W^{\perp})$ that will be defined later. Let $O_{0}\subset\mathbb{R}^{2}$ be an open set and $O=[0,T]\times O_{0}$. Denote that
\begin{align*}
C^{1,2}(O)=&\left\{\varphi(t,x)|\varphi(t,\cdot)\; \text{is once continuously differentiable on} \;[0,T]\; \text{and}\right. \\
 &\;\left.\varphi(\cdot,x)\;\text{is twice continuously differentiable on}\; O_{0} \right\}.
\end{align*}

We consider a frictionless financial market consisting of a risk-free asset and a risky asset that can be traded continuously. Specifically, the price $M_{t}$ of the risk-free asset follows
\begin{equation}\label{eq1}
  \mathrm{d}M_{t}=rM_{t}\mathrm{d}t, \quad M_{0}=1,
\end{equation}
where $r$ is a positive constant that denotes the risk-free interest rate. Moreover, inspired by the studies \cite{Cheng2023, Kraft2017}, we assume that the price $S_{t}$ of the risky asset satisfies the following general model
\begin{equation}\label{eq2}
\frac{\mathrm{d}S_{t}}{S_{t}}=\left[r+\eta(\nu_{t})G(\nu_{t},t)\right]\mathrm{d}t+G(\nu_{t},t)\mathrm{d}W^{S}_{t}, \quad S_{0}=s_{0}>0
\end{equation}
with a state variable $\nu_{t}$ evolving as follows
\begin{equation}\label{eq3}
\mathrm{d}\nu_{t}=m_{1}(\nu_{t})\mathrm{d}t+m_{2}(\nu_{t})\mathrm{d}W^{\nu}_{t}, \quad \nu_{0}=\overline{\nu}>0,
\end{equation}
where $\eta(\nu_{t})$ represents the market price of risk, $G(\nu_{t},t)$ denotes the volatility of $S_{t}$,  and $m_{1}(\nu_{t})$ and $m_{2}(\nu_{t})$ are the drift and volatility of $\nu_{t}$. The correlation between $S_{t}$ and $\nu_{t}$ can be captured by $W^{S}_{t}$ and $W^{\nu}_{t}$ via the parameter $\rho\in[-1,1]$. Therefore, we can write $\mathrm{d}W^{S}_{t}=\rho\mathrm{d}W^{\nu}_{t}+\sqrt{1-\rho^{2}}\mathrm{d}W^{\perp}_{t}$, where $W^{\perp}$ is another standard Brownian motion independent of $W^{\nu}$.

Following the work \cite{Cheng2023}, we now make the following assumptions:
\begin{itemize}
  \item[(A.1)] All the coefficients of equations \eqref{eq2} and \eqref{eq3} are progressively measurable with respect to $\{\mathcal{F}_{t}\}_{t\geq0}$.
  \item[(A.2)] To guarantee the uniqueness of the solution to \eqref{eq2}, we assume (see, for example, \cite{Korn2001, Kraft2005})
\begin{equation}\label{eq4}
\int_{0}^{\infty}\left(|\eta(\nu_{t})G(\nu_{t},t)|+G^{2}(\nu_{t},t)\right)\mathrm{d}t<\infty\quad a.s.
\end{equation}
  \item[(A.3)] To obtain the existence of the solution to \eqref{eq3}, the growth condition on the coefficients of \eqref{eq3} holds. That is,
\begin{equation}\label{eq5}
m_{1}^{2}(x_{0})+m_{2}^{2}(x_{0})\leq K(1+x^{2}_{0})
\end{equation}
for $x_{0}\in\mathbb{R}$ and $|x_{0}|\leq K_{1}$ with positive constants $K$ and $K_{1}$.
  \item[(A.4)] To ensure the uniqueness of the solution to \eqref{eq3}, the Yamada-Watanabe condition (see, for example, Theorem 4 in \cite{Yamada1971}) holds. That is, there exist real-valued functions $\widetilde{f}(x)$ and $g(x)$ defined on $[0, K_{2})$ with $ K_{2}>0$ such that
\begin{align}\label{eq6}
|m_{1}(x_{0})-m_{1}(y_{0})|\leq \widetilde{f}(|x_{0}-y_{0}|), \quad
|m_{2}(x_{0})-m_{2}(y_{0})|\leq g(|x_{0}-y_{0}|)
\end{align}
for all $x_{0},y_{0}\in\mathbb{R}$ with $|x_{0}-y_{0}|<K_{2}$, where $\widetilde{f}$ and $g$ are continuous, positive and increasing with $\widetilde{f}(0)=g(0)=0$, and $\widetilde{f}(x)$ and $g^{2}(x)x^{-1}$ are concave and satisfy
\begin{equation}\label{eq7}
\int_{0+}\left[\widetilde{f}(x)+g^{2}(x)x^{-1}\right]^{-1}\mathrm{d}x=\infty.
\end{equation}
\end{itemize}

\section{Problem formulation}
In this section, we consider a rational investor who can invest in the aforementioned financial market. We denote by $\pi_{t}$ the proportion of wealth invested in the risky asset and $(1-\pi_{t})$ the remaining proportion of wealth invested in the risk-free asset. Moreover, the investor can consume at an instantaneous rate $c_{t}$ at time $t$. Therefore, given the initial wealth value $X_{0}$, the wealth dynamics of the investor is described by the following stochastic differential equation (SDE)
\begin{equation}\label{eq8}
\mathrm{d}X_{t}=\left[r+\pi_{t}\eta(\nu_{t})G(\nu_{t},t)\right]X_{t}\mathrm{d}t-c_{t}\mathrm{d}t
+\pi_{t}G(\nu_{t},t)X_{t}\mathrm{d}W^{S}_{t}.
\end{equation}

Now, we assume that the preferences of the investor are captured by recursive utility functions, which are also known as stochastic differential utilities in continuous time. Due to \cite{Chacko2005, Duffie1992}, the investor's preference can be given by
\begin{equation}\label{eq9}
J(t,x,c,\psi)=J_{t}=\mathbb{E}_{t}\left[\int^{\infty}_{t}f(c_{s},J_{s})\mathrm{d}s\right].
\end{equation}
Here, $\mathbb{E}_{t}$ represents the $\mathcal{F}_{t}$-conditional expectation with respect to the measure $\mathbb{P}$ and $f(c,J)$ denotes the Epstein-Zin aggregator of consumption and continuation value that takes the form
\begin{equation}\label{eq10}
f(c,J)=\beta(1-\frac{1}{\phi})^{-1}(1-\gamma)J\left[\left(\frac{c}{((1-\gamma)J)^{\frac{1}{1-\gamma}}}\right)^{1-\frac{1}{\phi}}-1\right],
\end{equation}
where $\beta>0$ denotes the investor's discount rate, $\gamma>0$ with $\gamma\neq1$ represents the relative risk aversion coefficient and $\phi>0$ with $\phi\neq1$ is the EIS parameter. When $\phi=1$, the aggregator $f(c,J)$ takes the form
\begin{equation}\label{eq11}
f(c,J)=\beta(1-\gamma)J\left[\ln c-\frac{1}{1-\gamma}\ln ((1-\gamma)J)\right].
\end{equation}
If $\phi=\frac{1}{\gamma}$, then $f$ takes the form of the power utility. Moreover, if $\phi=\gamma=1$, then $f$ takes the form of the logarithmic utility \cite{Wei2023}.

Next, given the wealth dynamics \eqref{eq8} and the recursive preference \eqref{eq10} or \eqref{eq11}, the investor aims to choose a consumption-investment strategy $(c_{t},\pi_{t})$ to maximize the recursive utility
\begin{align}\label{eq12}
\sup\limits_{(c_{t},\pi_{t})\in \widetilde{\Pi}_{1}} \; \mathbb{E}_{t}\left[\int^{\infty}_{t}f(c_{s},J_{s})\mathrm{d}s\right],
\end{align}
where $\widetilde{\Pi}_{1}$ denotes the set of admissible strategies as defined below.
\begin{definition}\label{definition3.1}
For $t\in[0,\infty)$, a consumption-investment strategy $(c_{t},\pi_{t})$ of \eqref{eq12} is admissible if the following conditions are satisfied:
\begin{itemize}
\item[(i)] $c_{t}$ and $\pi_{t}$ are $\{\mathcal{F}_{t}\}_{t\geq0}$-adapted feedback strategies $c=\{c_{t}\}_{t\geq0}=\{c(t,X_{t},\nu_{t})\}_{t\geq0}$ and $\pi=\{\pi_{t}\}_{t\geq0}=\{\pi(t,X_{t},\nu_{t})\}_{t\geq0}$ taking value in $(0,\infty)$ and $\mathbb{R}$, respectively;
\item[(ii)] for any initial values $(t, x, \nu)\in [0,\infty) \times \mathbb{R}^{+} \times \mathbb{R}$, the SDE $\eqref{eq8}$ for $\{X_{s}\}_{s\geq t}$ with $X_{t}=x$ and $\nu_{t}=\nu$ admits a unique positive strong solution;
\item[(iii)] $\mathbb{E}_{t}\left[\int^{\infty}_{t}\left|f(c_{s},J_{s})\right|\mathrm{d}s\right] <\infty$.
\end{itemize}
\end{definition}

For the sake of convenience, we denote a new control variable by
\begin{align}\label{eq13}
\psi_{t}=\pi_{t}G(\nu_{t},t)
\end{align}
and the corresponding admissible set by $\Pi_{1}$. Therefore, the wealth process \eqref{eq8} and problem \eqref{eq12} in term of the new control variable can be rewritten as
\begin{align}\label{eq14}
\mathrm{d}X_{t}=\left[r+\eta(\nu_{t})\psi_{t}\right]X_{t}\mathrm{d}t-c_{t}\mathrm{d}t+\psi_{t}X_{t}\mathrm{d}W^{S}_{t}
\end{align}
and
\begin{align}\label{eq15}
\sup\limits_{(c_{t},\psi_{t})\in \Pi_{1}} \; \mathbb{E}_{t}\left[\int^{\infty}_{t}f(c_{s},J_{s})\mathrm{d}s\right],
\end{align}
respectively. It seems that the equation \eqref{eq14} under $\psi_{t}$ looks simpler than \eqref{eq8}. In addition, if the problem \eqref{eq15} is solved in term of $(c_{t}, \psi_{t})$, then we can obtain the solution to the original problem \eqref{eq12}. In order to make sure the objective function \eqref{eq15} is well-defined, we suppose that
\begin{itemize}
  \item[(B.1)] for any $c\in(0,\infty)$ and $\psi\in\mathbb{R}$ with $\psi=\{\psi_{t}\}_{t\geq0}$, the equation \eqref{eq9} has a unique strong solution.
\end{itemize}

When the investment period is a finite interval, we follow \cite{Kraft2017, Kraft2013} to rewrite the investor's preference in \eqref{eq9} as
\begin{equation}\label{eq16}
J_{t}=\mathbb{E}_{t}\left[\int^{T}_{t}f(c_{s},J_{s})\mathrm{d}s+U(X_{T})\right],
\end{equation}
where $T>0$ denotes the terminal of investment interval, $U:(0,\infty)\rightarrow\mathbb{R}$ with $U(x)=\varepsilon^{1-\gamma}\frac{x^{1-\gamma}}{1-\gamma}$ is a constant relative risk aversion utility function for bequest, and where $\varepsilon\in(0,\infty)$ is a weight factor. Then the investor wants to adopt the control variable $(c_{t},\psi_{t})$ to maximize the preference \eqref{eq16} as follows
\begin{align}\label{eq17}
\sup\limits_{(c_{t},\psi_{t})\in \Pi_{2}} \; \mathbb{E}_{t}\left[\int^{T}_{t}f(c_{s},J_{s})\mathrm{d}s+U(X_{T})\right].
\end{align}
Here, $\Pi_{2}$ denotes the set of admissible strategies, which can be defined similarly as Definition \ref{definition3.1}. Moreover, in order to ensure the objective function \eqref{eq17} is well-defined, we assume that
\begin{itemize}
  \item[(B.2)] for any $c\in(0,\infty)$ and $\psi\in\mathbb{R}$, the equation \eqref{eq16} has a unique strong solution.
\end{itemize}
\begin{remark}
We would like to point out that the equations \eqref{eq9} and \eqref{eq16} can be rewritten as the following backward stochastic differential equation (BSDE)
\begin{equation}\label{eq84}
   \left\{ \begin{aligned}
  \mathrm{d}J_{s} =&-f(c_{s},J_{s})\mathrm{d}s+\widetilde{Z}_{s}^{1}\mathrm{d}W^{S}_{s}+\widetilde{Z}_{s}^{2}\mathrm{d}W^{\nu}_{s},\;\;s\geq t\\
   J_{T}=&U(X_{T})
  \end{aligned}\right.
\end{equation}
with $U(X_{T})\rightarrow0$ as $T\rightarrow\infty$. Thus, studying the unique strong solution of \eqref{eq9} or \eqref{eq16} can be also transformed into considering the unique strong solution of \eqref{eq84}. Interested readers may refer to Proposition 2.2 and Theorem 2.5 in \cite{Aurand2023}, Theorem B.2 in \cite{Herdegen2023}, Corollary 2.3 in \cite{Melnyk2020} and Proposition 2.2 in \cite{Xing2017} for more details on the uniqueness of recursive utility and the corresponding BSDE.
\end{remark}

\section{Optimal consumption and investment solvable problems}
In this section, we will adopt Bellman's dynamic programming principle to solve problems \eqref{eq15} and \eqref{eq17}, respectively. We note that a verification theorem is crucial to derive the HJB equation and to make sure that the solution to the HJB equation is indeed
the solution to the problem under consideration. By the fact that the aggregator $f(c,J)$ in \eqref{eq10} and \eqref{eq11} is non-Lipschitz, standard verification results (see, for example, \cite{Duffie1992}) are not applicable. Kraft et al. provided new verification theorems through Corollary B.1 and Theorem 3.1 in  \cite{Kraft2013} to consumption-portfolio problems with recursive utility under incomplete markets within infinite and finite time regions, respectively. To deal with the non-Lipschitz property of $f(c,J)$, we follow \cite{Kraft2013} to assume that there exists $L>0$ such that the aggregator $f(c,J)$ in \eqref{eq10} satisfies
\begin{itemize}
  \item[(B.3)] $f(c,J_{1})-f(c,J_{2})\leq L (J_{1}-J_{2})$ for all $c\in(0,\infty)$, $J_{1}$ and $J_{2}$ with $J_{1}\geq J_{2}$.
\end{itemize}
On the other hand, Fan et al. \cite{Fan2011} established a general comparison theorem for solutions of one-dimensional BSDEs with weakly monotonic generators. Inspired by \cite{Fan2011}, we make the following assumption for the aggregator $f(c,J)$ in \eqref{eq11} that
\begin{itemize}
  \item[(B.4)] there exists a nondecreasing concave continuous function $\chi(\cdot)$ from $\mathbb{R}^{+}$ to itself with $\chi(0)=0$ and $\chi(y)>0$ for $y>0$ such that $\int_{0^{+}}\frac{1}{\chi(y)}\mathrm{d}y=\infty$ and for all $c$, $J_{1}$ and $J_{2}$,
      $sgn(J_{1}-J_{2})\times\left[f(c,J_{1})-f(c,J_{2})\right]\leq \chi(\left|J_{1}-J_{2}\right|)$ a.s..
\end{itemize}
In the sequel, we will utilize the conditions (B.3) and (B.4) to give the corresponding verification theorem.

\subsection{The case of infinite time-horizon}
 First of all, we consider the case of the infinite time interval. When the investor has unit EIS, which implies the aggregator $f(c,J)$ takes the form \eqref{eq11} in the problem \eqref{eq15}, we assume that
\begin{itemize}
  \item[(B.5)] $c\in[C_{m},C_{M}]$ with $0<C_{m}<C_{M}<\infty$ and for any $c\in[C_{m},C_{M}]$ and $\psi\in\mathbb{R}$, $\mathbb{E}[J_{T}]\rightarrow0$ and $\mathbb{E}[\omega(X_{T})]\rightarrow0$ as $T\rightarrow\infty$, where $J_{T}$ is given by \eqref{eq9};
  \item[(B.6)] all coefficients are time-homogeneous and for any $c\in[C_{m},C_{M}]$ and $\psi\in\mathbb{R}$,
      $\int^{\cdot}_{t}\omega_{x}(X_{s})\psi_{s}X_{s}\sqrt{1-\rho^{2}}\mathrm{d}W^{\perp}_{s}$ and $\int^{\cdot}_{t}\left[\omega_{x}(X_{s})\psi_{s}X_{s}\rho+\omega_{\nu}(X_{s})m_{2}(\nu_{s})\right]\mathrm{d}W^{\nu}_{s}$ are uniformly integrable martingales.
\end{itemize}
\noindent Then, it follows from \cite{Chacko2005, Kraft2013} that the corresponding HJB equation can be given by
\begin{equation}\label{eq18}
\begin{aligned}
\sup\limits_{c\in[C_{m},C_{M}],\psi\in\mathbb{R}} \;&\left\{\left[r+\psi\eta(\nu)\right]x\omega_{x}-c\omega_{x}+\frac{1}{2}\psi^{2}x^{2}\omega_{xx}+m_{1}(\nu)\omega_{\nu}+\frac{1}{2}m^{2}_{2}(\nu)\omega_{\nu\nu}\right.\\
&\left.\quad+x\psi\rho m_{2}(\nu)\omega_{x\nu}+\beta(1-\gamma)\omega\left[\ln c-\frac{1}{1-\gamma}\ln ((1-\gamma)\omega)\right]\right\}=0,
\end{aligned}
\end{equation}
where $\omega$ denotes the candidate value function of the problem \eqref{eq15} when taking the form \eqref{eq11}, and $\omega_{x}$, $\omega_{\nu}$, $\omega_{xx}$, $\omega_{\nu\nu}$ and $\omega_{x\nu}$ represent the first and second partial derivatives of $\omega$ with respect to $x$ and $\nu$.
We will adopt the comparison theorem of BSDE under the weakly monotonic condition to give the verification theorem as follows.

\begin{theorem}\label{theorem4.5}
Assume that the conditions (B.1), (B.5) and (B.6) are satisfied. If $\omega\in C^{2}(\mathbb{R}^{+}\times\mathbb{R})$ is a solution of the HJB equation \eqref{eq18} and
\begin{align}\label{eq83}
(c^{*},\psi^{*})=\arg\max\limits_{c\in[C_{m},C_{M}],\psi\in\mathbb{R}} \;&\left\{\left[r+\psi\eta(\nu)\right]x\omega_{x}-c\omega_{x}+\frac{1}{2}\psi^{2}x^{2}\omega_{xx}+m_{1}(\nu)\omega_{\nu}+\frac{1}{2}m^{2}_{2}(\nu)\omega_{\nu\nu}\right.\nonumber\\
&\left.\quad+x\psi\rho m_{2}(\nu)\omega_{x\nu}+\beta(1-\gamma)\omega\left[\ln c-\frac{1}{1-\gamma}\ln ((1-\gamma)\omega)\right]\right\},
\end{align}
then $c^{*}$ and $\psi^{*}$ are optimal and $\omega$ coincides with the value function of the problem \eqref{eq15} when the aggregator $f(c,J)$ takes the form \eqref{eq11}.
\end{theorem}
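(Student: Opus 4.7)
The plan is to adapt a classical verification argument to the non-Lipschitz aggregator \eqref{eq11} via the comparison principle for BSDEs with weakly monotonic generators. First, I fix initial data $(t,x,\nu)\in[0,\infty)\times\mathbb{R}^{+}\times\mathbb{R}$ and an arbitrary admissible pair $(c,\psi)\in\Pi_{1}$ with associated wealth process $X=X^{c,\psi}$. For a finite horizon $T>t$, I apply It\^o's formula to $\omega(X_{s},\nu_{s})$ on $[t,T]$ and substitute the HJB equation \eqref{eq18}: because the supremum in \eqref{eq18} vanishes identically, the drift of $\omega(X_{s},\nu_{s})$ under any admissible $(c,\psi)$ equals $-f(c_{s},\omega(X_{s},\nu_{s}))-\delta^{c,\psi}_{s}$ for some nonnegative slack process $\delta^{c,\psi}$, with $\delta^{c^{*},\psi^{*}}\equiv 0$ by the optimality in \eqref{eq83}.

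Next, I would interpret $Y^{1}_{s}:=\omega(X_{s},\nu_{s})$ and the recursive utility $Y^{2}_{s}:=J^{c,\psi}_{s}$ (well-defined by (B.1) and the BSDE representation noted in the remark following (B.2)) as solutions on $[t,T]$ of BSDEs driven by $(W^{\nu},W^{\perp})$ with generators $-f(c_{s},y)-\delta^{c,\psi}_{s}$ and $-f(c_{s},y)$, respectively, both carrying the terminal value $\omega(X_{T},\nu_{T})$ (for $Y^{2}$ one first truncates the recursive utility at horizon $T$ with this terminal condition). Hypothesis (B.4) guarantees that the generator $-f(c,\cdot)$ in the unit-EIS form \eqref{eq11} satisfies the weakly monotonic condition, so the comparison theorem of Fan et al.\ \cite{Fan2011} yields $\omega(X_{s},\nu_{s})\geq Y^{2}_{s}$ on $[t,T]$, with equality when $(c,\psi)=(c^{*},\psi^{*})$. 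To pass to the infinite horizon, I invoke (B.6) to conclude that the local-martingale parts in the It\^o expansion are true martingales with zero expectation, and (B.5) to send $\mathbb{E}[\omega(X_{T},\nu_{T})]\to 0$ and $\mathbb{E}[J^{c,\psi}_{T}]\to 0$ as $T\to\infty$. In the limit this produces $\omega(x,\nu)\geq \mathbb{E}_{t}\bigl[\int_{t}^{\infty}f(c_{s},J^{c,\psi}_{s})\,ds\bigr]$ for every admissible strategy, with equality at $(c^{*},\psi^{*})$, which identifies $\omega$ with the value function of \eqref{eq15} and confirms that $(c^{*},\psi^{*})$ is optimal.

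The main obstacle is the non-Lipschitz dependence of the aggregator \eqref{eq11} on $J$ through the term $J\ln((1-\gamma)J)$, which precludes the standard Duffie--Epstein verification route and forces the use of the weakly monotonic comparison theorem. Verifying that $-f(c,\cdot)$ really fits the concavity and integrability requirements of (B.4), and controlling, via (B.5) together with the bounded consumption restriction $c\in[C_{m},C_{M}]$, the terminal remainders at $T=\infty$ so that they vanish in expectation uniformly over the admissible class, are the two most delicate technical steps; the rest is a standard application of the BSDE comparison principle plus passage to the limit.
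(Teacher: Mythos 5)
Your overall architecture coincides with the paper's: apply It\^o's formula to $\omega$ along the controlled state, observe that the HJB equation \eqref{eq18} makes $\omega(X_\cdot,\nu_\cdot)$ a supersolution of the utility BSDE with a nonnegative slack that vanishes at the maximizer \eqref{eq83}, invoke the comparison theorem of Fan, Jiang and Tian for generators satisfying a weak monotonicity condition, and pass to the infinite horizon using (B.5) and (B.6). That much is faithful to the paper's proof.

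The genuine gap is your treatment of (B.4). You write that ``Hypothesis (B.4) guarantees that the generator \ldots satisfies the weakly monotonic condition,'' but (B.4) is not among the hypotheses of Theorem \ref{theorem4.5} (which assumes only (B.1), (B.5) and (B.6)), and it cannot simply be postulated: it is a property of the specific aggregator \eqref{eq11} that has to be \emph{established}, and doing so constitutes the bulk of the paper's proof. Concretely, the paper sets $c_{0}=(1-\gamma)\ln c-\ln(1-\gamma)$, uses the boundedness $c\in[C_{m},C_{M}]$ from (B.5) to bound $c_{0}$, and constructs the explicit modulus $\chi$ in \eqref{eq80} (equal to $-y(\ln y-c_{0})$ near the origin and affine beyond $e^{-2+c_{0}}$), then verifies $\mathrm{sgn}(J_{1}-J_{2})\left[f(c,J_{1})-f(c,J_{2})\right]\leq\beta\,\chi(|J_{1}-J_{2}|)$ by a four-case analysis that exploits the monotonicity of $J\mapsto f(c,J)$ on $(e^{-1+c_{0}},\infty)$, a secant-slope bound through the point $e^{-1+c_{0}}$, and the integral identity $\int_{J_{2}}^{J_{1}}\left[-1-\ln y+c_{0}\right]\mathrm{d}y\leq\int_{0}^{J_{1}-J_{2}}\left[-1-\ln y+c_{0}\right]\mathrm{d}y$, together with the checks that $\chi$ is nondecreasing, concave, continuous, and satisfies $\int_{0^{+}}\chi(y)^{-1}\mathrm{d}y=\infty$. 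You correctly flag this verification as ``one of the two most delicate technical steps,'' but flagging it is not the same as doing it; without the explicit $\chi$ and the case analysis, the appeal to the comparison theorem of \cite{Fan2011} has no foundation, since the map $J\mapsto J\ln((1-\gamma)J)$ is neither Lipschitz nor monotone near the origin and the admissibility of the comparison argument hinges precisely on this construction. The remainder of your outline (the slack-process decomposition, the truncation at horizon $T$, and the limits via (B.5) and (B.6)) matches the paper and is sound.
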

\begin{proof}
Letting $c_{0}=(1-\gamma)\ln c-\ln(1-\gamma)$ and
\begin{align}\label{eq80}
\chi(y)=
\begin{cases}
0, \quad\quad\quad\quad\quad\;\quad\quad\;\text{for}\; \quad\, y=0,\\
-y(\ln y-c_{0}), \quad\quad\;\text{for}\;\quad  0<y\leq e^{-2+c_{0}},\\
y+e^{-2+c_{0}},  \quad\quad\quad\;\text{for}\;\quad  y> e^{-2+c_{0}},
\end{cases}
\end{align}
we have $\frac{1}{\beta}\left[f(c,J_{1})-f(c,J_{2})\right]=c_{0}(J_{1}-J_{2})-(J_{1}\ln J_{1}-J_{2}\ln J_{2})$.
By the condition (B.5), we know that $c\in[C_{m},C_{M}]$ and then $c_{0}$ is bounded. Moreover, it follows from the equation \eqref{eq80} that $\chi(y)$ is a nondecreasing concave continuous function from $\mathbb{R}^{+}$ to itself with $\chi(0)=0$ and $\chi(y)>0$ for $y>0$. Furthermore, it is not hard to verify that $\int_{0^{+}}\frac{1}{\chi(y)}\mathrm{d}y=\infty$.

Next, we will show that when $c\in[C_{m},C_{M}]$, the condition (B.4) holds for the aggregator $f(c,J)$ in \eqref{eq11}. Firstly, we consider the case (1): $J_{1},J_{2}> e^{-1+c_{0}}$.

(1a) If $J_{1}>J_{2}$, according to the monotonic decrease of $f(c,J)$ with respect to $J$ on $(e^{-1+c_{0}},\infty)$, then one has
$\frac{sgn(J_{1}-J_{2})}{\beta}\times\left[f(c,J_{1})-f(c,J_{2})\right]<0\leq\chi(\left|J_{1}-J_{2}\right|)$.

(1b) If $J_{1}<J_{2}$, similar to the case (1a), then one can derive
$\frac{sgn(J_{1}-J_{2})}{\beta}\times\left[f(c,J_{1})-f(c,J_{2})\right]<0\leq\chi(\left|J_{1}-J_{2}\right|)$.

Secondly, we consider the case (2): $0\leq J_{1}\leq e^{-1+c_{0}}<J_{2}$.

(2a) If $f(c,J_{1})\geq f(c,J_{2})$, then we can obtain that $\frac{sgn(J_{1}-J_{2})}{\beta}\times\left[f(c,J_{1})-f(c,J_{2})\right]\leq0\leq\chi(\left|J_{1}-J_{2}\right|)$.

(2b) If $f(c,J_{1})< f(c,J_{2})$, then we have that $\frac{f(c,J_{2})-f(c,J_{1})}{J_{2}-J_{1}}\leq\frac{f(c,e^{-1+c_{0}})-f(c,J_{1})}{e^{-1+c_{0}}-J_{1}}\leq\frac{f(c,e^{-1+c_{0}})}{e^{-1+c_{0}}}=\beta$. Moreover, since $\chi_{y}(y)\geq1$, one has $\chi(y)\geq y$. Thus, one has $\frac{sgn(J_{1}-J_{2})}{\beta}\times\left[f(c,J_{1})-f(c,J_{2})\right]=\frac{f(c,J_{2})-f(c,J_{1})}{\beta}\leq J_{2}-J_{1}\leq\chi(\left|J_{1}-J_{2}\right|)$.
Similarly, the results for the case (3): $0\leq J_{2}\leq e^{-1+c_{0}}<J_{1}$ can be also derived.

Finally, we consider the case (4): $0\leq J_{1},J_{2}\leq e^{-1+c_{0}}$.

(4a) If $J_{1}>J_{2}$, then
\begin{align*}
  &\frac{sgn(J_{1}-J_{2})}{\beta}\times\left[f(c,J_{1})-f(c,J_{2})\right]\\
  &=\frac{f(c,J_{1})-f(c,J_{2})}{\beta}\nonumber\\
  &=\int^{J_{1}}_{J_{2}}\left[-1-\ln y +c_{0}\right]\mathrm{d}y\\
  &\leq \int^{J_{1}-J_{2}}_{0}\left[-1-\ln y +c_{0}\right]\mathrm{d}y\\
  &=-y(\ln y-c_{0})|^{J_{1}-J_{2}}_{0}\\
  &=-(J_{1}-J_{2})\left[\ln(J_{1}-J_{2})-c_{0}\right]\\
  &\leq\chi(\left|J_{1}-J_{2}\right|),
\end{align*}
where we apply the monotonic decreasing property of function $-1-\ln y +c_{0}$ on $(0,\infty)$ in the third inequality.

(4b) If $J_{1}<J_{2}$, similar to the case (4a), then we can derive that $\frac{sgn(J_{1}-J_{2})}{\beta}\times\left[f(c,J_{1})-f(c,J_{2})\right]\leq\chi(\left|J_{1}-J_{2}\right|)$.

Now, applying the It\^{o} formula to $\omega(t,\nu_{t},X_{t})$, one has
\begin{align}\label{eq81}
  \mathrm{d}\omega(s,\nu_{s},X_{s}) =&\left\{\left[r+\psi_{s}\eta(\nu_{s})\right]X_{s}\omega_{x}(s,\nu_{s},X_{s})-c_{s}\omega_{x}(s,\nu_{s},X_{s})+\frac{1}{2}\psi^{2}_{s}X^{2}_{s}\omega_{xx}(s,\nu_{s},X_{s})
   \right.\nonumber\\
   &\left.+m_{1}(\nu_{s})\omega_{\nu}(s,\nu_{s},X_{s})+\frac{1}{2}m^{2}_{2}(\nu_{s})\omega_{\nu\nu}(s,\nu_{s},X_{s})
+X_{s}\psi_{s}\rho m_{2}(\nu_{s})\omega_{x\nu}(s,\nu_{s},X_{s})\right\}\mathrm{d}s\nonumber\\
&+\psi_{s}X_{s}\omega_{x}(s,\nu_{s},X_{s})\mathrm{d}W^{S}_{s}+m_{2}(\nu_{s})\omega_{\nu}(s,\nu_{s},X_{s})\mathrm{d}W^{\nu}_{s}
\end{align}
with $\omega(T,\nu_{T},X_{T})\rightarrow0$ as $T\rightarrow\infty$. Under the condition (B.1), the definition of $J_{t}$ in \eqref{eq15} with the aggregator $f(c,J)$ given by \eqref{eq11} can be rewritten as the following BSDE form
\begin{align}\label{eq82}
  \mathrm{d}J_{s} =&-\beta(1-\gamma)J_{s}\left[\ln c-\frac{1}{1-\gamma}\ln ((1-\gamma)J_{s})\right]\mathrm{d}s+Z_{s}^{1}\mathrm{d}W^{S}_{s}+Z_{s}^{2}\mathrm{d}W^{\nu}_{s}
\end{align}
with $J_{T}\rightarrow0$ as $T\rightarrow\infty$, where $\mathbb{E}_{t}\left[\int^{\infty}_{t}|Z_{s}^{1}|^{2}\mathrm{d}s\right]<\infty$ and $\mathbb{E}_{t}\left[\int^{\infty}_{t}|Z_{s}^{2}|^{2}\mathrm{d}s\right]<\infty$. By the comparison theorem for BSDEs (see, Theorem 2 in \cite{Fan2011}), it follows from the condition (B.5) and the equations \eqref{eq18}, \eqref{eq81} and \eqref{eq82} that for any $t$, $J_{t}\leq \omega(t,\nu_{t},X_{t})$ a.s.. Therfore, $J(t,x,c,\psi)\leq \omega(t,\nu,x)$ for any $t$. Since $c$ and $\psi$ are arbitrary, one can obtain that $\sup\limits_{c,\psi} \;J(t,x,c,\psi)\leq\omega(t,\nu,x)$. On the other hand, by using the equation \eqref{eq83}, we derive that $J(t,x,c^{*},\psi^{*})=\omega(t,\nu,x)$. Thus, $c^{*}$ and $\psi^{*}$ are optimal and $\omega$ is the value function of the problem \eqref{eq15} when the aggregator $f(c,J)$ takes the form \eqref{eq11}.
\end{proof}

By differentiating the expression inside the bracket of $\eqref{eq18}$ with respect to $c$ and $\psi$, respectively, one can obtain
\begin{equation}\label{eq19}
   \left\{ \begin{aligned}
   &c^{*}_{t}=\frac{\beta(1-\gamma)\omega}{\omega_{x}},\\
   &\psi^{*}_{t}=-\frac{\eta(\nu)\omega_{x}+\rho m_{2}(\nu)\omega_{x\nu}}{x\omega_{xx}}.
  \end{aligned}\right.
\end{equation}
We make an ansatz that $\omega(x,\nu)=\frac{x^{1-\gamma}}{1-\gamma}h(\nu)^{1-\gamma}$ for some deterministic function $h(\nu)$. Substituting $\eqref{eq19}$ and the ansatz for $\omega(x,\nu)$ into $\eqref{eq18}$ yields the following partial differential equation (PDE)
\begin{align}\label{eq20}
  r&-\beta+\frac{1}{2\gamma}\eta^{2}(\nu)+\beta\left(\ln\beta-\ln h\right)+\left[m_{1}(\nu)+\frac{\eta(\nu)}{\gamma}\rho m_{2}(\nu)(1-\gamma)\right]\frac{h_{\nu}}{h}\nonumber\\
  &+\frac{ 1}{2}m^{2}_{2}(\nu)\left[\frac{\rho^{2}(1-\gamma)^{2}}{\gamma}-\gamma\right]\frac{h^{2}_{\nu}}{h^{2}}+\frac{ 1}{2}m^{2}_{2}(\nu)\frac{h_{\nu\nu}}{h}=0.
\end{align}
We suppose that
\begin{itemize}
  \item[(B.7)] $\eta^{2}(\nu)$ is quadratic in $\nu$, $m_{1}(\nu)$ and $\eta(\nu) m_{2}(\nu)$ is linear in $\nu$, and $m^{2}_{2}(\nu)$ is a constant.
\end{itemize}
In what follows, similar to the discussions in \cite{Cheng2023}, we will make a general ansatz for $h$ as an exponential-polynomial form and prove that when the order of the polynomial is higher than 2, the PDE \eqref{eq20} has no analytical solutions under the condition (B.7).
\begin{theorem}\label{theorem4.1}
If the condition (B.7) is satisfied and $h(\nu)$ is conjectured in the exponential-polynomial form such that
\begin{align}\label{eq21}
\omega(x,\nu)=\frac{x^{1-\gamma}}{1-\gamma}h(\nu)^{1-\gamma}=\frac{x^{1-\gamma}}{1-\gamma}\exp\left\{(1-\gamma)\left(A_{0}+\sum\limits_{k=1}^{n}\frac{1}{k}A_{k}\nu^{k}\right)\right\},
\end{align}
where $A_{k}$ for all $k=0,\cdot\cdot\cdot,n$ are constants, then the PDE \eqref{eq20} has no analytical solutions when the order $n$ of the polynomial is higher than 2.
\end{theorem}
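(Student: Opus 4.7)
The plan is to substitute the ansatz directly into the PDE \eqref{eq20}, convert the resulting equation into a polynomial identity in $\nu$, and then show by a degree count that the coefficient of the highest-order monomial cannot be made to vanish when $n>2$. Writing $p(\nu) := A_{0} + \sum_{k=1}^{n}\tfrac{A_{k}}{k}\nu^{k}$ so that $h=e^{p}$, one has $h_{\nu}/h = p'$ and $h_{\nu\nu}/h = p'' + (p')^{2}$. Merging the two quadratic-in-$p'$ pieces of \eqref{eq20} yields the equivalent identity
\begin{align*}
0 &= r - \beta + \tfrac{1}{2\gamma}\eta^{2}(\nu) + \beta(\ln\beta - p(\nu)) + \left[m_{1}(\nu) + \tfrac{\rho(1-\gamma)}{\gamma}\eta(\nu)m_{2}(\nu)\right] p'(\nu) \\
 &\quad + \tfrac{1}{2}m_{2}^{2}(\nu)\, B\, (p'(\nu))^{2} + \tfrac{1}{2}m_{2}^{2}(\nu)\, p''(\nu),
\end{align*}
where $B := \tfrac{(1-\gamma)[\rho^{2}(1-\gamma)+\gamma]}{\gamma}$ is the merged coefficient of $(p')^{2}$.

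Next I would tabulate the $\nu$-degree of each additive term under (B.7): $r-\beta+\beta\ln\beta$ has degree $0$; $\tfrac{1}{2\gamma}\eta^{2}(\nu)$ has degree at most $2$; $-\beta p(\nu)$ has degree $n$; the product $\bigl[m_{1}+\tfrac{\rho(1-\gamma)}{\gamma}\eta m_{2}\bigr]p'$ has degree at most $1+(n-1)=n$; $\tfrac{1}{2}m_{2}^{2}p''$ has degree $n-2$; and $\tfrac{1}{2}m_{2}^{2}B(p')^{2}$ has degree $2(n-1)=2n-2$ with leading coefficient $\tfrac{1}{2}m_{2}^{2}BA_{n}^{2}$. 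For $n\ge 3$ one has $2n-2>n$, so the unique contribution to $\nu^{2n-2}$ on the left-hand side is $\tfrac{1}{2}m_{2}^{2}BA_{n}^{2}\nu^{2n-2}$; since the identity must hold for every $\nu$, this coefficient must vanish. The degree bounds in (B.7) are essential here: without them, higher-degree pieces of $\eta^{2}$, $m_{1}$, $\eta m_{2}$, or $m_{2}^{2}$ could in principle reach $\nu^{2n-2}$ and cancel the leading term, which is precisely the gap in \cite{Cheng2023} flagged in the introduction.

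The remaining and main step is to rule out the degenerate vanishings of $\tfrac{1}{2}m_{2}^{2}BA_{n}^{2}$. Here $A_{n}\ne 0$ by definition of the polynomial order and $m_{2}\not\equiv 0$ by non-degeneracy of the diffusion, so one only needs $B\ne 0$. Since $\gamma>0$ and $\gamma\ne 1$, the equation $B=0$ reduces to $\rho^{2}=\tfrac{\gamma}{\gamma-1}$: for $0<\gamma<1$ the right-hand side is negative, while for $\gamma>1$ one has $\rho^{2}=1+\tfrac{1}{\gamma-1}>1$; both cases contradict $|\rho|\le 1$. Hence $B\ne 0$, forcing $A_{n}=0$ and contradicting the assumption that the exponent polynomial has order exactly $n$. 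I expect the only real obstacle to be careful bookkeeping at the degree count: once $B$ is correctly isolated as the merged coefficient of $(p')^{2}$ and (B.7) is invoked to cap the degrees of $\eta^{2}$, $m_{1}\cdot p'$, $\eta m_{2}\cdot p'$ and $m_{2}^{2}\cdot p''$ at $n$, the contradiction at order $\nu^{2n-2}$ follows from pure algebra.
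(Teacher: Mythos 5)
Your proposal is correct and follows essentially the same route as the paper's proof: after substituting the ansatz, both arguments isolate the top-degree term $\nu^{2n-2}$ coming from $(h_{\nu}/h)^{2}$, whose coefficient under (B.7) is $\tfrac{1}{2}m_{2}^{2}A_{n}^{2}\bigl[1-\gamma+\tfrac{\rho^{2}}{\gamma}(1-\gamma)^{2}\bigr]$, and then show this cannot vanish because it would force $\rho^{2}=\tfrac{\gamma}{\gamma-1}$, contradicting $|\rho|\le 1$. Your uniform degree count for all $n\ge 3$ is a slightly cleaner packaging of the paper's split into the cases $n=3$ and $n>3$, but the key identity and the contradiction are identical.
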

\begin{proof}
In the following proof process, we first consider the case when $n=3$ and then discuss the case when $n>3$.

\noindent\textbf{Exponential-cubic:} Assume that $h(\nu)=\exp\{A_{0}+A_{1}\nu+\frac{1}{2}A_{2}\nu^{2}+\frac{1}{3}A_{3}\nu^{3}\}$.

Inserting the partial derivatives of $h$ into \eqref{eq20}, one has
\begin{align}\label{eq22}
  r&-\beta+\frac{1}{2\gamma}\eta^{2}(\nu)+\beta(\ln\beta-A_{0}-A_{1}\nu-\frac{1}{2}A_{2}\nu^{2}-\frac{1}{3}A_{3}\nu^{3})+\left[m_{1}(\nu)+\frac{\eta(\nu)}{\gamma}\rho m_{2}(\nu)(1-\gamma)\right](A_{1}+A_{2}\nu+A_{3}\nu^{2})\nonumber\\
  &+\frac{ 1}{2}m^{2}_{2}(\nu)\left[(1-\gamma)(A_{1}+A_{2}\nu+A_{3}\nu^{2})^{2}+A_{2}+2A_{3}\nu\right]
  +\frac{\rho^{2}}{2\gamma}m^{2}_{2}(\nu)(1-\gamma)^{2}(A_{1}+A_{2}\nu+A_{3}\nu^{2})^{2}=0.
\end{align}
Note that the above equation has terms involving $\nu^{4}$. For this equation to be solvable, one needs to cancel the terms involving $\nu^{4}$. It implies that under the condition (B.7),
\begin{equation}\label{eq23}
  1-\gamma+\frac{\rho^{2}}{\gamma}(1-\gamma)^{2}=0
\end{equation}
should be satisfied, which is equivalent to $\rho^{2}=\frac{\gamma}{\gamma-1}$ due to $\gamma\neq1$. If $\gamma>1$, then $\rho^{2}>1$, which contradicts the fact that $\rho\in[-1,1]$. If $0<\gamma<1$, then $\rho^{2}<0$, which is impossible. Therefore, the terms involving $\nu^{4}$ in \eqref{eq22} cannot be matched. Thus, the conjecture of the exponential-cubic form of $h$ cannot solve the PDE \eqref{eq20}.

\noindent\textbf{Exponential-nth:} Assume that $h(\nu)=\exp\{A_{0}+\sum\limits_{k=1}^{n}\frac{1}{k}A_{k}\nu^{k}\}$ with $n>3$.

By substituting the partial derivatives of $h$ into \eqref{eq20}, we have
\begin{align*}
  r&-\beta+\frac{1}{2\gamma}\eta^{2}(\nu)+\beta\left(\ln\beta-A_{0}-\sum\limits_{k=1}^{n}\frac{1}{k}A_{k}\nu^{k}\right)+\left[m_{1}(\nu)+\frac{\eta(\nu)}{\gamma}\rho m_{2}(\nu)(1-\gamma)\right]\sum\limits_{k=1}^{n}A_{k}\nu^{k-1}\nonumber\\
  &+\frac{ 1}{2}m^{2}_{2}(\nu)\left[(1-\gamma)\left(\sum\limits_{k=1}^{n}A_{k}\nu^{k-1}\right)^{2}+\sum\limits_{k=2}^{n}(k-1)A_{k}\nu^{k-2}\right]
  +\frac{\rho^{2}}{2\gamma}m^{2}_{2}(\nu)(1-\gamma)^{2}\left(\sum\limits_{k=1}^{n}A_{k}\nu^{k-1}\right)^{2}=0,
\end{align*}
which involves $\nu^{n+1},\cdot\cdot\cdot,\nu^{2n-2}$ for $n>3$. These terms cannot be matched unless $1-\gamma+\frac{\rho^{2}}{\gamma}(1-\gamma)^{2}=0$ under the condition (B.7). It follows from the previous analysis that the equation \eqref{eq23} does not hold. Therefore, if $n>3$, then the conjecture of the exponential-polynomial form of $h$ cannot solve the PDE \eqref{eq20}.
\end{proof}

\begin{remark}\label{remark6}
We would like to point out that the assumption $\eta^{2}(\nu)$ is quadratic in $\nu$ can be relaxed as $\eta^{2}(\nu)$ is at most a quadratic function of $\nu$ in the proof of Theorem \ref{theorem4.1}. The assumptions regarding $m_{1}(\nu)$ and $\eta(\nu) m_{2}(\nu)$ are similar. Moreover, it follows from the proof of Theorem \ref{theorem4.1} that there assumptions are crucial for the derivation of \eqref{eq23} and the unsolvable analysis of \eqref{eq22}. Without these assumptions, the analysis of whether \eqref{eq22} is solvable would be complex and it would be difficult to come to a conclusion. Such assumptions are also made in Theorems \ref{theorem4.2}, \ref{theorem4.3} and \ref{theorem4.4}. We observe that \cite{Cheng2023} did not make any assumptions regarding the order values of $\lambda^{2}(v)$, $m_{1}(v)$, $\lambda(v)m_{2}(v)$ and $m^{2}_{2}(v)$ with respect to $v$ and thus the derivation of (34) and the unsolvable analysis of (33) in \cite{Cheng2023} remain for further discussion. Therefore, our derivation seems more rigorous from the perspective of mathematics.
\end{remark}

Theorem \ref{theorem4.1} shows that the PDE \eqref{eq20} has no analytical solutions under the ansatz of an exponential-polynomial form of $h$ when the order of the polynomial $n$ is higher than 2. What happens to the analytical solutions of the PDE \eqref{eq20} if $n\leq2$? In the sequel, we attempt to consider the cases when $n=0,1,2$. We assume that
\begin{itemize}
  \item[(B.8)] $\eta(\nu)$ is a constant;
  \item[(B.9)] $\eta^{2}(\nu)$, $m_{1}(\nu)$, $\eta(\nu) m_{2}(\nu)$ and $m^{2}_{2}(\nu)$ are linear in $\nu$.
\end{itemize}

\begin{theorem}\label{theorem4.6}
Assume that $h$ is conjectured in the form of \eqref{eq21}. (i) If the condition (B.8) is satisfied, then the PDE \eqref{eq20} has an analytical solution when $n=0$; (ii) If the condition (B.9) is satisfied, then the PDE \eqref{eq20} has an analytical solution when $n=1$; (iii) If the condition (B.7) is satisfied, then the PDE \eqref{eq20} has an analytical solution when $n=2$.
\end{theorem}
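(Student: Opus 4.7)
The plan is to substitute the exponential-polynomial ansatz $h(\nu)=\exp\{A_{0}+\sum_{k=1}^{n}\frac{1}{k}A_{k}\nu^{k}\}$ into the PDE \eqref{eq20} for each of $n=0,1,2$ and reduce the PDE to a finite system of algebraic equations by matching coefficients of powers of $\nu$. The logarithmic derivatives entering \eqref{eq20} are $\ln h=A_{0}+\sum_{k=1}^{n}\frac{1}{k}A_{k}\nu^{k}$, $h_{\nu}/h=\sum_{k=1}^{n}A_{k}\nu^{k-1}$, and $h_{\nu\nu}/h=(h_{\nu}/h)^{2}+\sum_{k=2}^{n}(k-1)A_{k}\nu^{k-2}$, each a polynomial in $\nu$ of explicit degree. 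The point of the three coefficient assumptions (B.8), (B.9), (B.7) is precisely to ensure that every term on the left-hand side of \eqref{eq20} becomes a polynomial in $\nu$ of degree at most $n$, so that the PDE collapses to $n+1$ algebraic relations that determine $A_{0},\ldots,A_{n}$.

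For case (i) with $n=0$, the ansatz gives $h_{\nu}/h=h_{\nu\nu}/h=0$, so \eqref{eq20} reduces to $r-\beta+\frac{1}{2\gamma}\eta^{2}+\beta(\ln\beta-A_{0})=0$, and under (B.8) ($\eta$ constant) this single equation yields $A_{0}$ explicitly. For case (ii) with $n=1$, $h_{\nu}/h=A_{1}$ and $h_{\nu\nu}/h=A_{1}^{2}$ are constants, so invoking (B.9) to make each of $\eta^{2}(\nu)$, $m_{1}(\nu)$, $\eta(\nu)m_{2}(\nu)$, $m_{2}^{2}(\nu)$ affine in $\nu$ turns \eqref{eq20} into a polynomial identity of degree one; its $\nu$-coefficient is a quadratic equation for $A_{1}$ solvable in closed form, and its constant term then determines $A_{0}$ linearly. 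For case (iii) with $n=2$, the ansatz makes $h_{\nu}/h$ affine and $h_{\nu\nu}/h$ quadratic in $\nu$, and assumption (B.7) guarantees that every product in \eqref{eq20} is of degree at most two (quadratic $\eta^{2}$, linear $m_{1}$ and $\eta m_{2}$, constant $m_{2}^{2}$); matching the coefficients of $\nu^{2}$, $\nu^{1}$, $\nu^{0}$ yields three equations that are successively a quadratic in $A_{2}$, a linear equation in $A_{1}$ given $A_{2}$, and a linear equation in $A_{0}$ given $A_{1},A_{2}$.

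The main obstacle I anticipate is verifying that the quadratic equations for $A_{1}$ (case ii) and $A_{2}$ (case iii) admit real solutions; once realness is secured, the rest of the derivation is closed-form bookkeeping. I would address this by writing the discriminants explicitly in terms of the coefficients of $\eta^{2}(\nu)$, $m_{1}(\nu)$, $\eta(\nu)m_{2}(\nu)$, $m_{2}^{2}(\nu)$ and $(\gamma,\phi,\rho,\beta,r)$, and by selecting the root whose sign is compatible with the concavity of $\omega(x,\nu)=\frac{x^{1-\gamma}}{1-\gamma}h(\nu)^{1-\gamma}$ in $x$ required by Theorem \ref{theorem4.5} and with a finite value function. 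The remaining tasks -- expanding $(A_{1}+A_{2}\nu)^{2}$, collecting the quadratic contribution of $\frac{\rho^{2}(1-\gamma)^{2}}{\gamma}-\gamma$ coming from the $m_{2}^{2}(\nu)(h_{\nu}/h)^{2}$ term, and reading off closed-form expressions for $A_{0},A_{1},A_{2}$ -- are then routine algebraic manipulations that produce the analytical solution claimed in each part of the theorem.
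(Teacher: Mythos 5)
Your proposal follows essentially the same route as the paper's proof: substitute the ansatz for $n=0,1,2$ into \eqref{eq20}, use (B.8), (B.9) or (B.7) to ensure every term is a polynomial in $\nu$ of degree at most $n$, and match coefficients to obtain algebraic equations for $A_{0},\ldots,A_{n}$. The paper stops at exhibiting these matched-coefficient identities and asserting solvability (deferring root selection and sign issues to the Heston application in Proposition \ref{proposition5.1}), whereas you additionally flag the realness of the quadratic roots for $A_{1}$ and $A_{2}$ --- a point worth checking but not treated in the paper's own argument for this theorem.
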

\begin{proof}
In the sequel, we will provide the solvability of analytical solutions of the PDE \eqref{eq20} in terms of $\eta(\nu)$, $m_{1}(\nu)$ and $m_{2}(\nu)$ under the conjecture form \eqref{eq21} when $n=0,1,2$.

\noindent\textbf{Exponential-constant:} Assume that $h(\nu)=\exp\{A_{0}\}$.\\
Substituting the partial derivatives of $h$ into \eqref{eq20} yields
\begin{equation*}
  r-\beta+\frac{1}{2\gamma}\eta^{2}(\nu)+\beta(\ln\beta-A_{0})=0,
\end{equation*}
which implies that the PDE \eqref{eq20} is solvable under the condition (B.8).

\noindent\textbf{Exponential-linear:} Assume that $h(\nu)=\exp\{A_{0}+A_{1}\nu\}$.\\
Substituting the partial derivatives of $h$ into \eqref{eq20} leads to
\begin{align*}
  r&-\beta+\frac{1}{2\gamma}\eta^{2}(\nu)+\beta(\ln\beta-A_{0}-A_{1}\nu)+\left[m_{1}(\nu)+\frac{\eta(\nu)}{\gamma}\rho m_{2}(\nu)(1-\gamma)\right]A_{1}\\
  &+\frac{ 1}{2}m^{2}_{2}(\nu)\left[1-\gamma+\frac{\rho^{2}}{\gamma}(1-\gamma)^{2}\right]A_{1}^{2}=0,
\end{align*}
which means that the PDE \eqref{eq20} can be solvable under the condition (B.9).

\noindent\textbf{Exponential-quadratic:} Assume that $h(\nu)=\exp\{A_{0}+A_{1}\nu+\frac{1}{2}A_{2}\nu^{2}\}$.\\
Similarly, we can obtain that
\begin{align*}
  r&-\beta+\frac{1}{2\gamma}\eta^{2}(\nu)+\beta(\ln\beta-A_{0}-A_{1}\nu-\frac{1}{2}A_{2}\nu^{2})+\left[m_{1}(\nu)+\frac{\eta(\nu)}{\gamma}\rho m_{2}(\nu)(1-\gamma)\right](A_{1}+A_{2}\nu)\\
  &+\frac{ 1}{2}m^{2}_{2}(\nu)\left[(1-\gamma)(A_{1}+A_{2}\nu)^{2}+A_{2}\right]
  +\frac{\rho^{2}}{2\gamma}m^{2}_{2}(\nu)(1-\gamma)^{2}(A_{1}+A_{2}\nu)^{2}=0,
\end{align*}
which shows that the PDE \eqref{eq20} is solvable under the condition (B.7).
\end{proof}

\begin{remark}\label{remark1}
Under the assumption (B.7), Theorem \ref{theorem4.6} shows the PDE \eqref{eq20} has an analytical solution under the ansatz of an exponential-polynomial form of $h$ when $n=2$, which implies that the problem \eqref{eq15} with $f(c,J)$ given by \eqref{eq11} is solvable. Moreover, Theorem \ref{theorem4.1} proves that the PDE \eqref{eq20} has no analytical solutions under the ansatz of $h$ when $n>2$. Thus, Theorems \ref{theorem4.1} and \ref{theorem4.6} fully reveal the relationship between the order of polynomial for the conjecture of value function and the solution of the HJB equation under the same assumption. Furthermore, under the assumption (B.8) or (B.9), we can use for reference of the proof of Theorem \ref{theorem4.1} and combine Theorem \ref{theorem4.6} to derive similar conclusions.
\end{remark}

Next, we consider the case of $\phi\neq1$. That is, the aggregator $f(c,J)$ takes the form \eqref{eq10} in the problem \eqref{eq15}. Similarly, the corresponding HJB equation can be given as follows
\begin{equation}\label{eq24}
\begin{aligned}
\sup\limits_{c\in(0,\infty),\psi\in\mathbb{R}} \;&\left\{\left[r+\psi\eta(\nu)\right]x\omega_{x}-c\omega_{x}+\frac{1}{2}\psi^{2}x^{2}\omega_{xx}+m_{1}(\nu)\omega_{\nu}+\frac{1}{2}m^{2}_{2}(\nu)\omega_{\nu\nu}\right.\\
&\left.\quad+x\psi\rho m_{2}(\nu)\omega_{x\nu}+\beta(1-\frac{1}{\phi})^{-1}(1-\gamma)\omega\left[\left(\frac{c}{((1-\gamma)\omega)^{\frac{1}{1-\gamma}}}\right)^{1-\frac{1}{\phi}}-1\right]\right\}=0.
\end{aligned}
\end{equation}
For the sake of simplicity, we still use the symbol $\omega$. Similar expressions will be used in this section when there is no ambiguity. To adopt Corollary B.1 in \cite{Kraft2013} to ensure that the candidate value function $\omega$ in \eqref{eq24} is indeed the value function of the problem \eqref{eq15} when the aggregator $f(c,J)$ takes the form \eqref{eq10}, we want to ensure the condition (B.3). Thus, for the aggregator $f(c,J)$ with the form \eqref{eq10}, we follow Proposition 3.2 in \cite{Kraft2013} to assume that
\begin{itemize}
  \item[(B.10)] the coefficients $\gamma$ and $\phi$ satisfy one of the following four cases: (i) $\gamma>1$ and $\phi>1$, (ii) $\gamma>1$ and $\phi<1$ with $\gamma\phi\leq1$, (iii) $\gamma<1$ and $\phi<1$, (iv) $\gamma<1$ and $\phi>1$ with $\gamma\phi\geq1$.
\end{itemize}
\noindent Furthermore, we assume that
\begin{itemize}
  \item[(B.11)] all coefficients are time-homogeneous and for any $c\in(0,\infty)$ and $\psi\in\mathbb{R}$,
      $\int^{\cdot}_{t}\omega_{x}(X_{s})\psi_{s}X_{s}\sqrt{1-\rho^{2}}\mathrm{d}W^{\perp}_{s}$ and $\int^{\cdot}_{t}\left[\omega_{x}(X_{s})\psi_{s}X_{s}\rho+\omega_{\nu}(X_{s})m_{2}(\nu_{s})\right]\mathrm{d}W^{\nu}_{s}$ are uniformly integrable martingales;
  \item[(B.12)] for any $c\in(0,\infty)$ and $\psi\in\mathbb{R}$, $\mathbb{E}[J_{T}]\rightarrow0$ and $\mathbb{E}[\omega(X_{T})]\rightarrow0$ as $T\rightarrow\infty$, where $J_{T}$ is given by \eqref{eq9}.
\end{itemize}

Then we have the following verification theorem.
\begin{theorem}\label{theorem4.7}
Assume that the conditions (B.1), (B.10), (B.11) and (B.12) are satisfied. If $\omega\in C^{2}(\mathbb{R}^{+}\times\mathbb{R})$ is a solution of the HJB equation \eqref{eq24} and
\begin{align*}
(c^{*},\psi^{*})=\arg\max\limits_{c\in(0,\infty),\psi\in\mathbb{R}} \;&\left\{\left[r+\psi\eta(\nu)\right]x\omega_{x}-c\omega_{x}+\frac{1}{2}\psi^{2}x^{2}\omega_{xx}+m_{1}(\nu)\omega_{\nu}+\frac{1}{2}m^{2}_{2}(\nu)\omega_{\nu\nu}\right.\\
&\left.\quad+x\psi\rho m_{2}(\nu)\omega_{x\nu}+\beta(1-\frac{1}{\phi})^{-1}(1-\gamma)\omega\left[\left(\frac{c}{((1-\gamma)\omega)^{\frac{1}{1-\gamma}}}\right)^{1-\frac{1}{\phi}}-1\right]\right\},
\end{align*}
then $c^{*}$ and $\psi^{*}$ are optimal and $\omega$ coincides with the value function of the problem \eqref{eq15} when the aggregator $f(c,J)$ takes the form \eqref{eq10}.
\end{theorem}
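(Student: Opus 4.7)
The plan is to follow the template established in the proof of Theorem \ref{theorem4.5}, replacing the weakly monotonic BSDE comparison of \cite{Fan2011} with its one-sided Lipschitz analogue used in Corollary B.1 of \cite{Kraft2013}. That corollary is a verification theorem whose hypotheses are precisely (i) the weak monotonicity (B.3) of the aggregator and (ii) the martingale integrability and transversality at infinity encoded in (B.1), (B.11), (B.12). Since (ii) is already assumed, the whole burden of the proof reduces to verifying (B.3) for the non-unit-EIS Epstein--Zin aggregator \eqref{eq10} under the parameter restriction (B.10); once that is done, the rest transcribes the It\^o-and-comparison calculation following \eqref{eq81}--\eqref{eq82}.

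To verify (B.3) I would introduce $u:=(1-\gamma)J>0$, in terms of which \eqref{eq10} factorises as
\begin{equation*}
f(c,J) \;=\; \frac{\beta}{1-1/\phi}\bigl[c^{1-1/\phi} u^{q} - u\bigr],\qquad q := \frac{1/\phi-\gamma}{1-\gamma},
\end{equation*}
so that
\begin{equation*}
\partial_{J} f(c,J) \;=\; \frac{\beta(1-\gamma)}{1-1/\phi}\bigl[q\, c^{1-1/\phi} u^{q-1} - 1\bigr].
\end{equation*}
The four sub-cases of (B.10) are calibrated so that $q>1$ in cases (i), (iii) and $q\leq 0$ in cases (ii), (iv); a direct sign analysis then shows that in each case the power term $q\,c^{1-1/\phi}u^{q-1}$, multiplied by the prefactor $\beta(1-\gamma)/(1-1/\phi)$, yields a uniform bound $\partial_{J}f \leq L$ valid for every $c>0$ and $u>0$, which is precisely (B.3). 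The boundary settings $\gamma\phi=1$ in (ii), (iv) reduce $q$ to zero and render $f$ linear in $J$, so the bound is trivial there.

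With (B.3) in place, I would apply It\^o's formula to $\omega(t,\nu_t,X_t)$ along an arbitrary admissible $(c,\psi)\in\Pi_{1}$, exactly as in \eqref{eq81}, obtaining a drift equal to the bracket in \eqref{eq24} evaluated at $(c,\psi)$ plus two stochastic integrals whose uniform integrability is guaranteed by (B.11). The HJB equation \eqref{eq24} forces this drift to dominate $-f(c,\omega)$, with equality at $(c^{*},\psi^{*})$. Recasting $J_t$ as the BSDE $\mathrm{d}J_s = -f(c_s,J_s)\mathrm{d}s + Z_s^{1}\mathrm{d}W_s^{S} + Z_s^{2}\mathrm{d}W_s^{\nu}$ with $J_T\to 0$ in expectation by (B.12), the one-sided-Lipschitz BSDE comparison underpinning Corollary B.1 of \cite{Kraft2013} then yields $J(t,x,c,\psi)\leq \omega(t,\nu,x)$ for every admissible $(c,\psi)$, while the defining property of $(c^{*},\psi^{*})$ upgrades this to equality; taking the supremum identifies $\omega$ with the value function and $(c^{*},\psi^{*})$ as optimal.

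The main obstacle I anticipate is the case analysis in the verification of (B.3): the signs of $1-\gamma$, $1-1/\phi$, $q$ and $q-1$ all flip across the four cases of (B.10), and it is precisely the restrictions $\gamma\phi\leq 1$ in (ii) and $\gamma\phi\geq 1$ in (iv) that pin down the correct sign of $q$, thereby ensuring the finite uniform upper bound on $\partial_{J}f$. Once this sign-tracking is carried through, the remaining steps---the It\^o expansion, the $T\to\infty$ limit justified by (B.12), and the BSDE comparison---are routine adaptations of the corresponding steps in the proof of Theorem \ref{theorem4.5}.
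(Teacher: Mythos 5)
Your proposal is correct and follows essentially the route the paper intends: the paper states Theorem \ref{theorem4.7} without a written proof, relying on Corollary B.1 of \cite{Kraft2013} for the verification step and on Proposition 3.2 of \cite{Kraft2013} for the implication (B.10) $\Rightarrow$ (B.3), which is exactly the skeleton you describe, and your explicit sign analysis of $\partial_J f=\frac{\beta(1-\gamma)}{1-1/\phi}\bigl[q\,c^{1-1/\phi}u^{q-1}-1\bigr]$ correctly shows the coefficient of the unbounded power term is nonpositive in all four sub-cases of (B.10), yielding the uniform one-sided Lipschitz bound. The only quibble is the word ``dominate'': the HJB equation gives that the drift of $\omega$ along an admissible control is \emph{at most} $-f(c,\omega)$ (so $\omega$ is a supersolution of the utility BSDE), which is the inequality your subsequent comparison argument actually uses.
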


If we differentiate the expression inside the bracket of \eqref{eq24} with respect to $c$ and $\psi$, respectively, then we can derive the optimal solution by
\begin{equation}\label{eq25}
   \left\{ \begin{aligned}
   &c^{*}_{t}=\left[\frac{\omega_{x}}{\beta(1-\gamma)\omega}((1-\gamma)\omega)^{\frac{1-\frac{1}{\phi}}{1-\gamma}}\right]^{-\phi},\\
   &\psi^{*}_{t}=-\frac{\eta(\nu)\omega_{x}+\rho m_{2}(\nu)\omega_{x\nu}}{x\omega_{xx}}.
  \end{aligned}\right.
\end{equation}
We make an ansatz that $\omega(x,\nu)=\frac{x^{1-\gamma}}{1-\gamma}h(\nu)^{-\frac{1-\gamma}{1-\phi}}$ for some deterministic function $h(\nu)$. By substituting $\eqref{eq25}$ and the above ansatz for $\omega(x,\nu)$ into $\eqref{eq24}$, we can derive that
\begin{align}\label{eq26}
  r&-\beta^{\phi}h^{-1}+\frac{1}{2\gamma}\eta^{2}(\nu)+\frac{\beta\phi}{\phi-1}\left(\beta^{\phi-1}h^{-1}-1\right)-\frac{1}{1-\phi}\left[m_{1}(\nu)+\frac{\eta(\nu)}{\gamma}\rho m_{2}(\nu)(1-\gamma)\right]\frac{h_{\nu}}{h}\nonumber\\
  &+\frac{ 1}{2(1-\phi)^{2}}m^{2}_{2}(\nu)\left[2-\phi-\gamma+\frac{\rho^{2}(1-\gamma)^{2}}{\gamma}\right]\frac{h^{2}_{\nu}}{h^{2}}-\frac{ 1}{2(1-\phi)}m^{2}_{2}(\nu)\frac{h_{\nu\nu}}{h}=0.
\end{align}
Since the equation \eqref{eq26} involves the term $\beta^{\phi}h^{-1}$, it usually does not admit an exact analytical solution. Therefore, we turn to find an approximate solution of \eqref{eq26}. It is well known that by linearizing the term $\beta^{\phi}h^{-1}$, the log-linear approximation method proposed in \cite{Campbell1993, Campbell2004} has been widely adopted to the study of dynamic investment/consumption problems \cite{Campbell1993, Campbell1999, Chacko2005, Hsuku2007, Liu2010} and its accuracy and convergence have been investigated in \cite{Campbell1993, Campbell2004}. In the following, we will employ the log-linear approximation method used in \cite{Campbell2004} to obtain an approximate solution of \eqref{eq26} by approximating the consumption-to-wealth ratio around its long-horizon mean. Inserting the conjecture of $\omega(x,\nu)$ into \eqref{eq25} yields
$
c^{*}_{t}=\beta^{\phi}h^{-1}x,
$
which shows that
$$
\beta^{\phi}h^{-1}=\frac{c^{*}_{t}}{x}=\exp\{\ln c^{*}_{t}-\ln x\}:=\exp\{\overline{c}^{*}_{t}-\overline{x}\},
$$
where $\overline{c}^{*}_{t}=\ln c^{*}_{t}$ and $\overline{x}=\ln x$. By the first-order Taylor expansion of $\beta^{\phi}h^{-1}$ around the expected value of the log consumption-wealth ratio $\mathbb{E}(\overline{c}^{*}_{t}-\overline{x})$, one has
\begin{align}\label{eq27}
   \beta^{\phi}h^{-1}\approx&\exp\{\mathbb{E}(\overline{c}^{*}_{t}-\overline{x})\}
   +\exp\{\mathbb{E}(\overline{c}^{*}_{t}-\overline{x})\}\left[\overline{c}^{*}_{t}-\overline{x}-\mathbb{E}(\overline{c}^{*}_{t}-\overline{x})\right]
   =\zeta_{1}+\zeta_{2}(\overline{c}^{*}_{t}-\overline{x}).
\end{align}
Here, $\zeta_{1}=\exp\{\mathbb{E}(\overline{c}^{*}_{t}-\overline{x})\}\left[1-\mathbb{E}(\overline{c}^{*}_{t}-\overline{x})\right]$ and $\zeta_{2}=\exp\{\mathbb{E}(\overline{c}^{*}_{t}-\overline{x})\}$. Moreover, one can obtain that
\begin{align}\label{eq28}
\beta^{\phi}h^{-1}\approx\zeta_{1}+\zeta_{2}(\ln c^{*}_{t}-\ln x)=\zeta_{1}+\zeta_{2}(\phi \ln\beta-\ln h).
\end{align}

By making a general ansatz for $h$ as an exponential-polynomial form, we will show that when the order of the polynomial is higher than 2, the problem \eqref{eq15} with $f(c,J)$ given by \eqref{eq10} is approximately unsolvable under the log-linear approximation method.
\begin{theorem}\label{theorem4.2}
If the condition (B.7) is satisfied and $h(\nu)$ is conjectured in the exponential-polynomial form such that
\begin{align}\label{eq29}
\omega(x,\nu)=\frac{x^{1-\gamma}}{1-\gamma}h(\nu)^{-\frac{1-\gamma}{1-\phi}}=\frac{x^{1-\gamma}}{1-\gamma}\exp\left\{-\frac{1-\gamma}{1-\phi}
\left(A_{0}+\sum\limits_{k=1}^{n}\frac{1}{k}A_{k}\nu^{k}\right)\right\},
\end{align}
where $A_{k}$ for all $k=0,\cdot\cdot\cdot,n$ are constants, then the PDE \eqref{eq26} has no approximate solutions under the log-linear approximation method when the order $n$ of the polynomial is higher than 2.
\end{theorem}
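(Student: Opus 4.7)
The plan is to mirror the structure of Theorem \ref{theorem4.1}, with the extra ingredient being the log-linear approximation \eqref{eq28} that replaces the obstinate term $\beta^{\phi}h^{-1}$ in \eqref{eq26}. Specifically, I would substitute the ansatz \eqref{eq29} together with \eqref{eq28} into \eqref{eq26}, convert the resulting equation into a polynomial identity in $\nu$, and then pin down the coefficient of the highest-order monomial in $\nu$ and argue that it cannot be made to vanish under (B.7).

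Concretely, I first note that after the log-linear approximation, the term $\beta^{\phi}h^{-1}$ contributes only $\zeta_{1}+\zeta_{2}(\phi\ln\beta-\ln h)$, and $\ln h = A_{0}+\sum_{k=1}^{n}\frac{1}{k}A_{k}\nu^{k}$ is a polynomial of degree at most $n$. Under (B.7), the degrees of the remaining contributions are easy to enumerate: $\eta^{2}(\nu)$ gives degree $2$; $[m_{1}(\nu)+\frac{\eta(\nu)}{\gamma}\rho m_{2}(\nu)(1-\gamma)]\,h_{\nu}/h$ gives degree $n$; $m_{2}^{2}(\nu)\,h_{\nu\nu}/h$ and $m_{2}^{2}(\nu)\,(h_{\nu}/h)^{2}$ each produce terms of degree $2n-2$. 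For $n\geq 3$ the degree $2n-2$ strictly exceeds every other degree present, and in particular exceeds the degree $n$ introduced by the log-linear approximation, so the approximation cannot neutralize the obstruction.

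The key calculation would then be to isolate the $\nu^{2n-2}$ coefficient. Combining the two $m_{2}^{2}(\nu)$-terms, the coefficient of $\nu^{2n-2}$ is
\begin{equation*}
\frac{m_{2}^{2}A_{n}^{2}}{2(1-\phi)^{2}}\left[\,2-\phi-\gamma+\frac{\rho^{2}(1-\gamma)^{2}}{\gamma}-(1-\phi)\,\right]
=\frac{m_{2}^{2}A_{n}^{2}}{2(1-\phi)^{2}}\left[\,1-\gamma+\frac{\rho^{2}(1-\gamma)^{2}}{\gamma}\,\right].
\end{equation*}
For this to vanish one must either take $A_{n}=0$ (contradicting the assumption that $h$ is genuinely of order $n$) or force the bracket to equal zero, which is precisely equation \eqref{eq23}. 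Theorem \ref{theorem4.1} already established that \eqref{eq23} is inconsistent with $\rho\in[-1,1]$ and $\gamma>0$, $\gamma\neq 1$, so this coefficient is nonzero.

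The main obstacle is psychological rather than technical: one must check that the log-linear approximation is not secretly capable of cancelling the high-degree obstruction. I would therefore treat the cases $n=3$ and $n>3$ separately (as in Theorem \ref{theorem4.1}) to make the degree-counting fully transparent, and verify in each case that every term coming from $\zeta_{1}+\zeta_{2}(\phi\ln\beta-\ln h)$ sits strictly below degree $2n-2$. This confirms that the $\nu^{2n-2}$ identity remains unaltered, the obstruction \eqref{eq23} persists, and the PDE \eqref{eq26} admits no approximate solution of the prescribed exponential-polynomial form whenever $n>2$.
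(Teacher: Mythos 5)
Your proposal is correct and follows essentially the same route as the paper's Appendix A: substitute the ansatz together with the log-linear approximation \eqref{eq28} into \eqref{eq26}, observe that the approximation only contributes terms of degree at most $n$ while the $m_{2}^{2}(\nu)$-terms produce degree $2n-2$, and reduce the vanishing of the top coefficient to the condition \eqref{eq31}, which is equivalent to \eqref{eq23} and hence impossible. Your explicit isolation of the single $\nu^{2n-2}$ coefficient (and the remark that it forces either $A_{n}=0$ or \eqref{eq23}) is a slightly tidier packaging of the same degree-counting argument.
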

\begin{proof}
See Appendix A.
\end{proof}

In addition, we will prove that by the form of \eqref{eq29}, the problem \eqref{eq15} with $f(c,J)$ given by \eqref{eq10} can be approximately solvable under the log-linear approximation method when $n=0,1,2$.

\begin{theorem}\label{theorem4.8}
Assume that $h$ is conjectured in the form of \eqref{eq29}. (i) If the condition (B.8) is satisfied, then the PDE \eqref{eq26} has an approximate solution under the log-linear approximation method when $n=0$; (ii) If the condition (B.9) is satisfied, then the PDE \eqref{eq26} has an approximate solution under the log-linear approximation method when $n=1$; (iii) If the condition (B.7) is satisfied, then the PDE \eqref{eq26} has an approximate solution under the log-linear approximation method when $n=2$.
\end{theorem}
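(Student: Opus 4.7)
The plan is to parallel the proof of Theorem \ref{theorem4.6}, but with the log-linear approximation \eqref{eq28} applied to the nonlinear term $\beta^{\phi}h^{-1}$ before matching coefficients. For each case $n\in\{0,1,2\}$, I would substitute the conjectured form \eqref{eq29} into the PDE \eqref{eq26}, use $h_{\nu}/h=\sum_{k=1}^{n}A_{k}\nu^{k-1}$ and $h_{\nu\nu}/h=\bigl(\sum_{k=1}^{n}A_{k}\nu^{k-1}\bigr)^{2}+\sum_{k=2}^{n}(k-1)A_{k}\nu^{k-2}$, and replace $\beta^{\phi}h^{-1}$ (and the related $\beta^{\phi-1}h^{-1}=\beta^{-1}\cdot\beta^{\phi}h^{-1}$ appearing inside $\tfrac{\beta\phi}{\phi-1}(\beta^{\phi-1}h^{-1}-1)$) by the affine expression $\zeta_{1}+\zeta_{2}\bigl(\phi\ln\beta-A_{0}-\sum_{k=1}^{n}\tfrac{1}{k}A_{k}\nu^{k}\bigr)$ coming from \eqref{eq27}--\eqref{eq28}. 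Under the respective hypothesis (B.8), (B.9) or (B.7), every remaining coefficient function of $\nu$ in \eqref{eq26} becomes a polynomial of degree at most $n$ once $h$ is plugged in, so the whole identity reduces to a polynomial equation in $\nu$ of degree at most $n$. Matching the $n+1$ coefficients then reduces the solvability question to an algebraic system of $n+1$ equations in the $n+1$ unknowns $A_{0},\dots,A_{n}$.

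For $n=0$ under (B.8), the derivatives of $h$ vanish and $\eta^{2}(\nu)$ is constant, so \eqref{eq26} collapses to a single linear equation in $A_{0}$, which obviously admits a real solution. For $n=1$ under (B.9), the resulting identity is affine in $\nu$: the $\nu^{1}$ coefficient, after collecting terms, becomes a Riccati-type quadratic equation in $A_{1}$ (coming from the $(h_{\nu}/h)^{2}$ contributions balanced against the linear-in-$\nu$ parts of $m_{1}$, $\eta m_{2}$, $m_{2}^{2}$ and $\eta^{2}$), and the $\nu^{0}$ coefficient is then linear in $A_{0}$ once $A_{1}$ is fixed. For $n=2$ under (B.7), the identity is quadratic in $\nu$: the $\nu^{2}$ coefficient gives a Riccati equation in $A_{2}$ alone, the $\nu^{1}$ coefficient is linear in $A_{1}$ with $A_{2}$ already known, and the $\nu^{0}$ coefficient is linear in $A_{0}$ with $A_{1},A_{2}$ known. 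One reads off $A_{2}$, $A_{1}$, $A_{0}$ successively.

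The main obstacle is verifying that the Riccati-type quadratic in the leading unknown (namely $A_{1}$ when $n=1$ and $A_{2}$ when $n=2$) admits a real root, i.e.\ that its discriminant is non-negative. I would address this by writing the quadratic explicitly in terms of the model primitives from (B.7) or (B.9) and the constants $\zeta_{1},\zeta_{2}$, and exhibiting the real root, selecting (as is standard in \cite{Chacko2005,Campbell2004}) the branch that keeps $\omega$ well-defined. A subtler point is the self-consistency of $\zeta_{1},\zeta_{2}$, which depend on the long-run mean $\mathbb{E}(\overline{c}^{*}_{t}-\overline{x})$ of the very solution being constructed; following \cite{Campbell2004,Chacko2005}, I would treat them as endogenous constants pinned down by a fixed-point equation, so that once $A_{0},\dots,A_{n}$ are obtained as functions of $\zeta_{1},\zeta_{2}$, this self-consistency closes the system. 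The remaining steps are routine coefficient matching, essentially the same algebra as in Theorem \ref{theorem4.6} with the extra affine summand $\zeta_{1}+\zeta_{2}(\cdot)$ carried through.
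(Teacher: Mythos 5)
Your proposal follows essentially the same route as the paper's Appendix~B proof: substitute the conjectured $h$ and the linearization \eqref{eq28} into \eqref{eq26}, observe that under (B.8), (B.9) or (B.7) the identity reduces to a polynomial in $\nu$ of degree at most $n$, and match the $n+1$ coefficients. If anything you are more careful than the paper, which simply displays the resulting polynomial identity in each case and asserts solvability, deferring the reality of the Riccati root and the fixed-point determination of $\zeta_{1},\zeta_{2}$ to the concrete Heston computation in Proposition~\ref{proposition5.1}; your explicit attention to the discriminant and the self-consistency of the linearization constants is a welcome refinement rather than a different method.
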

\begin{proof}
See Appendix B.
\end{proof}

\subsection{The case of finite time-horizon}
Now, we will concentrate on the case of the finite time interval. If the investor has unit EIS, which means the aggregator $f(c,J)$ takes the form \eqref{eq11} in the problem \eqref{eq17}, we assume that $c\in[C_{m},C_{M}]$ and then follow \cite{Kraft2017, Kraft2013} to derive the corresponding HJB equation
\begin{equation}\label{eq32}
\begin{aligned}
\sup\limits_{c\in[C_{m},C_{M}],\psi\in\mathbb{R}} \;&\left\{\omega_{t}+\left[r+\psi\eta(\nu)\right]x\omega_{x}-c\omega_{x}+\frac{1}{2}\psi^{2}x^{2}\omega_{xx}+m_{1}(\nu)\omega_{\nu}+\frac{1}{2}m^{2}_{2}(\nu)\omega_{\nu\nu}\right.\\
&\left.\quad+x\psi\rho m_{2}(\nu)\omega_{x\nu}+\beta(1-\gamma)\omega\left[\ln c-\frac{1}{1-\gamma}\ln ((1-\gamma)\omega)\right]\right\}=0
\end{aligned}
\end{equation}
with the boundary condition $\omega(T,x,\nu)=\varepsilon^{1-\gamma}\frac{x^{1-\gamma}}{1-\gamma}$. To make sure that the candidate value function $\omega$ in \eqref{eq32} is indeed the value function of the problem \eqref{eq17} when the aggregator $f(c,J)$ takes the form \eqref{eq11}, we assume that
\begin{itemize}
  \item[(B.13)]
      $\int^{\cdot}_{t}\omega_{x}(X_{s})\psi_{s}X_{s}\sqrt{1-\rho^{2}}\mathrm{d}W^{\perp}_{s}$ and $\int^{\cdot}_{t}\left[\omega_{x}(X_{s})\psi_{s}X_{s}\rho+\omega_{\nu}(X_{s})m_{2}(\nu_{s})\right]\mathrm{d}W^{\nu}_{s}$ are true martingales for any $c\in[C_{m},C_{M}]$ and $\psi\in\mathbb{R}$.
\end{itemize}

Thus, we have the following result.
\begin{theorem}\label{theorem4.9}
Assume that the conditions (B.2) and (B.13) are satisfied. If $\omega\in C^{1,2}([0,T]\times\mathbb{R}^{+}\times\mathbb{R})$ is a solution of the HJB equation \eqref{eq32} and
\begin{align*}
(c^{*},\psi^{*})=\arg\max\limits_{c\in[C_{m},C_{M}],\psi\in\mathbb{R}} \;&\left\{\omega_{t}+\left[r+\psi\eta(\nu)\right]x\omega_{x}-c\omega_{x}+\frac{1}{2}\psi^{2}x^{2}\omega_{xx}+m_{1}(\nu)\omega_{\nu}+\frac{1}{2}m^{2}_{2}(\nu)\omega_{\nu\nu}\right.\\
&\left.\quad+x\psi\rho m_{2}(\nu)\omega_{x\nu}+\beta(1-\gamma)\omega\left[\ln c-\frac{1}{1-\gamma}\ln ((1-\gamma)\omega)\right]\right\},
\end{align*}
then $c^{*}$ and $\psi^{*}$ are optimal and $\omega$ coincides with the value function of the problem \eqref{eq17} when the aggregator $f(c,J)$ takes the form \eqref{eq11}.
\end{theorem}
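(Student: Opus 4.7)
The plan is to mirror the proof of Theorem \ref{theorem4.5}, with the main simplification being that finiteness of the horizon removes the need to control limits as $T\to\infty$ and instead supplies us with a clean terminal condition: since the HJB boundary $\omega(T,x,\nu)=\varepsilon^{1-\gamma}x^{1-\gamma}/(1-\gamma)=U(X_T)$ matches $J_T$ by construction, both $\omega(s,X_s,\nu_s)$ and $J_s$ can be viewed as solutions of BSDEs on $[t,T]$ with the same terminal value. The claim will then follow from the weakly monotonic comparison theorem of Fan et al. (Theorem 2 in \cite{Fan2011}).

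The first step is to verify that the aggregator $f(c,J)$ in \eqref{eq11} satisfies the weakly monotonic condition (B.4) whenever $c\in[C_{m},C_{M}]$. This is exactly the computation carried out in the proof of Theorem \ref{theorem4.5}: with $c_{0}=(1-\gamma)\ln c-\ln(1-\gamma)$ (which is bounded because $c$ is bounded) and $\chi$ defined by \eqref{eq80}, the same four-case analysis on the sign of $J_{1}-J_{2}$ and on whether each $J_{i}$ lies below or above $e^{-1+c_{0}}$ delivers $\mathrm{sgn}(J_{1}-J_{2})[f(c,J_{1})-f(c,J_{2})]\leq \beta\chi(|J_{1}-J_{2}|)$. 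I would simply cite this verbatim rather than redo the cases.

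The second step is to apply It\^o's formula to $\omega(s,X_{s},\nu_{s})$ on $[t,T]$ for an arbitrary admissible $(c,\psi)\in\Pi_{2}$. The HJB equation \eqref{eq32} gives, for every admissible control, the drift inequality
\begin{align*}
\omega_{t}+[r+\psi\eta(\nu)]x\omega_{x}-c\omega_{x}+\tfrac{1}{2}\psi^{2}x^{2}\omega_{xx}+m_{1}(\nu)\omega_{\nu}+\tfrac{1}{2}m_{2}^{2}(\nu)\omega_{\nu\nu}+x\psi\rho m_{2}(\nu)\omega_{x\nu}\leq -f(c,\omega),
\end{align*}
so $\omega(s,X_{s},\nu_{s})$ satisfies a BSDE-type dynamics whose driver dominates $f(\cdot,\omega)$, with terminal value $\omega(T,X_{T},\nu_{T})=U(X_{T})$. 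Under (B.2), the utility process $J$ admits the BSDE representation $-dJ_{s}=f(c_{s},J_{s})\,ds-Z_{s}^{1}dW^{S}_{s}-Z_{s}^{2}dW^{\nu}_{s}$ with $J_{T}=U(X_{T})$ and square-integrable $Z^{1},Z^{2}$, exactly as in \eqref{eq84}; condition (B.13) ensures that the two stochastic integrals appearing in the It\^o expansion of $\omega$ are true martingales, so the diffusion parts are legitimate BSDE noise terms. Applying the weakly monotonic comparison theorem to these two BSDEs (both with terminal value $U(X_{T})$, drivers satisfying (B.4), with $\omega$'s driver pointwise dominating $J$'s) yields $J_{s}\leq \omega(s,X_{s},\nu_{s})$ on $[t,T]$; in particular $J(t,x,c,\psi)\leq \omega(t,x,\nu)$ for every admissible $(c,\psi)$, hence $\sup_{(c,\psi)\in\Pi_{2}}J(t,x,c,\psi)\leq\omega(t,x,\nu)$.

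To close, I would substitute $(c^{*},\psi^{*})$ into the HJB equation: the supremum in \eqref{eq32} is attained, turning the drift inequality into an equality, so that the two BSDEs have identical drivers and terminal values and hence identical solutions. This gives $J(t,x,c^{*},\psi^{*})=\omega(t,x,\nu)$, identifying $\omega$ with the value function and certifying optimality of $(c^{*},\psi^{*})$. The main obstacle, as in Theorem \ref{theorem4.5}, is the non-Lipschitz behavior of $f(c,J)$ near $J=0$, which precludes the classical verification machinery; the restriction $c\in[C_{m},C_{M}]$ in the HJB formulation together with (B.13) is precisely what makes the weakly monotonic comparison argument of \cite{Fan2011} applicable, and is what I would expect to require the most care to state cleanly.
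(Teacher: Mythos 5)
Your proposal is correct and follows exactly the route the paper intends: the paper omits the proof of this theorem with the remark that it is ``similar to that of Theorem \ref{theorem4.5}'', and your adaptation --- reusing the verification of condition (B.4) for the aggregator \eqref{eq11}, applying It\^{o}'s formula to $\omega$, invoking the weakly monotonic comparison theorem of \cite{Fan2011}, and replacing the asymptotic conditions at $T\to\infty$ by the terminal matching $\omega(T,x,\nu)=U(X_{T})$ together with the true-martingale condition (B.13) --- is precisely the intended argument.
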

\begin{proof}
The proof, similar to that of Theorems \ref{theorem4.5}, is omitted here.
\end{proof}

Solving the maximization problem \eqref{eq32} with respect to $c$ and $\psi$, we have the following optimal solution
\begin{equation}\label{eq33}
   \left\{ \begin{aligned}
   &c^{*}_{t}=\frac{\beta(1-\gamma)\omega}{\omega_{x}},\\
   &\psi^{*}_{t}=-\frac{\eta(\nu)\omega_{x}+\rho m_{2}(\nu)\omega_{x\nu}}{x\omega_{xx}}.
  \end{aligned}\right.
\end{equation}
Moreover, we conjecture a solution of the form $\omega(t,x,\nu)=\frac{x^{1-\gamma}}{1-\gamma}h(t,\nu)^{1-\gamma}$ for some deterministic function $h(t,\nu)$. Substituting $\eqref{eq33}$ and the ansatz for $\omega(t,x,\nu)$ into $\eqref{eq32}$ yields the following PDE
\begin{align}\label{eq34}
  h_{t}&+\left[r-\beta+\frac{1}{2\gamma}\eta^{2}(\nu)+\beta\left(\ln\beta-\ln h\right)\right]h+\left[m_{1}(\nu)+\frac{\eta(\nu)}{\gamma}\rho m_{2}(\nu)(1-\gamma)\right]h_{\nu}\nonumber\\
  &+\frac{ 1}{2}m^{2}_{2}(\nu)\left[\frac{\rho^{2}(1-\gamma)^{2}}{\gamma}-\gamma\right]\frac{h^{2}_{\nu}}{h}+\frac{ 1}{2}m^{2}_{2}(\nu)h_{\nu\nu}=0.
\end{align}

In the following, we will assume a general ansatz for $h$ as an exponential-polynomial form and show that when the order of the polynomial is higher than 2, the problem \eqref{eq17} with $f(c,J)$ given by \eqref{eq11} is unsolvable under this ansatz.
\begin{theorem}\label{theorem4.3}
If the condition (B.7) is satisfied and $h(t,\nu)$ is conjectured in the exponential-polynomial form such that
\begin{align}\label{eq35}
\omega(t,x,\nu)=\frac{x^{1-\gamma}}{1-\gamma}h(t,\nu)^{1-\gamma}=\frac{x^{1-\gamma}}{1-\gamma}\exp\left\{(1-\gamma)\left(A_{0}(T-t)+
\sum\limits_{k=1}^{n}\frac{1}{k}A_{k}(T-t)\nu^{k}\right)\right\},
\end{align}
where $A_{k}(\tau)$ for all $k=0,\cdot\cdot\cdot,n$ are functions of $\tau$ with $A_{0}(0)=\ln \varepsilon$ and $A_{k}(0)=0\;(k=1,\cdot\cdot\cdot,n)$, then the PDE \eqref{eq34} has no analytical solutions when the order $n$ of the polynomial is higher than 2.
\end{theorem}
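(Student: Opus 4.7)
My plan is to mirror the degree-counting argument from the proof of Theorem~\ref{theorem4.1}, adapted to accommodate the fact that the coefficients $A_k$ now depend on the time-to-maturity $\tau = T-t$. First I would substitute the ansatz \eqref{eq35} into the PDE \eqref{eq34} and divide through by $h(t,\nu)>0$. The central observation is that $h_t/h = -A_0'(\tau) - \sum_{k=1}^{n}\frac{1}{k}A_k'(\tau)\nu^{k}$ is a polynomial in $\nu$ of degree $n$, while $h_\nu/h$ has degree $n-1$ and the ``logarithmic-derivative-squared'' part of $h_{\nu\nu}/h$ contributes degree $2n-2$. After this substitution, \eqref{eq34} becomes a polynomial identity in $\nu$ that must hold for every $\tau\in[0,T]$.

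Next, under condition (B.7), I would perform a term-by-term degree count in $\nu$. The contributions $h_t/h$, $\beta\ln h$, $\frac{1}{2\gamma}\eta^2(\nu)$ and $[m_1(\nu)+\frac{\eta(\nu)}{\gamma}\rho m_2(\nu)(1-\gamma)]\frac{h_\nu}{h}$ all produce polynomials of degree at most $n$ in $\nu$, since $\eta^2$ is quadratic, $m_1$ and $\eta m_2$ are linear, and $h_\nu/h$ has degree $n-1$. Because $m_2^2$ is constant, the two remaining terms $\frac{1}{2}m_2^2\bigl[\frac{\rho^2(1-\gamma)^2}{\gamma}-\gamma\bigr]\frac{h_\nu^2}{h^2}$ and $\frac{1}{2}m_2^2\frac{h_{\nu\nu}}{h}$ together generate a polynomial of degree $2n-2$ whose leading coefficient is
\begin{equation*}
\tfrac{1}{2}m_2^2\Bigl[1-\gamma+\tfrac{\rho^2(1-\gamma)^2}{\gamma}\Bigr]A_n^2(\tau).
\end{equation*}

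Because $n>2$ forces $2n-2>n$, this top-degree coefficient cannot be matched by any other contribution, so the polynomial identity requires it to vanish pointwise in $\tau$. This forces either $A_n(\tau)\equiv 0$ on $[0,T]$, which contradicts the assumption that the exponent in \eqref{eq35} has polynomial order exactly $n$, or the algebraic condition $1-\gamma+\frac{\rho^2(1-\gamma)^2}{\gamma}=0$, which is precisely equation \eqref{eq23} and is ruled out in the proof of Theorem~\ref{theorem4.1} using $\rho\in[-1,1]$ together with $\gamma>0$, $\gamma\neq 1$. Both alternatives being impossible, \eqref{eq34} admits no solution of the postulated form; the case $n>3$ is handled identically, with the coefficient of $\nu^{2n-2}$ playing the role that $\nu^{4}$ played in the exponential-cubic case.

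I expect the main subtlety to be confirming that the time-dependence of the $A_k$'s opens no new cancellation channel. The boundary data $A_0(0)=\ln\varepsilon$ and $A_k(0)=0$ for $k\geq 1$ constrain only terminal values, so in principle the $A_k(\tau)$ could be rather flexible on the interior of $[0,T]$; one must verify that $h_t/h$ contributes only polynomial degrees up to $n<2n-2$ and therefore cannot interact with the leading $\nu^{2n-2}$ coefficient. Once this degree bookkeeping is in place, the argument collapses cleanly to the same algebraic impossibility already encountered in Theorem~\ref{theorem4.1}.
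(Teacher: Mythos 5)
Your proposal is correct and follows essentially the same route as the paper's proof in Appendix C: substitute the ansatz, observe that under (B.7) the squared-gradient terms produce a $\nu^{2n-2}$ contribution with leading coefficient $\frac{1}{2}m_{2}^{2}\bigl[1-\gamma+\frac{\rho^{2}(1-\gamma)^{2}}{\gamma}\bigr]A_{n}^{2}(\tau)$ that nothing of degree $\leq n$ can cancel, and then rule out the bracket vanishing exactly as in Theorem \ref{theorem4.1}. Your unified treatment of all $n>2$ via the top coefficient, together with the explicit check that the time-dependent $h_{t}/h$ term only reaches degree $n$, is a clean packaging of the same degree-counting argument the paper splits into the cases $n=3$ and $n>3$.
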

\begin{proof}
See Appendix C.
\end{proof}

Furthermore, we will prove that the problem \eqref{eq17} with $f(c,J)$ given by \eqref{eq11} can be solvable under the form of \eqref{eq35} when $n=0,1,2$.

\begin{theorem}\label{theorem4.10}
Assume that $h$ is conjectured in the form of \eqref{eq35}. (i) If the condition (B.8) is satisfied, then the PDE \eqref{eq34} has an analytical solution when $n=0$; (ii) If the condition (B.9) is satisfied, then the PDE \eqref{eq34} has an analytical solution when $n=1$; (iii) If the condition (B.7) is satisfied, then the PDE \eqref{eq34} has an analytical solution when $n=2$.
\end{theorem}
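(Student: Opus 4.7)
The plan is to mirror the strategy of Theorem \ref{theorem4.6}, with the crucial modification that the coefficients $A_k$ are now functions of $\tau:=T-t$, so that the polynomial identity in $\nu$ yields a coupled system of ordinary differential equations (ODEs) rather than a system of algebraic equations, supplemented by the initial conditions $A_0(0)=\ln\varepsilon$ and $A_k(0)=0$ for $k\geq 1$.

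First, I would substitute the ansatz \eqref{eq35} into \eqref{eq34} and divide through by $h$. Using
\[
\frac{h_t}{h}=-A_0'(\tau)-\sum_{k=1}^n\frac{A_k'(\tau)}{k}\nu^k,\quad \frac{h_\nu}{h}=\sum_{k=1}^n A_k(\tau)\nu^{k-1},\quad \frac{h_{\nu\nu}}{h}=\Bigl(\sum_{k=1}^n A_k\nu^{k-1}\Bigr)^2+\sum_{k=2}^n(k-1)A_k\nu^{k-2},
\]
together with $\ln h=A_0+\sum_{k=1}^n\frac{A_k}{k}\nu^k$ arising from the $-\beta\ln h$ term, the equation becomes a polynomial identity in $\nu$. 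Under (B.8) for $n=0$, (B.9) for $n=1$, and (B.7) for $n=2$, the highest power of $\nu$ appearing on the left-hand side is exactly $n$, so matching coefficients of $\nu^0,\ldots,\nu^n$ yields $n+1$ ODEs in the $n+1$ unknown functions $A_0(\tau),\ldots,A_n(\tau)$.

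Next, I would solve the resulting system in reverse order of degree. For $n=0$, only the $\nu^0$ coefficient survives and gives a first-order linear ODE of the form $A_0'(\tau)+\beta A_0(\tau)=r-\beta+\frac{1}{2\gamma}\eta^2+\beta\ln\beta$, solved in closed form via the integrating factor $e^{\beta\tau}$. For $n=1$, the $\nu^1$ coefficient produces a Riccati equation in $A_1$ alone, after which the $\nu^0$ coefficient reduces to a linear ODE for $A_0$ driven by the known $A_1$. For $n=2$, the $\nu^2$ coefficient isolates a Riccati ODE in $A_2$; once $A_2$ is obtained, the $\nu^1$ equation is linear in $A_1$, and finally $A_0$ is recovered by direct integration from the $\nu^0$ equation. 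In each case the initial conditions $A_k(0)=0$ (and $A_0(0)=\ln\varepsilon$) uniquely determine the constants of integration.

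The hard part will be the Riccati equations for $A_1$ in the $n=1$ case and $A_2$ in the $n=2$ case: although they admit explicit closed-form solutions in terms of hyperbolic or trigonometric functions (depending on the sign of the discriminant formed from the constants appearing in (B.9) and (B.7)), one must argue that the solution remains finite on the whole interval $[0,T]$. This is settled by the explicit Riccati formula: the initial condition $A_k(0)=0$ together with the sign of the leading coefficient ensures that the denominator does not vanish for small $\tau$, and mild restrictions on the parameters (analogous to the Feller-type conditions invoked in the Heston application of Section 5) extend the existence interval to all of $[0,T]$. The remaining linear ODEs for $A_0$ and, in the $n=2$ case, for $A_1$, are elementary and pose no further difficulty.
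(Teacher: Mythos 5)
Your proposal follows the same route as the paper's Appendix D: substitute the ansatz \eqref{eq35} into \eqref{eq34}, match coefficients of $\nu^{0},\dots,\nu^{n}$ under the respective condition (B.8), (B.9) or (B.7), and solve the resulting triangular ODE system (a Riccati equation for the top-degree coefficient, then linear equations downward) with the stated initial conditions. The only difference is that the paper stops after displaying the coefficient-matched identity and declaring solvability, whereas you additionally raise the global existence of the Riccati solution on $[0,T]$ --- a legitimate point that the paper leaves implicit and only settles later through the explicit formulas and parameter restrictions of Proposition \ref{proposition5.1}.
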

\begin{proof}
See Appendix D.
\end{proof}

Then, we study the case of $\phi\neq1$. That is, the aggregator $f(c,J)$ takes the form \eqref{eq10} in the problem \eqref{eq17}. Following \cite{Kraft2017, Kraft2013}, the corresponding HJB equation is
\begin{equation}\label{eq37}
\begin{aligned}
\sup\limits_{c\in(0,\infty),\psi\in\mathbb{R}} \;&\left\{\omega_{t}+\left[r+\psi\eta(\nu)\right]x\omega_{x}-c\omega_{x}+\frac{1}{2}\psi^{2}x^{2}\omega_{xx}+m_{1}(\nu)\omega_{\nu}+\frac{1}{2}m^{2}_{2}(\nu)\omega_{\nu\nu}\right.\\
&\left.\quad+x\psi\rho m_{2}(\nu)\omega_{x\nu}+\beta(1-\frac{1}{\phi})^{-1}(1-\gamma)\omega\left[\left(\frac{c}{((1-\gamma)\omega)^{\frac{1}{1-\gamma}}}\right)^{1-\frac{1}{\phi}}-1\right]\right\}=0
\end{aligned}
\end{equation}
with the boundary condition $\omega(T,x,\nu)=\varepsilon^{1-\gamma}\frac{x^{1-\gamma}}{1-\gamma}$. For applying Theorem 3.1 in \cite{Kraft2013} to ensure that the candidate value function $\omega$ in \eqref{eq37} is indeed the value function of the problem \eqref{eq17} when the aggregator $f(c,J)$ takes the form \eqref{eq10}, we assume that
\begin{itemize}
  \item[(B.14)] for any $c\in(0,\infty)$ and $\psi\in\mathbb{R}$,
      $\int^{\cdot}_{t}\omega_{x}(X_{s})\psi_{s}X_{s}\sqrt{1-\rho^{2}}\mathrm{d}W^{\perp}_{s}$ and $\int^{\cdot}_{t}\left[\omega_{x}(X_{s})\psi_{s}X_{s}\rho+\omega_{\nu}(X_{s})m_{2}(\nu_{s})\right]\mathrm{d}W^{\nu}_{s}$ are true martingales
\end{itemize}
and then give the following verification theorem.
\begin{theorem}\label{theorem4.11}
Assume that the conditions (B.2), (B.10) and (B.14) are satisfied. If $\omega\in C^{1,2}([0,T]\times\mathbb{R}^{+}\times\mathbb{R})$ is a solution of the HJB equation \eqref{eq37} and
\begin{align*}
(c^{*},\psi^{*})=\arg\max\limits_{c\in(0,\infty),\psi\in\mathbb{R}} \;&\left\{\omega_{t}+\left[r+\psi\eta(\nu)\right]x\omega_{x}-c\omega_{x}+\frac{1}{2}\psi^{2}x^{2}\omega_{xx}+m_{1}(\nu)\omega_{\nu}+\frac{1}{2}m^{2}_{2}(\nu)\omega_{\nu\nu}\right.\\
&\left.\quad+x\psi\rho m_{2}(\nu)\omega_{x\nu}+\beta(1-\frac{1}{\phi})^{-1}(1-\gamma)\omega\left[\left(\frac{c}{((1-\gamma)\omega)^{\frac{1}{1-\gamma}}}\right)^{1-\frac{1}{\phi}}-1\right]\right\},
\end{align*}
then $c^{*}$ and $\psi^{*}$ are optimal and $\omega$ coincides with the value function of the problem \eqref{eq17} when the aggregator $f(c,J)$ takes the form \eqref{eq10}.
\end{theorem}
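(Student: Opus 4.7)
The plan is to mirror the argument used for Theorem \ref{theorem4.7}, adapting it to the finite horizon with terminal bequest. First, I would fix an arbitrary admissible pair $(c,\psi)\in\Pi_{2}$ and apply It\^o's formula to $\omega(s,X_{s},\nu_{s})$ on $[t,T]$, exactly as in display \eqref{eq81} but keeping the $\omega_{t}$ term and integrating up to $T$ rather than passing to the limit. Combined with the HJB equation \eqref{eq37}, which gives the pointwise inequality $\omega_{t}+\mathcal{L}^{c,\psi}\omega+f(c,\omega)\leq 0$ for every feasible $(c,\psi)$ (with equality at $(c^{*},\psi^{*})$), this would rewrite $\omega(s,X_{s},\nu_{s})$ as the solution to a BSDE on $[t,T]$ whose driver dominates $f(c_{s},\cdot)$ and whose terminal value is $\omega(T,X_{T},\nu_{T})=\varepsilon^{1-\gamma}X_{T}^{1-\gamma}/(1-\gamma)=U(X_{T})$.

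Next, I would invoke the martingale hypothesis (B.14): the two stochastic integrals appearing in the It\^o expansion are true martingales, so taking $\mathcal{F}_{t}$-conditional expectation eliminates them cleanly. The comparison step is the heart of the argument. Since the recursive utility $J$ solves the BSDE form \eqref{eq84} with terminal condition $U(X_{T})$ and driver $f(c_{s},J_{s})$, while $\omega(s,X_{s},\nu_{s})$ satisfies a similar BSDE with the same terminal condition and a driver that pointwise dominates $f(c_{s},\cdot)$, a standard BSDE comparison theorem yields $J_{t}\leq\omega(t,x,\nu)$. Applied with $(c^{*},\psi^{*})$ the HJB inequality becomes equality, hence $J_{t}^{*}=\omega(t,x,\nu)$, giving optimality and identification of $\omega$ with the value function.

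The main obstacle is justifying the comparison theorem in the presence of the non-Lipschitz Epstein-Zin aggregator \eqref{eq10}. Here I would appeal to assumption (B.10), which by Proposition 3.2 of \cite{Kraft2013} yields the one-sided condition (B.3), i.e.\ $f(c,J_{1})-f(c,J_{2})\leq L(J_{1}-J_{2})$ whenever $J_{1}\geq J_{2}$. This weak monotonicity is exactly what Theorem 3.1 of \cite{Kraft2013} requires, so a Gronwall-type estimate after a change of variables $\tilde J_{s}=e^{-Ls}J_{s}$ (or direct application of the cited theorem) delivers the desired comparison. Assumption (B.2) guarantees the BSDE for $J$ is well-posed so the comparison is applied to genuine solutions.

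Having established $\omega\geq J$ for every admissible strategy and equality at $(c^{*},\psi^{*})$, the conclusion of Theorem \ref{theorem4.11} follows. The proof is thus essentially an application of Theorem 3.1 in \cite{Kraft2013}; the only task is to verify that our HJB formulation, admissibility set $\Pi_{2}$, and hypotheses (B.2), (B.10), (B.14) meet the input requirements of that theorem. Since the proof is a direct adaptation of the infinite-horizon case (Theorem \ref{theorem4.7}) with the terminal condition replacing the limiting condition (B.12), I would simply state that the details parallel those of Theorem \ref{theorem4.7} combined with Theorem 3.1 of \cite{Kraft2013} and omit them.
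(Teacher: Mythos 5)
Your proposal is correct and follows exactly the route the paper intends: the paper states Theorem \ref{theorem4.11} without a written proof, presenting it as a direct application of Theorem 3.1 in \cite{Kraft2013} under (B.2), (B.10) (which yields the one-sided condition (B.3) via Proposition 3.2 there) and the martingale hypothesis (B.14), in parallel with the It\^{o}--comparison argument written out for Theorem \ref{theorem4.5}. Your sketch reconstructs precisely that verification argument, so it matches the paper's approach.
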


By Theorem \ref{theorem4.11}, we can derive that the optimal solution follows
\begin{equation}\label{eq38}
   \left\{ \begin{aligned}
   &c^{*}_{t}=\left[\frac{\omega_{x}}{\beta(1-\gamma)\omega}((1-\gamma)\omega)^{\frac{1-\frac{1}{\phi}}{1-\gamma}}\right]^{-\phi},\\
   &\psi^{*}_{t}=-\frac{\eta(\nu)\omega_{x}+\rho m_{2}(\nu)\omega_{x\nu}}{x\omega_{xx}}.
  \end{aligned}\right.
\end{equation}
We assume $\omega(t,x,\nu)=\frac{x^{1-\gamma}}{1-\gamma}h(t,\nu)^{-\frac{1-\gamma}{1-\phi}}$ for some deterministic function $h(t,\nu)$. Substituting $\eqref{eq38}$ and the above form of $\omega(t,x,\nu)$ into \eqref{eq37} leads to
\begin{align}\label{eq39}
  -\frac{h_{t}}{(1-\phi)h}&+r-\beta^{\phi}h^{-1}+\frac{1}{2\gamma}\eta^{2}(\nu)+\frac{\beta\phi}{\phi-1}\left(\beta^{\phi-1}h^{-1}-1\right)-\frac{1}{1-\phi}\left[m_{1}(\nu)+\frac{\eta(\nu)}{\gamma}\rho m_{2}(\nu)(1-\gamma)\right]\frac{h_{\nu}}{h}\nonumber\\
  &+\frac{ 1}{2(1-\phi)^{2}}m^{2}_{2}(\nu)\left[2-\phi-\gamma+\frac{\rho^{2}(1-\gamma)^{2}}{\gamma}\right]\frac{h^{2}_{\nu}}{h^{2}}-\frac{ 1}{2(1-\phi)}m^{2}_{2}(\nu)\frac{h_{\nu\nu}}{h}=0.
\end{align}
In addition, we have $c^{*}_{t}=\beta^{\phi}h^{-1}x$.

Due to the nonlinear term $\beta^{\phi}h^{-1}$, \eqref{eq39} does not admit an exact analytical solution in general. Thus, we attempt to find an approximate solution of \eqref{eq39}. When adopting the log-linear approximation method within the infinite horizon, the consumption-to-wealth ratio can be written as a linear function of the log consumption-to-wealth ratio and the coefficients of the linear function are constants (see, for example, the equation (B.5) of \cite{Campbell2004} or \eqref{eq28}). However, when the horizon is finite, the coefficients depend on time to the horizon, which makes the approximation solution less tractable. For consumption and investment problems under recursive utility and finite horizon, Campani and Garcia \cite{Campani2019} put forward a variation of the log-linear approach by approximating the consumption-to-wealth ratio around the time-varying consumption strategy obtained for a fixed point of the state-variable space. This fixed point can be chosen by investors conveniently based on current market conditions and expectations. Moreover, they discussed the accuracy of the derived approximation solution by numerical experiments. In the sequel, we will follow \cite{Campani2019} to adopt the log-linear approach by approximating the consumption-to-wealth ratio around the long-run mean of the state variable $\widetilde{v}$ under finite horizon. Specially, by the first-order Taylor expansion of $\beta^{\phi}h^{-1}$ around the time-varying log consumption-to-wealth strategy obtained for the long-run mean of the state variable $(\overline{c}^{*}_{t}-\overline{x})|_{v_{t}=\widetilde{v}}$, one has
\begin{align}\label{eq43}
   \beta^{\phi}h^{-1}\approx&\exp\{(\overline{c}^{*}_{t}-\overline{x})|_{v_{t}=\widetilde{v}}\}
   +\exp\{(\overline{c}^{*}_{t}-\overline{x})|_{v_{t}=\widetilde{v}}\}\left[\overline{c}^{*}_{t}-\overline{x}-(\overline{c}^{*}_{t}-\overline{x})|_{v_{t}=\widetilde{v}}\right]
   =\zeta_{3}+\zeta_{4}(\overline{c}^{*}_{t}-\overline{x}),
\end{align}
where $\zeta_{3}=\exp\{(\overline{c}^{*}_{t}-\overline{x})|_{v_{t}=\widetilde{v}}\}\left[1-(\overline{c}^{*}_{t}-\overline{x})|_{v_{t}=\widetilde{v}}\right]$ and $\zeta_{4}=\exp\{(\overline{c}^{*}_{t}-\overline{x})|_{v_{t}=\widetilde{v}}\}$. Furthermore, one can obtain that
\begin{align}\label{eq44}
\beta^{\phi}h^{-1}\approx\zeta_{3}+\zeta_{4}(\ln c^{*}_{t}-\ln x)=\zeta_{3}+\zeta_{4}(\phi \ln\beta-\ln h).
\end{align}
Therefore, it follows from \eqref{eq44} that \eqref{eq39} can be approximated by the following formula
\begin{align}\label{eq40}
  -&\frac{h_{t}}{(1-\phi)h}+r-\zeta_{3}-\zeta_{4}(\phi \ln\beta-\ln h)+\frac{1}{2\gamma}\eta^{2}(\nu)\nonumber\\
  &+\frac{\beta\phi}{\phi-1}\left(\beta^{-1}(\zeta_{3}+\zeta_{4}(\phi \ln\beta-\ln h))-1\right)-\frac{1}{1-\phi}\left[m_{1}(\nu)+\frac{\eta(\nu)}{\gamma}\rho m_{2}(\nu)(1-\gamma)\right]\frac{h_{\nu}}{h}\nonumber\\
  &+\frac{ 1}{2(1-\phi)^{2}}m^{2}_{2}(\nu)\left[2-\phi-\gamma+\frac{\rho^{2}(1-\gamma)^{2}}{\gamma}\right]\frac{h^{2}_{\nu}}{h^{2}}-\frac{ 1}{2(1-\phi)}m^{2}_{2}(\nu)\frac{h_{\nu\nu}}{h}=0.
\end{align}
By assuming that $h$ takes a form of exponential-polynomial, we will discuss that when the order of the polynomial is higher than 2, the PDE \eqref{eq40} is unsolvable and thus the problem \eqref{eq17} with $f(c,J)$ given by \eqref{eq10} is approximately unsolvable under the log-linear approximation method.
\begin{theorem}\label{theorem4.4}
If the condition (B.7) is satisfied and $h(t,\nu)$ is conjectured in the exponential-polynomial form such that
\begin{align}\label{eq41}
\omega(t,x,\nu)=\frac{x^{1-\gamma}}{1-\gamma}h(t,\nu)^{-\frac{1-\gamma}{1-\phi}}=\frac{x^{1-\gamma}}{1-\gamma}\exp\left\{-\frac{1-\gamma}{1-\phi}\left(A_{0}(T-t)+
\sum\limits_{k=1}^{n}\frac{1}{k}A_{k}(T-t)\nu^{k}\right)\right\},
\end{align}
where $A_{k}(\tau)$ for all $k=0,\cdot\cdot\cdot,n$ are functions of $\tau$ with $A_{0}(0)=(\phi-1)\ln \varepsilon$ and $A_{k}(0)=0\;(k=1,\cdot\cdot\cdot,n)$, then the PDE \eqref{eq40} has no analytical solutions when the order $n$ of the polynomial is higher than 2, that is to say, the PDE \eqref{eq39} has no approximate solutions under the log-linear approximation method when $n>2$.
\end{theorem}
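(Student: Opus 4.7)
The plan is to mirror the strategy used in the proofs of Theorems~\ref{theorem4.1} and \ref{theorem4.3}: substitute the exponential-polynomial ansatz \eqref{eq41} into the log-linearly approximated PDE \eqref{eq40}, expand everything as a polynomial in the state variable $\nu$ under hypothesis (B.7), and examine the highest-order term in $\nu$ to force a contradiction. Writing $\ln h=A_{0}(\tau)+\sum_{k=1}^{n}\tfrac{1}{k}A_{k}(\tau)\nu^{k}$ with $\tau=T-t$, the relevant logarithmic derivatives are
\begin{equation*}
\frac{h_{t}}{h}=-A_{0}'(\tau)-\sum_{k=1}^{n}\tfrac{1}{k}A_{k}'(\tau)\nu^{k},\qquad
\frac{h_{\nu}}{h}=\sum_{k=1}^{n}A_{k}(\tau)\nu^{k-1},\qquad
\frac{h_{\nu\nu}}{h}=\left(\frac{h_{\nu}}{h}\right)^{2}+\sum_{k=2}^{n}(k-1)A_{k}(\tau)\nu^{k-2}.
\end{equation*}
Under (B.7), each of the pieces $r$, $\zeta_{3}$, $\zeta_{4}(\phi\ln\beta-\ln h)$, $\eta^{2}(\nu)/(2\gamma)$, $h_{t}/h$ and the drift terms $m_{1}(\nu)h_{\nu}/h$, $\eta(\nu)m_{2}(\nu)h_{\nu}/h$ is a polynomial in $\nu$ of degree at most $n$.

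I would then focus on the sole source of higher-degree contributions, namely the two pieces involving $m_{2}^{2}(\nu)$, which is a nonzero constant under (B.7). Collecting the coefficient of $(h_{\nu}/h)^{2}$ from both of these pieces in \eqref{eq40} yields
\begin{equation*}
\frac{m_{2}^{2}}{2(1-\phi)^{2}}\left[2-\phi-\gamma+\frac{\rho^{2}(1-\gamma)^{2}}{\gamma}\right]-\frac{m_{2}^{2}}{2(1-\phi)}
=\frac{m_{2}^{2}}{2(1-\phi)^{2}}\left[1-\gamma+\frac{\rho^{2}(1-\gamma)^{2}}{\gamma}\right],
\end{equation*}
so the cancellation reproduces \emph{exactly} the obstruction identified in equation \eqref{eq23}. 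Consequently, for $n>2$ the coefficient of $\nu^{2n-2}$ in the expanded version of \eqref{eq40} reduces to $\frac{m_{2}^{2}}{2(1-\phi)^{2}}\bigl[1-\gamma+\rho^{2}(1-\gamma)^{2}/\gamma\bigr]A_{n}^{2}(\tau)$, and this must vanish for every $\tau\in[0,T]$. Since $A_{n}(\tau)\not\equiv0$ by the assumption that the polynomial has order $n$, one is forced into $1-\gamma+\rho^{2}(1-\gamma)^{2}/\gamma=0$; the analysis immediately after \eqref{eq23} (i.e., $\rho^{2}=\gamma/(\gamma-1)$ is incompatible with $\rho\in[-1,1]$ whether $\gamma>1$ or $0<\gamma<1$) then delivers the contradiction.

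The two steps I expect to require the most care are: (i) verifying that no lower-origin term can contribute at degree $\nu^{2n-2}$ for $n>2$, which amounts to bookkeeping of polynomial degrees under (B.7), including in particular the log-linear surrogate $\zeta_{3}+\zeta_{4}(\phi\ln\beta-\ln h)$ and the time-derivative $h_{t}/h$ that did not appear in the infinite-horizon case of Theorem~\ref{theorem4.2}; and (ii) the algebraic cancellation that collapses $[2-\phi-\gamma+\rho^{2}(1-\gamma)^{2}/\gamma]-(1-\phi)$ down to $[1-\gamma+\rho^{2}(1-\gamma)^{2}/\gamma]$, which is the whole reason the obstruction coincides with the one in Theorem~\ref{theorem4.1} despite the different scaling in the ansatz. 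For an explicit presentation I would treat $n=3$ first (reading off the $\nu^{4}$ coefficient directly) and then observe that for $n>3$ the coefficients of $\nu^{n+1},\ldots,\nu^{2n-2}$ all involve the same forbidden bracket, so the contradiction closes at the top degree.
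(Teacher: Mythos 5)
Your proposal is correct and follows essentially the same route as the paper's Appendix E: substitute the ansatz into \eqref{eq40}, observe that under (B.7) every term except the two $m_{2}^{2}(\nu)$ pieces has degree at most $n$ in $\nu$, and read off the top coefficient (of $\nu^{4}$ for $n=3$, of $\nu^{2n-2}$ in general), which forces the same impossible condition $\rho^{2}=\gamma/(\gamma-1)$ identified in \eqref{eq31}. The only difference is presentational: you carry out explicitly the simplification $\frac{1}{(1-\phi)^{2}}\bigl[2-\phi-\gamma+\rho^{2}(1-\gamma)^{2}/\gamma\bigr]-\frac{1}{1-\phi}=\frac{1}{(1-\phi)^{2}}\bigl[1-\gamma+\rho^{2}(1-\gamma)^{2}/\gamma\bigr]$ and close the argument at the single top degree $\nu^{2n-2}$ with coefficient proportional to $A_{n}^{2}(\tau)$, whereas the paper delegates this to the proof of Theorem \ref{theorem4.2}; your version is, if anything, slightly tighter on the degree bookkeeping.
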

\begin{proof}
See Appendix E.
\end{proof}

\begin{remark}
 We note that Liu \cite{Liu2007} considered the portfolio and consumption optimization problems with the power utility in finite time region. Chen and Escobar-Anel \cite{Cheng2023} pointed out that "Liu \cite{Liu2007} was the first to mention that a polynomial with an order higher than 2 cannot help in solving the PDE, but no proof was provided" and thus they presented the proof of the solvability of the portfolio optimization problem with the power utility in finite time region under the conjecture of exponential-polynomial of the value function. However, to the best of our knowledge, when considering the portfolio and consumption optimization problems, how to give the proof that a polynomial with an order higher than 2 cannot help in solving the PDE is still in a blank. Fortunately, due to the fact that the Epstein-Zin aggregator can reduce to the power utility, the study of Theorem \ref{theorem4.4} can be applied to fill in such a blank.
\end{remark}

We will further verify that by the form of \eqref{eq41}, the problem \eqref{eq17} with $f(c,J)$ given by \eqref{eq10} can be approximately solvable under the log-linear approximation method when $n=0,1,2$.

\begin{theorem}\label{theorem4.12}
Assume that $h$ is conjectured in the form of \eqref{eq41}. (i) If the condition (B.8) is satisfied, then the PDE \eqref{eq39} has an approximate solution under the log-linear approximation method when $n=0$; (ii) If the condition (B.9) is satisfied, then the PDE \eqref{eq39} has an approximate solution under the log-linear approximation method when $n=1$; (iii) If the condition (B.7) is satisfied, then the PDE \eqref{eq39} has an approximate solution under the log-linear approximation method when $n=2$.
\end{theorem}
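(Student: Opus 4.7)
The plan is to substitute the ansatz \eqref{eq41} into the log-linearly approximated PDE \eqref{eq40}, expand in powers of $\nu$, and match coefficients to reduce the PDE to a system of ODEs in $\tau = T-t$ for the coefficient functions $A_k(\tau)$. Under the respective assumptions (B.8), (B.9), and (B.7), the resulting system should close at the same polynomial order as the ansatz, and solvability will follow by explicit integration for $n=0,1$ and by a Riccati analysis for $n=2$. In each case the boundary conditions $A_0(0)=(\phi-1)\ln\varepsilon$ and $A_k(0)=0$ for $k\geq 1$ arise directly from the terminal condition $\omega(T,x,\nu)=\varepsilon^{1-\gamma}x^{1-\gamma}/(1-\gamma)$ imposed on \eqref{eq37}. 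The overall structure mirrors the proof of Theorem \ref{theorem4.10}, the only essential difference being that the extra nonlinear term $\beta^\phi h^{-1}$ in \eqref{eq39} has been replaced by its log-linear surrogate $\zeta_3+\zeta_4(\phi\ln\beta-\ln h)$, which is linear in $\ln h$ and hence compatible with the exponential-polynomial ansatz.

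For $n=0$, take $h(t,\nu)=\exp\{A_0(T-t)\}$. Under (B.8), $\eta^2(\nu)$ is constant and all derivatives of $h$ in $\nu$ vanish, so \eqref{eq40} collapses to a first-order linear ODE for $A_0(\tau)$ with constant coefficients (the unknown enters through $\ln h = A_0$). This is solved explicitly by a variation-of-constants formula matching $A_0(0)=(\phi-1)\ln\varepsilon$. For $n=1$, take $h=\exp\{A_0(\tau)+A_1(\tau)\nu\}$. Then $h_\nu/h=A_1$, $h_\nu^2/h^2=A_1^2$ and $h_{\nu\nu}/h=0$, so all the $\nu$-dependent coefficients in \eqref{eq40} appear multiplied by constants; by (B.9), $\eta^2(\nu)$, $m_1(\nu)$, $\eta(\nu)m_2(\nu)$ and $m_2^2(\nu)$ are linear in $\nu$, so the total expression is a polynomial in $\nu$ of degree exactly $1$. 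Matching the coefficients of $\nu^1$ and $\nu^0$ separately produces a triangular system: $A_1(\tau)$ satisfies an autonomous linear ODE that is solved first, and $A_0(\tau)$ then solves a linear non-autonomous ODE driven by $A_1$. Both integrations are elementary.

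For $n=2$, take $h=\exp\{A_0(\tau)+A_1(\tau)\nu+\tfrac{1}{2}A_2(\tau)\nu^2\}$, so that $h_\nu/h=A_1+A_2\nu$, $h_\nu^2/h^2=(A_1+A_2\nu)^2$ and $h_{\nu\nu}/h=A_2+(A_1+A_2\nu)^2$. Under (B.7), substitution into \eqref{eq40} yields a polynomial identity in $\nu$ of exact degree $2$ (the potentially problematic $\nu^3$ and $\nu^4$ terms cancel precisely because $m_2^2(\nu)$ is constant while $m_1(\nu)$ and $\eta(\nu)m_2(\nu)$ are linear; this is the same balance exploited in Theorem \ref{theorem4.10}). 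Matching coefficients of $\nu^2$, $\nu^1$, $\nu^0$ gives, respectively, a scalar Riccati ODE for $A_2(\tau)$, a linear ODE for $A_1(\tau)$ driven by $A_2$, and a linear ODE for $A_0(\tau)$ driven by $A_1,A_2$, all with the prescribed initial data.

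The main obstacle is the Riccati equation for $A_2$ in the case $n=2$. Starting from $A_2(0)=0$, a local solution always exists, but one must exclude a finite-time blow-up on $[0,T]$. The standard route is to linearize via the substitution $A_2(\tau)=-u'(\tau)/(a\, u(\tau))$ for the appropriate coefficient $a$, which converts the Riccati into a second-order linear ODE with constant coefficients; a closed-form expression for $A_2$ in terms of hyperbolic or trigonometric functions follows, from which one reads off a sign condition on the discriminant guaranteeing that the denominator $u(\tau)$ does not vanish on $[0,T]$. Once $A_2$ is in hand, $A_1$ and $A_0$ are recovered by direct quadratures. Verifying the terminal conditions completes the proof.
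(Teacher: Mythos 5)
Your proposal follows essentially the same route as the paper's Appendix F: substitute the exponential-polynomial ansatz into the log-linearized PDE \eqref{eq40}, expand in powers of $\nu$, and match coefficients to obtain a triangular system of ODEs in $\tau=T-t$. In fact the paper stops at the coefficient-matching stage and simply asserts solvability of the resulting identities, so your additional discussion of the Riccati equation for $A_{2}$ and the exclusion of finite-time blow-up on $[0,T]$ is more careful than the published argument (the paper only carries out that analysis later, in the Heston application of Proposition \ref{proposition5.1}).

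One inaccuracy to fix: in the case $n=1$ under (B.9), the function $m_{2}^{2}(\nu)$ is \emph{linear} in $\nu$, so the terms
$\frac{1}{2(1-\phi)^{2}}m_{2}^{2}(\nu)\bigl[2-\phi-\gamma+\rho^{2}(1-\gamma)^{2}/\gamma\bigr]A_{1}^{2}$ and $-\frac{1}{2(1-\phi)}m_{2}^{2}(\nu)A_{1}^{2}$ contribute a coefficient proportional to $A_{1}^{2}$ at order $\nu^{1}$. The equation for $A_{1}(\tau)$ is therefore a scalar Riccati equation, not an autonomous linear ODE (compare \eqref{eq63} in the Heston example, which is exactly of this type). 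This does not damage the overall strategy --- the same linearization and non-blow-up argument you invoke for $n=2$ applies verbatim --- but as written the $n=1$ step claims an elementary integration where a Riccati analysis is actually required.
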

\begin{proof}
See Appendix F.
\end{proof}

\section{Applications}
In this section, we will concentrate on a class of famous stochastic volatility model in which the volatility is captured by a square-root form as suggested by Heston \cite{heston1993closed}. Specially, the dynamics of the risky asset follows
\begin{equation}
   \left\{ \begin{aligned}\label{eq45}
   &\mathrm{d}S_{t}=S_{t}\left[(r+\xi \nu_{t})\mathrm{d}t+\sqrt{\nu_{t}}(\rho\mathrm{d}W^{\nu}_{t}+\sqrt{1-\rho^{2}}\mathrm{d}W^{\perp}_{t})\right], \\
   &\mathrm{d}\nu_{t}=\kappa(\theta-\nu_{t})\mathrm{d}t+\sigma\sqrt{\nu_{t}}\mathrm{d}W^{\nu}_{t},
  \end{aligned}\right.
\end{equation}
where $r$, $\xi$, $\kappa$, $\theta$ and $\sigma$ are positive constants representing the risk-free interest rate, the premium for volatility, the mean-reversion rate, the long-run mean and the volatility of volatility, respectively, $\rho$ denotes the correlation coefficient between $S_{t}$ and $\nu_{t}$. It follows from equations \eqref{eq2}, \eqref{eq3} and \eqref{eq45} that $\eta(\nu_{t})=\xi \sqrt{\nu_{t}}$, $G(\nu_{t}, t)=\sqrt{\nu_{t}}$, $m_{1}(\nu_{t})=\kappa(\theta-\nu_{t})$ and $m_{2}(\nu_{t})=\sigma\sqrt{\nu_{t}}$. Then in this case the equation \eqref{eq14} becomes
\begin{align}\label{eq46}
\mathrm{d}X_{t}=\left[r+\xi \sqrt{\nu_{t}}\psi_{t}\right]X_{t}\mathrm{d}t-c_{t}\mathrm{d}t+\psi_{t}X_{t}(\rho\mathrm{d}W^{\nu}_{t}+\sqrt{1-\rho^{2}}\mathrm{d}W^{\perp}_{t}).
\end{align}
Moreover, we focus on the case of $\phi\leq1$ and $\gamma>1$.

Since $\eta^{2}(\nu)$, $m_{1}(\nu)$, $\eta(\nu) m_{2}(\nu)$ and $m^{2}_{2}(\nu)$ are linear in $\nu$, $h$ is conjectured in the exponential-linear form in Theorems \ref{theorem4.1}, \ref{theorem4.2}, \ref{theorem4.3} and \ref{theorem4.4}. In addition, we can adopt the results in Section 4 to give explicit solutions to the consumption-portfolio problems with recursive preferences under Heston's model as follows.
\begin{proposition}\label{proposition5.1}
{\rm(i)} Assume that the investor's objective is captured by \eqref{eq15} where $f(c,J)$ is given by \eqref{eq11}. If the conditions (B.1), (B.5) and (B.6) are satisfied and $
(1-2\gamma)\rho^{2}\sigma^{2}(1-\gamma)^{2}\hat{A}_{1}^{2}+2\xi\rho\sigma(1-\gamma)^{2}\hat{A}_{1}+\xi^{2}\leq\frac{\kappa^{2}\gamma^{2}}{2(1-\gamma)\sigma^{2}}
$, then the optimal consumption-investment strategy follows
\begin{equation}\label{eq47}
   \left\{ \begin{aligned}
   &\hat{c}^{*}_{t}=\beta x,\\
   &\hat{\pi}^{*}_{t}=\frac{\xi}{\gamma}+\frac{\rho\sigma(1-\gamma)\hat{A}_{1}}{\gamma}
  \end{aligned}\right.
\end{equation}
and the value function is
\begin{equation}\label{eq48}
  \hat{\omega}(x,\nu)=\frac{x^{1-\gamma}}{1-\gamma}\exp\{(1-\gamma)(\hat{A}_{0}+\hat{A}_{1}\nu)\},
\end{equation}
where $\hat{A}_{0}$ and $\hat{A}_{1}$ are given by equations \eqref{eq49} and \eqref{eq50}, respectively.

{\rm(ii)} Assume that the investor's objective is described by \eqref{eq15} where $f(c,J)$ follows \eqref{eq10}. If the conditions (B.1), (B.10), (B.11) and (B.12) are satisfied and $
(1-2\gamma)\rho^{2}\sigma^{2}(1-\gamma)^{2}\frac{\check{A}_{1}^{2}}{(1-\phi)^{2}}-2\xi\rho\sigma(1-\gamma)^{2}\frac{\check{A}_{1}}{1-\phi}+\xi^{2}\leq\frac{\kappa^{2}\gamma^{2}}{2(1-\gamma)\sigma^{2}}
$, then the optimal approximate consumption-investment strategy satisfies
\begin{equation}\label{eq54}
   \left\{ \begin{aligned}
   &\check{c}^{*}_{t}=\beta^{\phi}\check{h}^{-1}x,\\
   &\check{\pi}^{*}_{t}=\frac{\xi}{\gamma}-\frac{\rho\sigma(1-\gamma)\frac{\check{A}_{1}}{1-\phi}}{\gamma}
  \end{aligned}\right.
\end{equation}
and the value function is
\begin{equation}\label{eq55}
  \check{\omega}(x,\nu)=\frac{x^{1-\gamma}}{1-\gamma}\check{h}(\nu)^{-\frac{1-\gamma}{1-\phi}}
\end{equation}
with $\check{h}(\nu)=\exp\{\check{A}_{0}+\check{A}_{1}\nu\}$, where $\check{A}_{0}$ and $\check{A}_{1}$ are given by equations \eqref{eq52} and \eqref{eq53}, respectively.

{\rm(iii)} Assume that the investor's objective is given by \eqref{eq17} where $f(c,J)$ is defined by \eqref{eq11}. If the conditions (B.2) and (B.13) are satisfied, $(1-2\gamma)\rho^{2}\sigma^{2}(1-\gamma)^{2}\tilde{A}_{1}^{2}(t)+2\xi\rho\sigma(1-\gamma)^{2}\tilde{A}_{1}(t)+\xi^{2}\leq\frac{\kappa^{2}\gamma^{2}}{2(1-\gamma)\sigma^{2}}$ and $\tilde{A}_{1}(t)>0$ for $t\in[0,T]$, then the optimal consumption-investment strategy
can be written as
\begin{equation}\label{eq58}
   \left\{ \begin{aligned}
   &\tilde{c}^{*}_{t}=\beta x,\\
   &\tilde{\pi}^{*}_{t}=\frac{\xi}{\gamma}+\frac{\rho\sigma(1-\gamma)\tilde{A}_{1}(T-t)}{\gamma}
  \end{aligned}\right.
\end{equation}
and the value function is
\begin{equation}\label{eq59}
  \tilde{\omega}(t,x,\nu)=\frac{x^{1-\gamma}}{1-\gamma}\exp\{(1-\gamma)(\tilde{A}_{0}(T-t)+\tilde{A}_{1}(T-t)\nu)\},
\end{equation}
where $\tilde{A}_{0}$ and $\tilde{A}_{1}$ are given by equations \eqref{eq56} and \eqref{eq57}, respectively.

{\rm(iv)} Assume that the investor's objective is depicted by \eqref{eq17} where $f(c,J)$ satisfies \eqref{eq10}. If the conditions (B.2), (B.10) and (B.14) are satisfied, $
(1-2\gamma)\rho^{2}\sigma^{2}(1-\gamma)^{2}\frac{\underline{A}_{1}^{2}(t)}{(1-\phi)^{2}}-2\xi\rho\sigma(1-\gamma)^{2}\frac{\underline{A}_{1}(t)}{1-\phi}+\xi^{2}\leq\frac{\kappa^{2}\gamma^{2}}{2(1-\gamma)\sigma^{2}}
$ and $\underline{A}_{1}(t)<0$ for $t\in[0,T]$, then the optimal approximate consumption-investment strategy is
\begin{equation}\label{eq66}
   \left\{ \begin{aligned}
   &\underline{c}^{*}_{t}=\beta^{\phi}\underline{h}^{-1}x,\\
   &\underline{\pi}^{*}_{t}=\frac{\xi}{\gamma}-\frac{\rho\sigma(1-\gamma)\frac{\underline{A}_{1}(T-t)}{1-\phi}}{\gamma}
  \end{aligned}\right.
\end{equation}
and the value function is
\begin{equation}\label{eq67}
  \underline{\omega}(t,x,\nu)=\frac{x^{1-\gamma}}{1-\gamma}\underline{h}(t,\nu)^{-\frac{1-\gamma}{1-\phi}}
\end{equation}
with $\underline{h}(t,\nu)=\exp\{\underline{A}_{0}(T-t)+\underline{A}_{1}(T-t)\nu\}$, where $\underline{A}_{0}$ and $\underline{A}_{1}$ are given by equations \eqref{eq65} and \eqref{eq63}, respectively.
\end{proposition}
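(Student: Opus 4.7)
For Heston's model the coefficients are $\eta(\nu)=\xi\sqrt{\nu}$, $G(\nu,t)=\sqrt{\nu}$, $m_{1}(\nu)=\kappa(\theta-\nu)$ and $m_{2}(\nu)=\sigma\sqrt{\nu}$, so $\eta^{2}(\nu)$, $m_{1}(\nu)$, $\eta(\nu)m_{2}(\nu)$ and $m_{2}^{2}(\nu)$ are all linear in $\nu$ and assumption (B.9) is satisfied. This is exactly the regime in which Theorems \ref{theorem4.6}, \ref{theorem4.8}, \ref{theorem4.10} and \ref{theorem4.12} guarantee solvability of the HJB equation under the exponential-linear ansatz $h(\nu)=\exp\{A_{0}+A_{1}\nu\}$ (or $h(t,\nu)=\exp\{A_{0}(T-t)+A_{1}(T-t)\nu\}$ in the finite-horizon case). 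The plan is therefore to treat each item (i)-(iv) by inserting the Heston coefficients and the exponential-linear ansatz into the corresponding reduced PDE (\eqref{eq20}, \eqref{eq26} together with \eqref{eq28}, \eqref{eq34}, or \eqref{eq40}); matching coefficients of $\nu^{0}$ and $\nu^{1}$; solving for $A_{0}$ and $A_{1}$; and finally reading off $(c^{*},\psi^{*})$ from the feedback formulas \eqref{eq19}, \eqref{eq25}, \eqref{eq33} or \eqref{eq38} and converting $\pi^{*}=\psi^{*}/G(\nu,t)=\psi^{*}/\sqrt{\nu}$.

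\textbf{Coefficient matching.} In cases (i) and (ii), the $\nu^{1}$-term of the substituted PDE produces a scalar quadratic in $A_{1}$ whose coefficients involve $\gamma,\rho,\sigma,\xi,\kappa$ (and, for (ii), the log-linearisation constants $\zeta_{1},\zeta_{2}$ from \eqref{eq27}); the $\nu^{0}$-term then gives $A_{0}$ as an affine function of $A_{1}$. I pick $\hat{A}_{1}$ (resp.\ $\check{A}_{1}$) to be the unique root yielding a finite value function as $T\to\infty$; realness of the square root and the appropriate sign are exactly what the discriminant-type inequality in the proposition enforces. In cases (iii) and (iv), the same separation converts the PDE into a Riccati ODE for $A_{1}(\tau)$ and an affine ODE for $A_{0}(\tau)$, with $\tau=T-t$ and initial data $A_{1}(0)=0$, $A_{0}(0)=\ln\varepsilon$ or $(\phi-1)\ln\varepsilon$. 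The Riccati equation admits a standard closed-form solution expressible through the square root of its discriminant, and integration yields $A_{0}(\tau)$; the assumed sign conditions $\tilde{A}_{1}(t)>0$ and $\underline{A}_{1}(t)<0$ pin down the correct branch. The optimal consumption $c^{*}=\beta x$ drops out in (i) and (iii) because $\omega_{x}/\omega=(1-\gamma)/x$ for the unit-EIS ansatz, while (ii) and (iv) give $c^{*}=\beta^{\phi}h^{-1}x$; in all four cases $\psi^{*}=\sqrt{\nu}[\xi+\rho\sigma(1-\gamma)A_{1}]/\gamma$, with the substitution $A_{1}\mapsto -A_{1}/(1-\phi)$ in the non-unit EIS cases coming from the exponent $-(1-\gamma)/(1-\phi)$ in the ansatz. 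Dividing by $\sqrt{\nu}$ gives the stated constant-in-$\nu$ strategies.

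\textbf{Main obstacle.} The algebra above is essentially mechanical; the genuine difficulty lies in verifying that the candidate controls satisfy the admissibility and martingale/transversality hypotheses of Theorems \ref{theorem4.5}, \ref{theorem4.7}, \ref{theorem4.9} and \ref{theorem4.11} for the specific Heston dynamics. Under the candidate control the wealth and variance processes keep their square-root Heston structure, so checking (B.5)-(B.6), (B.10)-(B.12), (B.13)-(B.14) reduces to a Feller/Novikov-type moment bound on $\nu_{t}$ after the natural change of measure. A direct computation shows that the quantitative threshold for this moment bound is precisely
\[
(1-2\gamma)\rho^{2}\sigma^{2}(1-\gamma)^{2}A_{1}^{2}+2\xi\rho\sigma(1-\gamma)^{2}A_{1}+\xi^{2}\leq\frac{\kappa^{2}\gamma^{2}}{2(1-\gamma)\sigma^{2}},
\]
with $A_{1}$ replaced by $-\check{A}_{1}/(1-\phi)$ in (ii), by $\tilde{A}_{1}(T-t)$ in (iii), and by $-\underline{A}_{1}(T-t)/(1-\phi)$ in (iv); this is exactly the inequality assumed in each item of the proposition. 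Once this bound is in force, the four verification theorems of Section 4 deliver optimality and identify $\omega$ with the value function, completing the proof.
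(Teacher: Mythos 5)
Your proposal follows essentially the same route as the paper's Appendix G: substitute the Heston coefficients into the exponential-linear reductions of Theorems \ref{theorem4.6}, \ref{theorem4.8}, \ref{theorem4.10} and \ref{theorem4.12}, match the coefficients of $\nu^{0}$ and $\nu^{1}$ to obtain the quadratic (resp.\ Riccati) equations for $A_{1}$ and the affine equations for $A_{0}$, read off the feedback controls, and invoke the Section 4 verification theorems, with the displayed inequality playing exactly the role you assign it in your final paragraph, namely the exponential-moment condition on $\nu_{t}$ (via the Zeng--Taksar bounds) that guarantees $\mathbb{E}_{t}\left[\int_{t}^{\infty}\left|f(c^{*}_{s},J_{s})\right|\mathrm{d}s\right]<\infty$ and hence admissibility. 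One small correction to your second paragraph: that inequality is not what enforces ``realness of the square root'' or the sign of the chosen root --- in the paper the quadratic for $\hat{A}_{1}$ automatically has two real roots of opposite sign because $\hat{a}_{1}<0$ and $\hat{a}_{3}>0$, and the branch is selected by requiring the correct limit as $\gamma\to1$ (resp.\ $\phi\to1$), so attributing root selection to the assumed inequality is misplaced, though this does not affect the validity of your argument.
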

\begin{proof}
See Appendix G.
\end{proof}

\begin{remark}\label{remark3}
For the consumption and investment problem with recursive utility and finite horizon under Heston's model, Kraft at al. \cite{Kraft2013} chosen the particular configuration of preference parameters that cancel out the nonlinear terms in the Bellman equation and thus derived an analytical solution for the case of $\phi\neq1$, with the unfortunate consequence of tying again the parameter of risk aversion to the EIS. However, Proposition \ref{proposition5.1} (iv) considers an approximate solution where the parameter of risk aversion and the EIS are separate, which is different from the case of \cite{Kraft2013}.
\end{remark}

Next, we will carry out numerical experiments to capture the behavior of the optimal consumption and investment strategies. In our calculations, we set $\varepsilon=1$, and other model parameters are taken from \cite{Kraft2013}: $\beta=0.08$, $\gamma=2$, $\phi=0.125$, $r=0.05$, $\xi=\frac{7}{15}$, $\rho=-0.5$, $\kappa=5$, $\theta=0.0225$, $\sigma=0.25$ and $T=10$ (years).

Figures \ref{fig1}-\ref{fig4} demonstrate the results of Proposition \ref{proposition5.1} for the above selected parameters. Also we provide the optimal consumption and investment strategies for the investor with different risk-averse attitudes.

\begin{figure}[htbp]
	\begin{minipage}[t]{0.5\linewidth}
		\centering
		\includegraphics[width=3.0in]{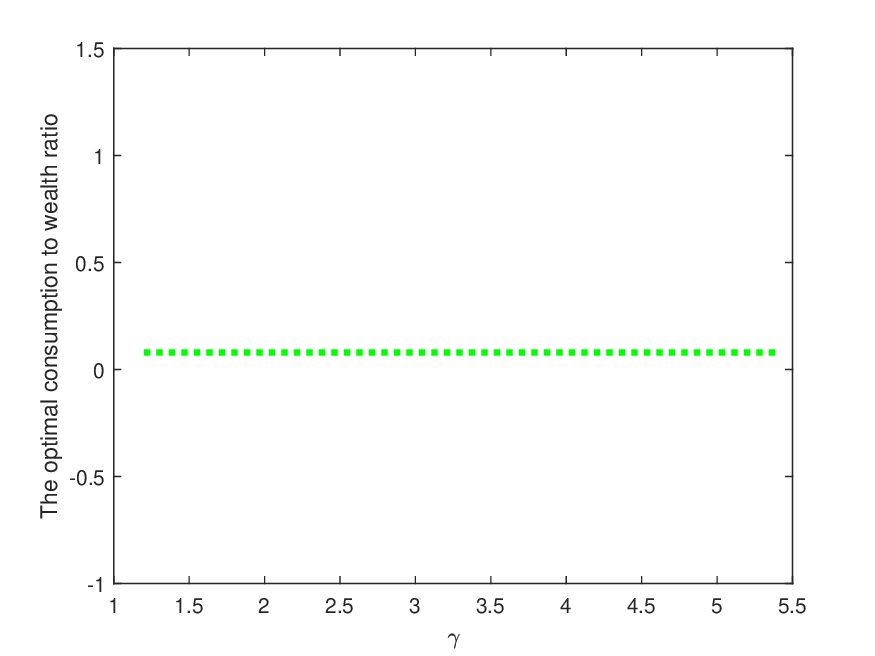}
        \caption*{(a)\;the effect of $\gamma$ on $\frac{\hat{c}^{*}_{t}}{X_{t}}$}
	\end{minipage}%
	\begin{minipage}[t]{0.5\linewidth}
		\centering
		\includegraphics[width=3.0in]{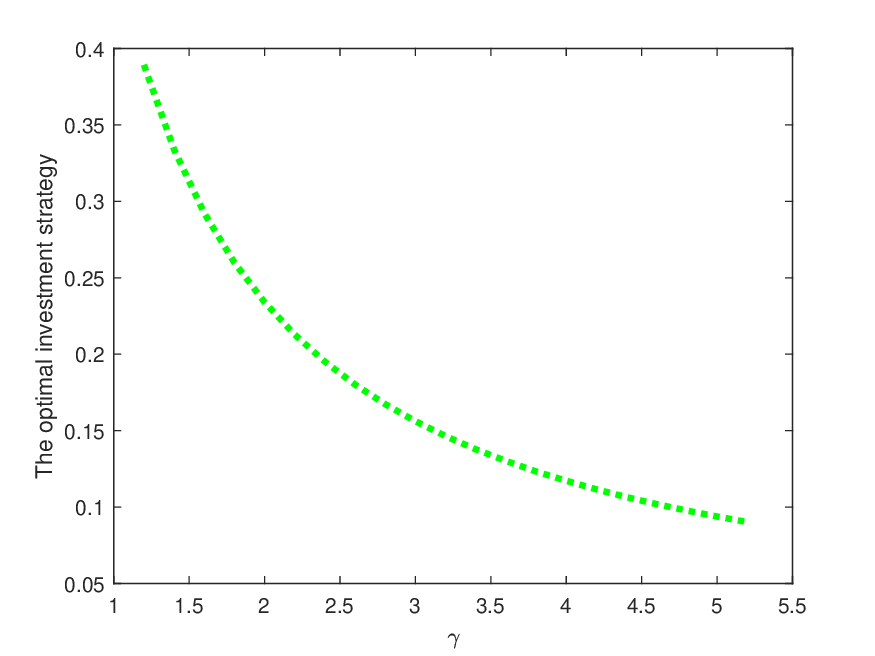}
        \caption*{(b)\;the effect of $\gamma$ on $\hat{\pi}^{*}_{t}$}
	\end{minipage}
    \caption{The effect of $\gamma$ on the optimal consumption-investment strategy \eqref{eq47}.}
	\label{fig1}
\end{figure}

\begin{figure}[htbp]
	\begin{minipage}[t]{0.5\linewidth}
		\centering
		\includegraphics[width=3.0in]{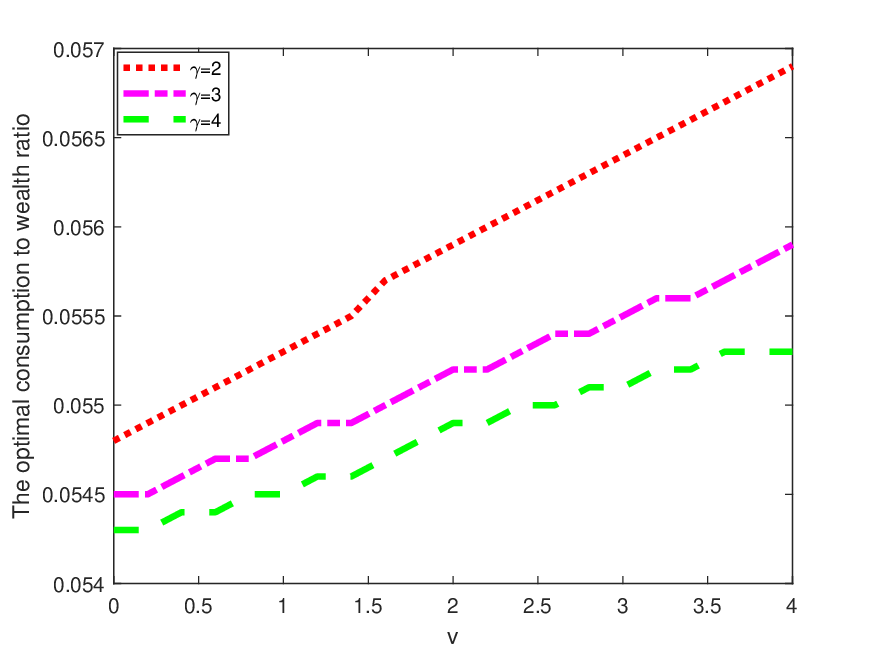}
        \caption*{(a)\;the effect of $\gamma$ on $\frac{\check{c}^{*}_{0}}{X_{0}}$}
	\end{minipage}%
	\begin{minipage}[t]{0.5\linewidth}
		\centering
		\includegraphics[width=3.0in]{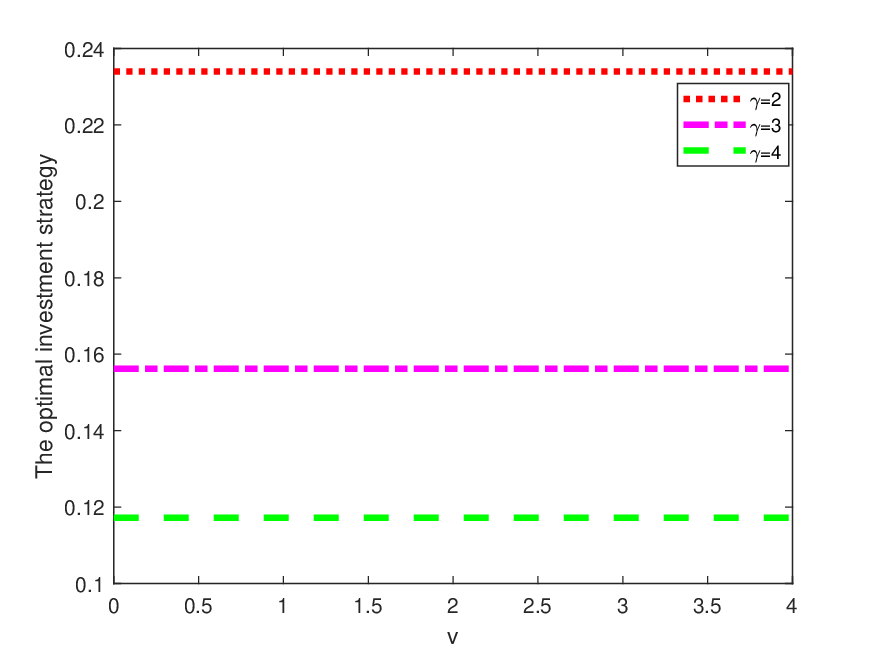}
        \caption*{(b)\;the effect of $\gamma$ on $\check{\pi}^{*}_{0}$}
	\end{minipage}
    \caption{The effect of $\gamma$ on the optimal consumption-investment strategy \eqref{eq54} at time $t=0$.}
	\label{fig2}
\end{figure}

\begin{figure}[htbp]
	\begin{minipage}[t]{0.5\linewidth}
		\centering
		\includegraphics[width=3.0in]{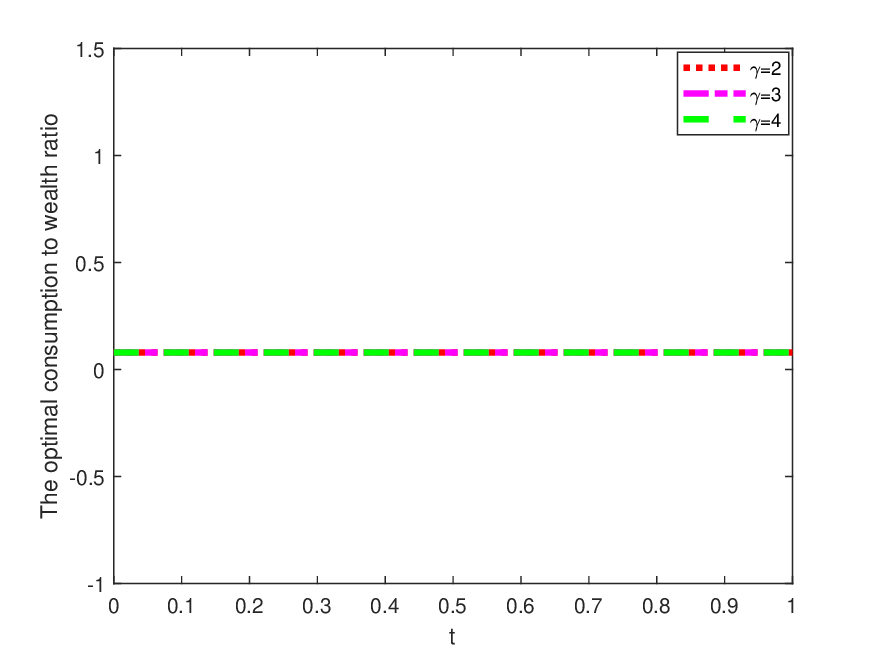}
        \caption*{(a)\;the effect of $\gamma$ on $\frac{\tilde{c}^{*}_{t}}{X_{t}}$}
	\end{minipage}%
	\begin{minipage}[t]{0.5\linewidth}
		\centering
		\includegraphics[width=3.0in]{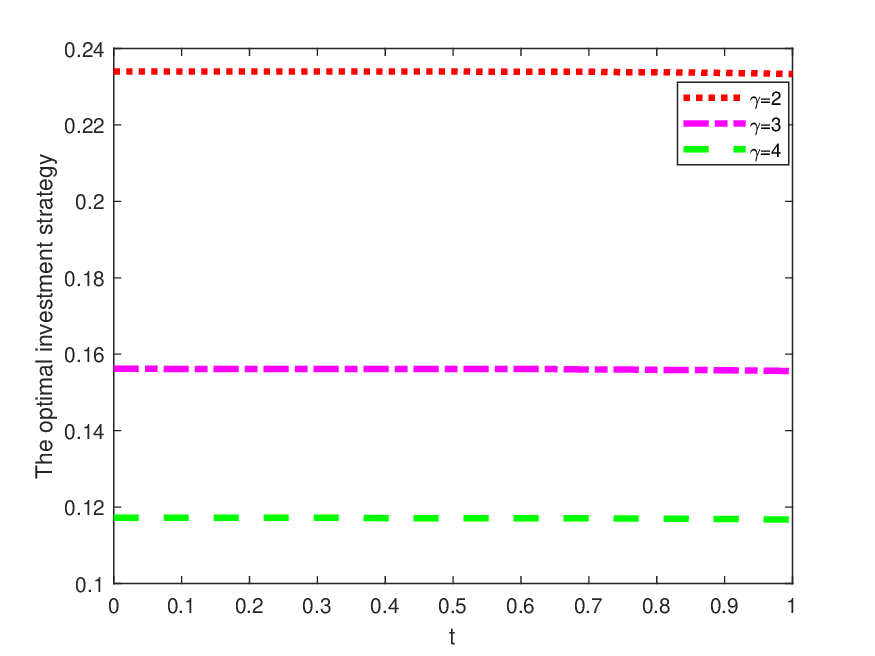}
        \caption*{(b)\;the effect of $\gamma$ on $\tilde{\pi}^{*}_{t}$}
	\end{minipage}
    \caption{The effect of $\gamma$ on the optimal consumption-investment strategy \eqref{eq58}.}
	\label{fig3}
\end{figure}

Figure \ref{fig1} (a) and Figure \ref{fig3} (a) show that the optimal consumption-to-wealth ratio is kept constant and equals to $\beta$ no matter how $\gamma$ is taken, which is consistent with the expressions of \eqref{eq47} and \eqref{eq58}. Figure \ref{fig2} (a) and Figure \ref{fig4} (a) display the optimal consumption-to-wealth ratio as a function of volatility at time $t=0$. Specially, the optimal consumption-to-wealth ratio increases with respects to the volatility in Figure \ref{fig2} (a) and  Figure \ref{fig4} (a). Under Heston's stochastic volatility model, the market price of risks is $\xi\sqrt{\nu_{t}}$. If the volatility increases, then the market price of risks is higher, which implies that investment opportunities are improving. Then, the improvement in investment opportunities generates a positive income effect but a negative intertemporal substitution effect. Since the EIS parameter $\phi<1$, the income effect dominates the intertemporal substitution effect. Therefore, the investor has a higher consumption-to-wealth ratio when the volatility increases. Moreover, Figure \ref{fig2} (a) and Figure \ref{fig4} (a) illustrate that the optimal consumption-to-wealth ratio is higher when the risk aversion coefficient is smaller. This is because if the investor holds a lower risk aversion coefficient, then she/he will tend to increase the investment in the risky asset, which brings a more significant income effect and then leads to that the income effect is more dominant than the intertemporal substitution effect. These diverse phenomena demonstrate that it is interesting to consider consumption under the stochastic volatility model with recursive utility.

\begin{figure}[htbp]
	\begin{minipage}[t]{0.5\linewidth}
		\centering
		\includegraphics[width=3.0in]{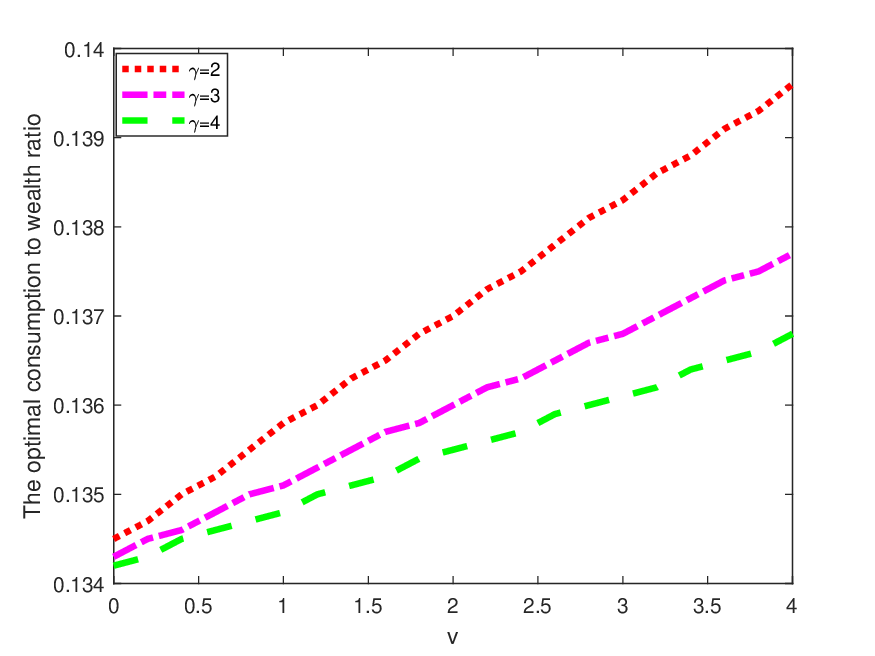}
        \caption*{(a)\;the effect of $\gamma$ on $\frac{\underline{c}^{*}_{0}}{X_{0}}$}
	\end{minipage}%
	\begin{minipage}[t]{0.5\linewidth}
		\centering
		\includegraphics[width=3.0in]{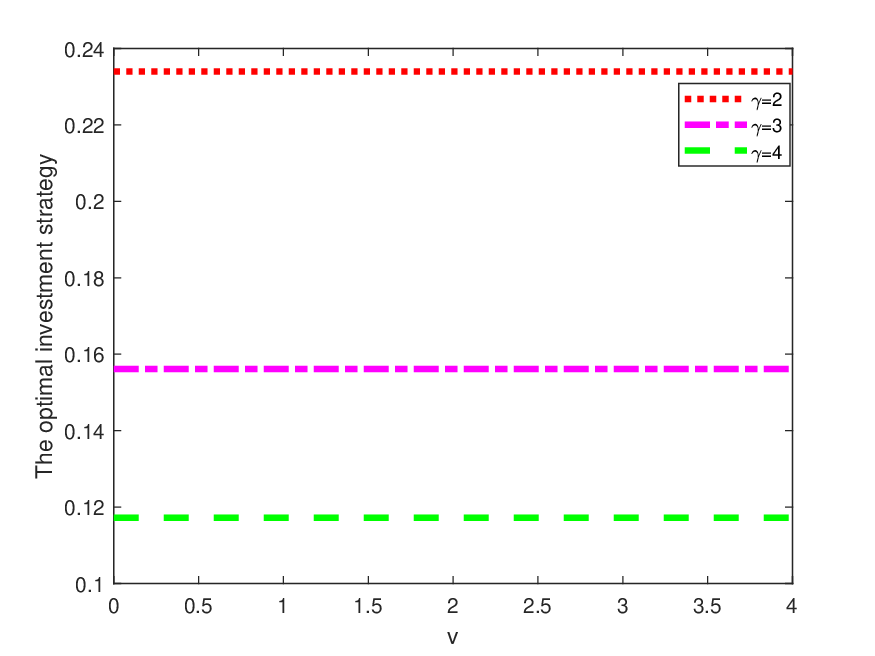}
        \caption*{(b)\;the effect of $\gamma$ on $\underline{\pi}^{*}_{0}$}
	\end{minipage}
    \caption{The effect of $\gamma$ on the optimal consumption-investment strategy \eqref{eq66} at time $t=0$.}
	\label{fig4}
\end{figure}

Figure \ref{fig2} (b) and Figure \ref{fig4} (b) disclose that the optimal investment strategy is insensitive to the volatility at initial time. It is worth mentioning that this phenomenon reflected in Figure \ref{fig4} (b) is consistent with Figure 4 in \cite{Kraft2013}. In addition, Figure \ref{fig3} (b) depicts that the optimal investment strategy is also insensitive to the time and only decreases very slightly in time. However, it can be clearly seen from (b) in Figures \ref{fig1}-\ref{fig4} that more risk-averse investor is more cautious by putting lower investment ratio in the risky asset, which is identical with our common sense.

\begin{figure}[htbp]
	\begin{minipage}[t]{0.5\linewidth}
		\centering
		\includegraphics[width=3.0in]{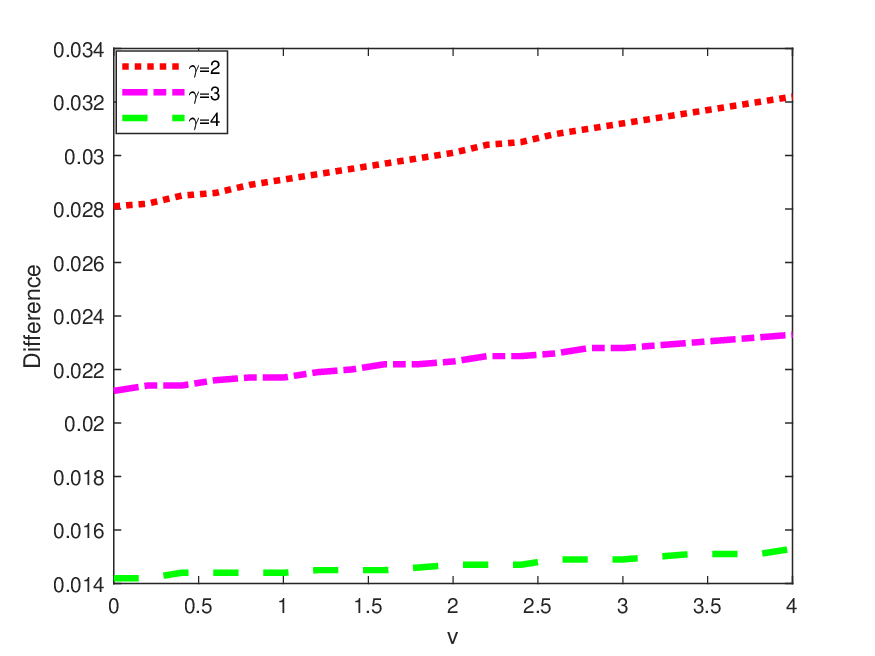}
        \caption*{(a)\; difference between $\frac{\underline{c}^{*}_{0}}{X_{0}}$ and $\frac{c^{\star}_{0}}{X^{\star}_{0}}$ in (5.2) in \cite{Kraft2013}}
	\end{minipage}%
	\begin{minipage}[t]{0.5\linewidth}
		\centering
		\includegraphics[width=3.0in]{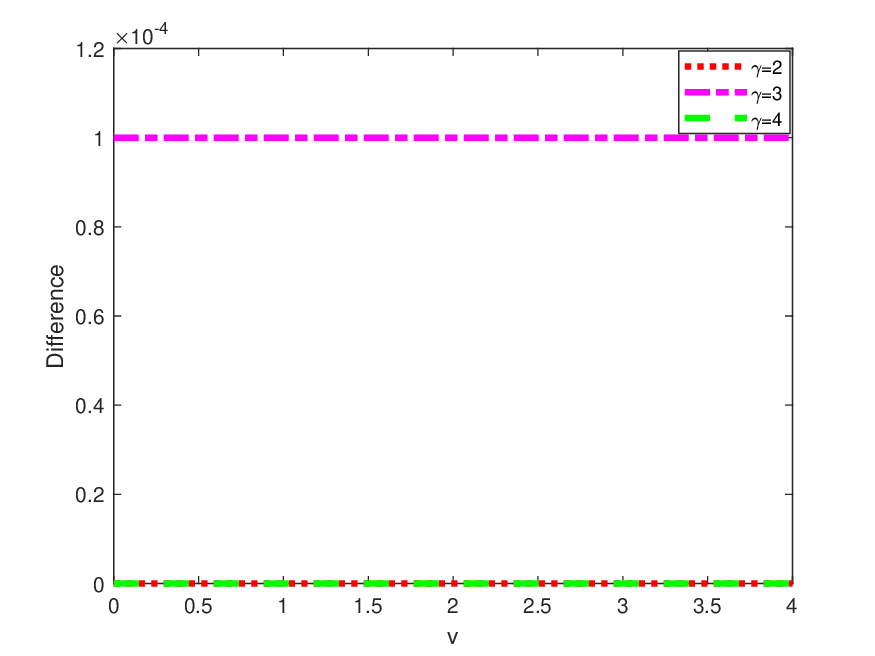}
        \caption*{(b)\; difference between $\underline{\pi}^{*}_{0}$ and $\pi^{\star}_{0}$ in (5.2) in \cite{Kraft2013}}
	\end{minipage}
    \caption{Difference between the optimal consumption-investment strategies \eqref{eq66} and (5.2) in \cite{Kraft2013} at time $t=0$.}
	\label{fig5}
\end{figure}

Showing the accuracy of the approximation solution \eqref{eq66}, Figure \ref{fig5} demonstrates the difference between the approximation solution \eqref{eq66} and the analytical solution (5.2) in \cite{Kraft2013} as a function of volatility at time $t=0$. We denote the difference between $\frac{\underline{c}^{*}_{0}}{X_{0}}$ and $\frac{c^{\star}_{0}}{X^{\star}_{0}}$ in (5.2) in \cite{Kraft2013} by $\frac{\underline{c}^{*}_{0}}{X_{0}}-\frac{c^{\star}_{0}}{X^{\star}_{0}}$. So, it follows from Figure \ref{fig5} (a) that the value of $\frac{\underline{c}^{*}_{0}}{X_{0}}$ is larger than that of $\frac{c^{\star}_{0}}{X^{\star}_{0}}$ in (5.2) in \cite{Kraft2013} and thus there exists a certain error. Moreover, the greater the volatility is, the greater the error is. However, when the value of the risk aversion coefficient $\gamma$ increases, the difference between $\frac{\underline{c}^{*}_{0}}{X_{0}}$ and $\frac{c^{\star}_{0}}{X^{\star}_{0}}$ in (5.2) in \cite{Kraft2013} becomes smaller even if the volatility value is high, which reflects higher accuracy of the approximation solution for consumption. On the other hand, we can see from Figure \ref{fig5} (b) that regardless of the values of the risk aversion coefficient and the volatility, the approximation solution for investment is extremely consistent with the analytical solution for investment, which again shows the higher accuracy of the approximation solution \eqref{eq66}.

\section{Conclusions}
This paper was devoted to investigating consumption and portfolio optimization problems under the general stochastic volatility model with recursive preferences in both infinite and finite time regions. In both cases of time regions, we further considered the investor with unit EIS and general EIS. By the dynamic programming approach, the optimization problems were changed into the solvability analysis of the corresponding HJB equations. Since the HJB equations usually do not have analytical solutions when the investor takes general EIS, we turned to find approximate solutions through the log-linear approximation method. By the conjecture of the exponential-polynomial form of the value function of the optimization problems under mild conditions, we proved that, when the order of the polynomial $n\leq2$, the HJB equation exists an analytical solution if the investor with unit EIS and an approximation solution by the log-linear approximation method otherwise. We also proved that the HJB equation has no analytical solutions or approximation solutions by the log-linear approximation method under the conjecture of the exponential-polynomial form of the value function when the order of the polynomial $n>2$. Moreover, some applications of our study to Heston's model were presented and expressions for the analytical and approximate solutions were also given. Finally, some numerical experiments were provided to illustrate the behavior of the optimal consumption-portfolio strategies.

Without further essential difficulties, our studies also work when considering multiple risky assets \cite{Shirzadi2020} and multiple state variable as well as ambiguity-aversion within the framework of \cite{Liu2010, Maenhout2004}. It is also interesting to derive the expressions for analytical or approximate solutions under more specific stochastic volatility models. We hope to present relevant study in the future.

\section*{Appendices}
\appendix
\renewcommand{\appendixname}{Appendix~\Alph{section}}

\section{Proof of Theorem \ref{theorem4.2}}
\begin{proof}
Similar to the proof of Theorem \ref{theorem4.1}, we first study the case when $n=3$ and then consider the case when $n>3$.

\noindent\textbf{Exponential-cubic:} Assume that $h(\nu)=\exp\{A_{0}+A_{1}\nu+\frac{1}{2}A_{2}\nu^{2}+\frac{1}{3}A_{3}\nu^{3}\}$.

Inserting the partial derivatives of $h$ and \eqref{eq28} into \eqref{eq26}, we can derive that
\begin{align}\label{eq30}
  r&-\zeta_{1}-\zeta_{2}\phi \ln\beta+\zeta_{2}(A_{0}+A_{1}\nu+\frac{1}{2}A_{2}\nu^{2}+\frac{1}{3}A_{3}\nu^{3})+\frac{1}{2\gamma}\eta^{2}(\nu)\nonumber\\
  &+\frac{\beta\phi}{\phi-1}\left[\frac{1}{\beta}\bigg(\zeta_{1}+\zeta_{2}(\phi \ln\beta-A_{0}-A_{1}\nu-\frac{1}{2}A_{2}\nu^{2}-\frac{1}{3}A_{3}\nu^{3})\bigg)-1\right]\nonumber\\
  &-\frac{1}{1-\phi}\left[m_{1}(\nu)+\frac{\eta(\nu)}{\gamma}\rho m_{2}(\nu)(1-\gamma)\right](A_{1}+A_{2} \nu+A_{3} \nu^{2})\nonumber\\
  &+\frac{ 1}{2(1-\phi)^{2}}m^{2}_{2}(\nu)
  \left[2-\phi-\gamma+\frac{\rho^{2}(1-\gamma)^{2}}{\gamma}\right](A_{1}+A_{2} \nu+A_{3} \nu^{2})^{2}\nonumber\\
  &-\frac{ 1}{2(1-\phi)}m^{2}_{2}(\nu)\left[(A_{1}+A_{2} \nu+A_{3} \nu^{2})^{2}+A_{2}+2A_{3} \nu\right]=0.
\end{align}
We can see that the above equation has terms involving $\nu^{4}$. If this equation can be solvable, one has to cancel the terms involving $\nu^{4}$. It implies that under the condition (B.7),
\begin{equation}\label{eq31}
  \frac{ 1}{(1-\phi)^{2}}\left[2-\phi-\gamma+\frac{\rho^{2}(1-\gamma)^{2}}{\gamma}\right]-\frac{ 1}{1-\phi}=0
\end{equation}
should be satisfied. It is easy to derive that the equation \eqref{eq31} is equivalent to $\rho^{2}=\frac{\gamma}{\gamma-1}$, which does not hold by the proof of Theorem \ref{theorem4.1}. Therefore, the terms involving $\nu^{4}$ in \eqref{eq30} cannot be matched. Thus, the conjecture of the exponential-cubic form of $h$ cannot approximately solve the problem \eqref{eq15} by the log-linear approximation method when $\phi\neq1$.

\noindent\textbf{Exponential-nth:} Assume that $h(\nu)=\exp\{A_{0}+\sum\limits_{k=1}^{n}\frac{1}{k}A_{k}\nu^{k}\}$ with $n>3$.

By substituting the partial derivatives of $h$ and \eqref{eq28} into \eqref{eq26}, we have
\begin{align*}
  r&-\zeta_{1}-\zeta_{2}\left(\phi \ln\beta-A_{0}-\sum\limits_{k=1}^{n}\frac{1}{k}A_{k}\nu^{k}\right)+\frac{\beta\phi}{\phi-1}\left[\frac{1}{\beta}\bigg(\zeta_{1}+\zeta_{2}(\phi \ln\beta-A_{0}-\sum\limits_{k=1}^{n}\frac{1}{k}A_{k}\nu^{k})\bigg)-1\right]\\
  &+\frac{1}{2\gamma}\eta^{2}(\nu)-\frac{1}{1-\phi}\left[m_{1}(\nu)+\frac{\eta(\nu)}{\gamma}\rho m_{2}(\nu)(1-\gamma)\right]\sum\limits_{k=1}^{n}A_{k}\nu^{k-1}\nonumber\\
  &+\frac{ 1}{2(1-\phi)^{2}}m^{2}_{2}(\nu)\left[2-\phi-\gamma+\frac{\rho^{2}(1-\gamma)^{2}}{\gamma}\right]\left(\sum\limits_{k=1}^{n}A_{k}\nu^{k-1}\right)^{2}\\
  &-\frac{ 1}{2(1-\phi)}m^{2}_{2}(\nu)\left[\left(\sum\limits_{k=1}^{n}A_{k}\nu^{k-1}\right)^{2}+\sum\limits_{k=2}^{n}(k-1)A_{k}\nu^{k-2}\right]=0,
\end{align*}
which involves $\nu^{n+1},\cdot\cdot\cdot,\nu^{2n-2}$ for $n>3$. Since the equation \eqref{eq31} does not hold, these terms cannot be matched under the condition (B.7). Therefore, given $n>3$, the conjecture of the exponential-polynomial form of $h$ cannot approximately solve the problem \eqref{eq15} by the log-linear approximation method when $\phi\neq1$.
\end{proof}

\section{Proof of Theorem \ref{theorem4.8}}
\begin{proof}
In the sequel, we will state out the solvability of approximate solutions of the PDE \eqref{eq26} in terms of $\eta(\nu)$, $m_{1}(\nu)$ and $m_{2}(\nu)$ under the log-linear approximation method.

\noindent\textbf{Exponential-constant:} Assume that $h(\nu)=\exp\{A_{0}\}$.

Substituting the partial derivatives of $h$ and \eqref{eq28} into \eqref{eq26} yields
\begin{equation*}
  r-\zeta_{1}-\zeta_{2}(\phi \ln\beta-A_{0})+\frac{1}{2\gamma}\eta^{2}(\nu)+\frac{\beta\phi}{\phi-1}\left[\frac{1}{\beta}\bigg(\zeta_{1}+\zeta_{2}(\phi \ln\beta-A_{0})\bigg)-1\right]=0,
\end{equation*}
which implies that the PDE \eqref{eq26} can be approximately solvable by the log-linear approximation method under the condition (B.8).

\noindent\textbf{Exponential-linear:} Assume that $h(\nu)=\exp\{A_{0}+A_{1}\nu\}$.

Substituting the partial derivatives of $h$ and \eqref{eq28} into \eqref{eq26} leads to
\begin{align*}
  r&-\zeta_{1}-\zeta_{2}(\phi \ln\beta-A_{0}-A_{1}\nu)+\frac{1}{2\gamma}\eta^{2}(\nu)+\frac{\beta\phi}{\phi-1}\left[\frac{1}{\beta}\bigg(\zeta_{1}+\zeta_{2}(\phi \ln\beta-A_{0}-A_{1}\nu)\bigg)-1\right]\\
  &-\frac{1}{1-\phi}\left[m_{1}(\nu)+\frac{\eta(\nu)}{\gamma}\rho m_{2}(\nu)(1-\gamma)\right]A_{1}\nonumber\\
  &+\frac{ 1}{2(1-\phi)^{2}}m^{2}_{2}(\nu)\left[2-\phi-\gamma+\frac{\rho^{2}(1-\gamma)^{2}}{\gamma}\right]A_{1}^{2}-\frac{ 1}{2(1-\phi)}m^{2}_{2}(\nu)A_{1}^{2}=0,
\end{align*}
which implies that the PDE \eqref{eq26} can be approximately solvable by the log-linear approximation method under the condition (B.9).

\noindent\textbf{Exponential-quadratic:} Assume that $h(\nu)=\exp\{A_{0}+A_{1}\nu+\frac{1}{2}A_{2}\nu^{2}\}$.

Similarly, one can obtain that
\begin{align*}
  r&-\zeta_{1}-\zeta_{2}(\phi \ln\beta-A_{0}-A_{1}\nu-\frac{1}{2}A_{2}\nu^{2})+\frac{\beta\phi}{\phi-1}\left[\frac{1}{\beta}\bigg(\zeta_{1}+\zeta_{2}(\phi \ln\beta-A_{0}-A_{1}\nu-\frac{1}{2}A_{2}\nu^{2})\bigg)-1\right]\\
  &+\frac{1}{2\gamma}\eta^{2}(\nu)-\frac{1}{1-\phi}\left[m_{1}(\nu)+\frac{\eta(\nu)}{\gamma}\rho m_{2}(\nu)(1-\gamma)\right](A_{1}+A_{2} \nu)\nonumber\\
  &+\frac{ 1}{2(1-\phi)^{2}}m^{2}_{2}(\nu)\left[2-\phi-\gamma+\frac{\rho^{2}(1-\gamma)^{2}}{\gamma}\right](A_{1}+A_{2} \nu)^{2}-\frac{ 1}{2(1-\phi)}m^{2}_{2}(\nu)\left[(A_{1}+A_{2} \nu)^{2}+A_{2}\right]=0,
\end{align*}
which means that the PDE \eqref{eq26} can be approximately solvable by the log-linear approximation method under the condition (B.7).
\end{proof}

\section{Proof of Theorem \ref{theorem4.3}}

\begin{proof}
In the following, we will show that the conjecture form \eqref{eq35} does not work in solving the PDE \eqref{eq34} under the cases when $n=3$ and $n>3$.

\noindent\textbf{Exponential-cubic:} Assume that $h(t,\nu)=\exp\{A_{0}(T-t)+A_{1}(T-t)\nu+\frac{1}{2}A_{2}(T-t)\nu^{2}+\frac{1}{3}A_{3}(T-t)\nu^{3}\}$.

By using the partial derivatives of $h$ and the equation \eqref{eq34}, we have
\begin{align}\label{eq36}
  -A'_{0}&-A'_{1}\nu-\frac{1}{2}A'_{2}\nu^{2}-\frac{1}{3}A'_{3}\nu^{3}+r-\beta+\frac{1}{2\gamma}\eta^{2}(\nu)
  +\beta(\ln\beta-A_{0}-A_{1}\nu-\frac{1}{2}A_{2}\nu^{2}-\frac{1}{3}A_{3}\nu^{3})\nonumber\\
  &+\left[m_{1}(\nu)+\frac{\eta(\nu)}{\gamma}\rho m_{2}(\nu)(1-\gamma)\right](A_{1}+A_{2}\nu+A_{3}\nu^{2})+\frac{ 1}{2}m^{2}_{2}(\nu)\nonumber\\
  &\times\left[(1-\gamma)(A_{1}+A_{2}\nu+A_{3}\nu^{2})^{2}+A_{2}+2A_{3}\nu\right]
  +\frac{\rho^{2}}{2\gamma}m^{2}_{2}(\nu)(1-\gamma)^{2}(A_{1}+A_{2}\nu+A_{3}\nu^{2})^{2}=0,
\end{align}
where $A'_{0}$ denotes the derivative of $A_{0}(t)$ with respect to $t$. Similar expressions will be used later when there is no ambiguity. Similar to the proof of Theorem \ref{theorem4.1}, we can know that the terms involving $\nu^{4}$ in \eqref{eq36} cannot be matched under the condition (B.7) and then the conjecture of the exponential-cubic form of $h$ cannot solve the PDE \eqref{eq34}.

\noindent\textbf{Exponential-nth:} Assume that $h(t,\nu)=\exp\{A_{0}(T-t)+\sum\limits_{k=1}^{n}\frac{1}{k}A_{k}(T-t)\nu^{k}\}$ with $n>3$.

By substituting the partial derivatives of $h$ into \eqref{eq34}, we can derive that
\begin{align*}
  -A'_{0}&-\sum\limits_{k=1}^{n}\frac{1}{k}A'_{k}\nu^{k}+r-\beta+\frac{1}{2\gamma}\eta^{2}(\nu)
  +\beta\left(\ln\beta-A_{0}-\sum\limits_{k=1}^{n}\frac{1}{k}A_{k}\nu^{k}\right)\\
  &  +\left[m_{1}(\nu)+\frac{\eta(\nu)}{\gamma}\rho m_{2}(\nu)(1-\gamma)\right]\sum\limits_{k=1}^{n}A_{k}\nu^{k-1}\nonumber\\
  &+\frac{ 1}{2}m^{2}_{2}(\nu)\left[(1-\gamma)\left(\sum\limits_{k=1}^{n}A_{k}\nu^{k-1}\right)^{2}+\sum\limits_{k=2}^{n}(k-1)A_{k}\nu^{k-2}\right]
  +\frac{\rho^{2}}{2\gamma}m^{2}_{2}(\nu)(1-\gamma)^{2}\left(\sum\limits_{k=1}^{n}A_{k}\nu^{k-1}\right)^{2}=0,
\end{align*}
which involves $\nu^{n+1},\cdot\cdot\cdot,\nu^{2n-2}$ for $n>3$. It follows from the proof of Theorem \ref{theorem4.1} that these terms cannot be matched under the condition (B.7). Thus, when $n>3$, the exponential-polynomial form of $h$ cannot solve the PDE \eqref{eq34}.
\end{proof}

\section{Proof of Theorem \ref{theorem4.10}}
\begin{proof}
We will investigate the solvability of the PDE \eqref{eq34} in terms of $\eta(\nu)$, $m_{1}(\nu)$ and $m_{2}(\nu)$ under the conjecture form \eqref{eq35} when $n=0,1,2$.

\noindent\textbf{Exponential-constant:} Assume that $h(t,\nu)=\exp\{A_{0}(T-t)\}$.

Substituting the partial derivatives of $h$ into \eqref{eq34} leads to
\begin{equation*}
  -A'_{0}+r-\beta+\frac{1}{2\gamma}\eta^{2}(\nu)+\beta(\ln\beta-A_{0})=0.
\end{equation*}
Therefore, the PDE \eqref{eq34} is solvable under the condition (B.8).

\noindent\textbf{Exponential-linear:} Assume that $h(t,\nu)=\exp\{A_{0}(T-t)+A_{1}(T-t)\nu\}$.

Inserting the partial derivatives of $h$ into \eqref{eq34} yields
\begin{align*}
   -A'_{0}&-A'_{1}\nu+r-\beta+\frac{1}{2\gamma}\eta^{2}(\nu)+\beta(\ln\beta-A_{0}-A_{1}\nu)+\left[m_{1}(\nu)+\frac{\eta(\nu)}{\gamma}\rho m_{2}(\nu)(1-\gamma)\right]A_{1}\\
  &+\frac{ 1}{2}m^{2}_{2}(\nu)\left[1-\gamma+\frac{\rho^{2}}{\gamma}(1-\gamma)^{2}\right]A_{1}^{2}=0,
\end{align*}
which shows that the PDE \eqref{eq34} is solvable under the condition (B.9).

\noindent\textbf{Exponential-quadratic:} Assume that $h(t,\nu)=\exp\{A_{0}(T-t)+A_{1}(T-t)\nu+\frac{1}{2}A_{2}(T-t)\nu^{2}\}$.

Similarly, one has
\begin{align*}
  -A'_{0}&-A'_{1}\nu-\frac{1}{2}A'_{2}\nu^{2}+r-\beta+\frac{1}{2\gamma}\eta^{2}(\nu)
  +\beta(\ln\beta-A_{0}-A_{1}\nu-\frac{1}{2}A_{2}\nu^{2})\\
  &+\left[m_{1}(\nu)+\frac{\eta(\nu)}{\gamma}\rho m_{2}(\nu)(1-\gamma)\right](A_{1}+A_{2}\nu)\\
  &+\frac{ 1}{2}m^{2}_{2}(\nu)\left[(1-\gamma)(A_{1}+A_{2}\nu)^{2}+A_{2}\right]
  +\frac{\rho^{2}}{2\gamma}m^{2}_{2}(\nu)(1-\gamma)^{2}(A_{1}+A_{2}\nu)^{2}=0,
\end{align*}
which means that the PDE \eqref{eq34} is solvable under the condition (B.7).
\end{proof}

\section{Proof of Theorem \ref{theorem4.4}}
\begin{proof}
We will verify that the conjecture form \eqref{eq41} does not work in solving the PDE \eqref{eq40} under the cases when $n=3$ and $n>3$, respectively.

\noindent\textbf{Exponential-cubic:} Assume that $h(t,\nu)=\exp\{A_{0}(T-t)+A_{1}(T-t)\nu+\frac{1}{2}A_{2}(T-t)\nu^{2}+\frac{1}{3}A_{3}(T-t)\nu^{3}\}$.

By substituting the partial derivatives of $h$ into \eqref{eq40}, we can derive that
\begin{align}\label{eq42}
  \frac{1}{1-\phi}&(A'_{0}+A'_{1}\nu+\frac{1}{2}A'_{2}\nu^{2}+\frac{1}{3}A'_{3}\nu^{3})+r-\zeta_{3}-\zeta_{4}\phi \ln\beta+\zeta_{4}(A_{0}+A_{1}\nu+\frac{1}{2}A_{2}\nu^{2}+\frac{1}{3}A_{3}\nu^{3})\nonumber\\
  &+\frac{1}{2\gamma}\eta^{2}(\nu)+\frac{\beta\phi}{\phi-1}\left[\frac{1}{\beta}\bigg(\zeta_{3}+\zeta_{4}(\phi \ln\beta-A_{0}-A_{1}\nu-\frac{1}{2}A_{2}\nu^{2}-\frac{1}{3}A_{3}\nu^{3})\bigg)-1\right]\nonumber\\
  &-\frac{1}{1-\phi}\left[m_{1}(\nu)+\frac{\eta(\nu)}{\gamma}\rho m_{2}(\nu)(1-\gamma)\right](A_{1}+A_{2} \nu+A_{3} \nu^{2})\nonumber\\
  &+\frac{ 1}{2(1-\phi)^{2}}m^{2}_{2}(\nu)
  \left[2-\phi-\gamma+\frac{\rho^{2}(1-\gamma)^{2}}{\gamma}\right](A_{1}+A_{2} \nu+A_{3} \nu^{2})^{2}\nonumber\\
  &-\frac{ 1}{2(1-\phi)}m^{2}_{2}(\nu)\left[(A_{1}+A_{2} \nu+A_{3} \nu^{2})^{2}+A_{2}+2A_{3} \nu\right]=0,
\end{align}
which has terms involving $\nu^{4}$. By the proof of Theorem \ref{theorem4.2}, the terms involving $\nu^{4}$ in \eqref{eq42} cannot be matched under the condition (B.7) and thus the exponential-cubic form of $h$ cannot solve the PDE \eqref{eq40}.

\noindent\textbf{Exponential-nth:} Assume that $h(t,\nu)=\exp\{A_{0}(T-t)+\sum\limits_{k=1}^{n}\frac{1}{k}A_{k}(T-t)\nu^{k}\}$ with $n>3$.

Substituting the partial derivatives of $h$ into \eqref{eq40}, we have
\begin{align*}
  \frac{1}{1-\phi}&(A'_{0}+\sum\limits_{k=1}^{n}\frac{1}{k}A'_{k}\nu^{k})+r-\zeta_{3}-\zeta_{4}\left(\phi \ln\beta-A_{0}-\sum\limits_{k=1}^{n}\frac{1}{k}A_{k}\nu^{k}\right)\\
  &+\frac{\beta\phi}{\phi-1}\left[\frac{1}{\beta}\bigg(\zeta_{3}+\zeta_{4}(\phi \ln\beta-A_{0}-\sum\limits_{k=1}^{n}\frac{1}{k}A_{k}\nu^{k})\bigg)-1\right]\\
  &+\frac{1}{2\gamma}\eta^{2}(\nu)-\frac{1}{1-\phi}\left[m_{1}(\nu)+\frac{\eta(\nu)}{\gamma}\rho m_{2}(\nu)(1-\gamma)\right]\sum\limits_{k=1}^{n}A_{k}\nu^{k-1}\nonumber\\
  &+\frac{ 1}{2(1-\phi)^{2}}m^{2}_{2}(\nu)\left[2-\phi-\gamma+\frac{\rho^{2}(1-\gamma)^{2}}{\gamma}\right]\left(\sum\limits_{k=1}^{n}A_{k}\nu^{k-1}\right)^{2}\\
  &-\frac{ 1}{2(1-\phi)}m^{2}_{2}(\nu)\left[\left(\sum\limits_{k=1}^{n}A_{k}\nu^{k-1}\right)^{2}+\sum\limits_{k=2}^{n}(k-1)A_{k}\nu^{k-2}\right]=0,
\end{align*}
which involves $\nu^{n+1},\cdot\cdot\cdot,\nu^{2n-2}$ for $n>3$. It follows from the proof of Theorem \ref{theorem4.2} that these terms cannot be matched under the condition (B.7). Thus, when $n>3$, the conjecture of the exponential-polynomial form of $h$ cannot solve the PDE \eqref{eq40}.
\end{proof}

\section{Proof of Theorem \ref{theorem4.12}}
\begin{proof}
We will point out the solvability of the PDE \eqref{eq40} in terms of $\eta(\nu)$, $m_{1}(\nu)$ and $m_{2}(\nu)$ under the conjecture form \eqref{eq41} when $n=0,1,2$, which reflects that the PDE \eqref{eq39} can be approximately solvable under the log-linear approximation method.

\noindent\textbf{Exponential-constant:} Assume that $h(t,\nu)=\exp\{A_{0}(T-t)\}$.

Substituting the partial derivatives of $h$ into \eqref{eq40} yields
\begin{equation*}
  \frac{A'_{0}}{1-\phi}+r-\zeta_{3}-\zeta_{4}(\phi \ln\beta-A_{0})+\frac{1}{2\gamma}\eta^{2}(\nu)+\frac{\beta\phi}{\phi-1}\left[\frac{1}{\beta}\bigg(\zeta_{3}+\zeta_{4}(\phi \ln\beta-A_{0})\bigg)-1\right]=0,
\end{equation*}
which implies that the PDE \eqref{eq40} can be solvable under the condition (B.8).

\noindent\textbf{Exponential-linear:} Assume that $h(t,\nu)=\exp\{A_{0}(T-t)+A_{1}(T-t)\nu\}$.

Similarly, one can obtain that
\begin{align*}
  \frac{1}{1-\phi}&(A'_{0}+A'_{1}\nu)+r-\zeta_{3}-\zeta_{4}(\phi \ln\beta-A_{0}-A_{1}\nu)+\frac{1}{2\gamma}\eta^{2}(\nu)\\
  &+\frac{\beta\phi}{\phi-1}\left[\frac{1}{\beta}\bigg(\zeta_{3}+\zeta_{4}(\phi \ln\beta-A_{0}-A_{1}\nu)\bigg)-1\right]
  -\frac{1}{1-\phi}\left[m_{1}(\nu)+\frac{\eta(\nu)}{\gamma}\rho m_{2}(\nu)(1-\gamma)\right]A_{1}\nonumber\\
  &+\frac{ 1}{2(1-\phi)^{2}}m^{2}_{2}(\nu)\left[2-\phi-\gamma+\frac{\rho^{2}(1-\gamma)^{2}}{\gamma}\right]A_{1}^{2}-\frac{ 1}{2(1-\phi)}m^{2}_{2}(\nu)A_{1}^{2}=0,
\end{align*}
which shows that the PDE \eqref{eq40} can be solvable under the condition (B.9).

\noindent\textbf{Exponential-quadratic:} Assume that $h(t,\nu)=\exp\{A_{0}(T-t)+A_{1}(T-t)\nu+\frac{1}{2}A_{2}(T-t)\nu^{2}\}$.

Inserting the partial derivatives of $h$ into \eqref{eq40} leads to
\begin{align*}
  \frac{1}{1-\phi}&(A'_{0}+A'_{1}\nu+\frac{1}{2}A'_{2}\nu^{2})+r-\zeta_{3}-\zeta_{4}(\phi \ln\beta-A_{0}-A_{1}\nu-\frac{1}{2}A_{2}\nu^{2})\\
  &+\frac{\beta\phi}{\phi-1}\left[\frac{1}{\beta}\bigg(\zeta_{3}+\zeta_{4}(\phi \ln\beta-A_{0}-A_{1}\nu-\frac{1}{2}A_{2}\nu^{2})\bigg)-1\right]\\
  &+\frac{1}{2\gamma}\eta^{2}(\nu)-\frac{1}{1-\phi}\left[m_{1}(\nu)+\frac{\eta(\nu)}{\gamma}\rho m_{2}(\nu)(1-\gamma)\right](A_{1}+A_{2} \nu)\nonumber\\
  &+\frac{ 1}{2(1-\phi)^{2}}m^{2}_{2}(\nu)\left[2-\phi-\gamma+\frac{\rho^{2}(1-\gamma)^{2}}{\gamma}\right](A_{1}+A_{2} \nu)^{2}-\frac{ 1}{2(1-\phi)}m^{2}_{2}(\nu)\left[(A_{1}+A_{2} \nu)^{2}+A_{2}\right]=0,
\end{align*}
which means that the PDE \eqref{eq40} can be solvable under the condition (B.7).
\end{proof}

\section{Proof of Proposition \ref{proposition5.1}}
\begin{proof}
{\rm(i)} Since $\eta^{2}(\nu)$, $m_{1}(\nu)$, $\eta(\nu) m_{2}(\nu)$ and $m^{2}_{2}(\nu)$ are linear in $\nu$, it follows from the case of exponential-linear in the proof of Theorem \ref{theorem4.6} that
\begin{align*}
  r&-\beta+\frac{1}{2\gamma}\xi^{2}\nu+\beta(\ln\beta-\hat{A}_{0}-\hat{A}_{1}\nu)+\left[\kappa(\theta-\nu)+\frac{\xi \sqrt{\nu}}{\gamma}\rho \sigma\sqrt{\nu}(1-\gamma)\right]\hat{A}_{1}\\
  &+\frac{ 1}{2}\sigma^{2}\nu\left[1-\gamma+\frac{\rho^{2}}{\gamma}(1-\gamma)^{2}\right]\hat{A}_{1}^{2}=0.
\end{align*}
Collecting constant terms and terms in $\nu$, we obtain $\hat{A}_{0}$ and $\hat{A}_{1}$ as solutions to the following equations
\begin{equation}\label{eq49}
  \hat{A}_{0}=\ln \beta-1+\frac{r+\kappa \theta \hat{A}_{1} }{\beta}
\end{equation}
and
\begin{equation}\label{eq50}
  \hat{a}_{1}(\hat{A}_{1})^{2}+\hat{a}_{2} \hat{A}_{1} + \hat{a}_{3}=0,
\end{equation}
where $\hat{a}_{1}=\frac{ \sigma^{2}}{2}\left[1-\gamma+\frac{\rho^{2}}{\gamma}(1-\gamma)^{2}\right]$, $\hat{a}_{2}=\frac{\xi \rho \sigma(1-\gamma)}{\gamma}-\beta-\kappa$ and $\hat{a}_{3}=\frac{\xi^{2}}{2\gamma}$. Since $\hat{a}_{1}<0$ and $\hat{a}_{3}>0$, the equation \eqref{eq50} that is quadratic in $\hat{A}_{1}$ has two opposite sign roots. We choose the positive root $\hat{A}_{1}>0$. This root ensures that $\hat{A}_{1}$ has a limit when $\gamma=1$, which is equivalent to the solution in the case of logarithmic utility. Given $\hat{A}_{1}$, one can know $\hat{A}_{0}$. Moreover, one can obtain \eqref{eq47} and \eqref{eq48}.

\textbf{The solution for} $\gamma=1$.

It is not hard to show that the HJB equation \eqref{eq18} with the logarithmic utility can be solved with $\hat{c}^{*}_{t}=\beta x$, $\hat{\psi}^{*}_{t}=\xi\sqrt{\nu}$ and $\hat{\omega}(x,\nu)=\ln x+\hat{A}_{0}+\hat{A}_{1}\nu$, where $\hat{A}_{0}$ and $\hat{A}_{1}$ are given by equations \eqref{eq49} and \eqref{eq50}, respectively. The coefficients in the equation \eqref{eq50} satisfy $\hat{a}_{1}=0$, $\hat{a}_{2}=-\beta-\kappa$ and $\hat{a}_{3}=\frac{\xi^{2}}{2}$. There exists only one root
\begin{equation}\label{eq51}
  \hat{A}_{1}=\frac{\xi^{2}}{2(\beta+\kappa)}>0.
\end{equation}
Thus, only the positive root in \eqref{eq50} has a limit equals \eqref{eq51} when $\gamma=1$.

{\rm(ii)} Similarly, from the case of exponential-linear in the proof of Theorem \ref{theorem4.8}, we can derive that
\begin{align*}
  r&-\check{\zeta}_{1}-\check{\zeta}_{2}(\phi \ln\beta-\check{A}_{0}-\check{A}_{1}\nu)+\frac{1}{2\gamma}\xi^{2}\nu+\frac{\beta\phi}{\phi-1}\left[\frac{1}{\beta}\bigg(\check{\zeta}_{1}+\check{\zeta}_{2}(\phi \ln\beta-\check{A}_{0}-\check{A}_{1}\nu)\bigg)-1\right]\\
  &-\frac{1}{1-\phi}\left[\kappa(\theta-\nu)+\frac{\xi \sqrt{\nu}}{\gamma}\rho \sigma\sqrt{\nu}(1-\gamma)\right]\check{A}_{1}\nonumber\\
  &+\frac{ \sigma^{2}\nu}{2(1-\phi)^{2}}\left[2-\phi-\gamma+\frac{\rho^{2}(1-\gamma)^{2}}{\gamma}\right]\check{A}_{1}^{2}-\frac{ \sigma^{2}\nu}{2(1-\phi)}\check{A}_{1}^{2}=0,
\end{align*}
where $\check{\zeta}_{1}=\exp\{\mathbb{E}(\ln\check{c}^{*}_{t}-\ln x)\}\left[1-\mathbb{E}(\ln\check{c}^{*}_{t}-\ln x)\right]$ and $\check{\zeta}_{2}=\exp\{\mathbb{E}(\ln\check{c}^{*}_{t}-\ln x)\}$ with the optimal consumption strategy $\check{c}^{*}_{t}$ and the corresponding wealth value $x$. Comparing constant terms and terms in $v$, we know that $\check{A}_{0}$ and $\check{A}_{1}$ satisfy
\begin{equation}\label{eq52}
  \check{A}_{0}=\phi\ln \beta+\frac{r(\phi-1)+\check{\zeta}_{1}-\beta\phi+\kappa\theta\check{A}_{1}}{\check{\zeta}_{2}}
\end{equation}
and
\begin{equation}\label{eq53}
  \check{a}_{1}(\check{A}_{1})^{2}+\check{a}_{2} \check{A}_{1} + \check{a}_{3}=0,
\end{equation}
where $\check{a}_{1}=\frac{ \sigma^{2}}{2(1-\phi)^{2}}\left[1-\gamma+\frac{\rho^{2}(1-\gamma)^{2}}{\gamma}\right]$, $\check{a}_{2}=\frac{1}{1-\phi}\left[\check{\zeta}_{2}+\kappa-\frac{\xi \rho \sigma(1-\gamma)}{\gamma}\right]$ and $\check{a}_{3}=\frac{\xi^{2}}{2\gamma}$. Because $\check{a}_{1}<0$ and $\check{a}_{3}>0$, the equation \eqref{eq53} has two opposite sign roots. We choose the root such that $-\frac{\check{A}_{1}}{1-\phi}>0$ to guarantee the approximate solution can approach to the solution when $\phi=1$. Given $\check{A}_{1}$, we know $\check{A}_{0}$. In addition, we can obtain \eqref{eq54} and \eqref{eq55}.

{\rm(iii)} By the case of exponential-linear in the proof of Theorem \ref{theorem4.10}, one has
\begin{align*}
   -\tilde{A}'_{0}&-\tilde{A}'_{1}\nu+r-\beta+\frac{1}{2\gamma}\xi^{2}\nu+\beta(\ln\beta-\tilde{A}_{0}-\tilde{A}_{1}\nu)+\left[\kappa(\theta-\nu)+\frac{\xi \sqrt{\nu}}{\gamma}\rho \sigma\sqrt{\nu}(1-\gamma)\right]\tilde{A}_{1}\\
  &+\frac{\sigma^{2}\nu}{2}\left[1-\gamma+\frac{\rho^{2}}{\gamma}(1-\gamma)^{2}\right]\tilde{A}_{1}^{2}=0
\end{align*}
with boundary conditions $\tilde{A}_{0}(0)=\ln \varepsilon$ and $\tilde{A}_{1}(0)=0$. Collecting constant terms and terms in $v$, one can derive that
\begin{equation}\label{eq60}
  \tilde{A}'_{0}+\beta\tilde{A}_{0}=r-\beta+\beta \ln\beta+\kappa\theta\tilde{A}_{1}
\end{equation}
and
\begin{equation}\label{eq61}
 \tilde{A}'_{1}=\frac{\tilde{a}_{1}}{2}\tilde{A}_{1}^{2}-\tilde{a}_{2}\tilde{A}_{1}+\frac{\tilde{a}_{3}}{2},
\end{equation}
where $\tilde{a}_{1}=\sigma^{2}\left[1-\gamma+\frac{\rho^{2}}{\gamma}(1-\gamma)^{2}\right]$, $\tilde{a}_{2}=\beta+\kappa-\frac{\xi \rho \sigma(1-\gamma)}{\gamma}$ and $\tilde{a}_{3}=\frac{\xi^{2}}{\gamma}$. Let $\tilde{a}_{4}=\sqrt{\tilde{a}^{2}_{2}-\tilde{a}_{1}\tilde{a}_{3}}$. For \eqref{eq61}, we have
\begin{equation}\label{eq57}
 \tilde{A}_{1}(\tau)=\frac{\tilde{a}_{3}(e^{\tilde{a}_{4}\tau}-1)}{2\tilde{a}_{4}+(\tilde{a}_{2}+\tilde{a}_{4})(e^{\tilde{a}_{4}\tau}-1)}
\end{equation}
with $\tau=T-t$. Then we can obtain
\begin{equation}\label{eq56}
  \tilde{A}_{0}(\tau)=\ln \varepsilon \times e^{-\beta \tau}+\left(\frac{r}{\beta}-1+\ln\beta\right)(1-e^{-\beta \tau})+\kappa\theta\int_{0}^{\tau}\tilde{A}_{1}(s)e^{-\beta (\tau-s)}\mathrm{d}s.
\end{equation}
Moreover, we have \eqref{eq58} and \eqref{eq59}.

{\rm(iv)} According to the case of exponential-linear in the proof of Theorem \ref{theorem4.12}, we can obtain
\begin{align*}
  \frac{1}{1-\phi}&(\underline{A}'_{0}+\underline{A}'_{1}\nu)+r-\underline{\zeta}_{3}-\underline{\zeta}_{4}(\phi \ln\beta-\underline{A}_{0}-\underline{A}_{1}\nu)+\frac{1}{2\gamma}\xi^{2}\nu\\
  &+\frac{\beta\phi}{\phi-1}\left[\frac{1}{\beta}\bigg(\underline{\zeta}_{3}+\underline{\zeta}_{4}(\phi \ln\beta-\underline{A}_{0}-\underline{A}_{1}\nu)\bigg)-1\right]
  -\frac{1}{1-\phi}\left[\kappa(\theta-\nu)+\frac{\xi \sqrt{\nu}}{\gamma}\rho \sigma\sqrt{\nu}(1-\gamma)\right]\underline{A}_{1}\nonumber\\
  &+\frac{ \sigma^{2}\nu}{2(1-\phi)^{2}}\left[2-\phi-\gamma+\frac{\rho^{2}(1-\gamma)^{2}}{\gamma}\right]\underline{A}_{1}^{2}-\frac{ \sigma^{2}\nu}{2(1-\phi)}\underline{A}_{1}^{2}=0,
\end{align*}
where $\underline{\zeta}_{3}=\exp\{(\ln\underline{c}^{*}_{t}-\ln x)|_{\nu_{t}=\theta}\}\left[1-(\ln\underline{c}^{*}_{t}-\ln x)|_{\nu_{t}=\theta}\right]$ and $\underline{\zeta}_{4}=\exp\{(\ln\underline{c}^{*}_{t}-\ln x)|_{\nu_{t}=\theta}\}$ with the optimal consumption strategy $\underline{c}^{*}_{t}$ and the corresponding wealth value $x$. Comparing constant terms and terms in $v$ leads to
\begin{equation}\label{eq62}
  \underline{A}'_{0}+\underline{\zeta}_{4}\underline{A}_{0}=r(\phi-1)+\underline{\zeta}_{3}+\underline{\zeta}_{4}\phi \ln\beta-\beta\phi+\kappa\theta\underline{A}_{1}
\end{equation}
and
\begin{equation}\label{eq63}
 \underline{A}'_{1}=\frac{\underline{a}_{1}}{2}\underline{A}_{1}^{2}-\underline{a}_{2}\underline{A}_{1}+\frac{\underline{a}_{3}}{2},
\end{equation}
where $\underline{a}_{1}=\frac{\sigma^{2}}{\phi-1}\left[1-\gamma+\frac{\rho^{2}}{\gamma}(1-\gamma)^{2}\right]$, $\underline{a}_{2}=\underline{\zeta}_{4}+\kappa-\frac{\xi \rho \sigma(1-\gamma)}{\gamma}$ and $\underline{a}_{3}=\frac{\xi^{2}(\phi-1)}{\gamma}$.
Thus, for \eqref{eq62}, one has
\begin{equation}\label{eq65}
  \underline{A}_{0}(\tau)=\ln \varepsilon \times e^{-\int_{0}^{\tau}\underline{\zeta}_{4}(s)\mathrm{d}s}+\int_{0}^{\tau}\left[r(\phi-1)+\underline{\zeta}_{3}(s)+\underline{\zeta}_{4}(s)\phi \ln\beta-\beta\phi+\kappa\theta\underline{A}_{1}(s)\right]e^{- \int_{s}^{\tau}\underline{\zeta}_{4}(u)\mathrm{d}u}\mathrm{d}s.
\end{equation}
In addition, we obtain \eqref{eq66} and \eqref{eq67}.

Overall, it follows from the cases of (i)-(iv) that the optimal consumption strategies are proportional to the current wealth value $X_{t}$ and the optimal investment strategies are independent of $X_{t}$. Thus, substituting the optimal consumption-investment strategy under the cases of (i)-(iv) into \eqref{eq46} respectively, we can know that $X_{t}$ satisfies geometric Brownian motion. Given a positive initial value $X_{0}$, $X_{t}$ remains positive for $t>0$, which implies that the optimal consumption strategies for the cases of (i)-(iv) are also positive. Moreover, for the cases (i) and (iii) , it can be verified whether the consumption strategies $\hat{c}^{*}_{t}$ and $\tilde{c}^{*}_{t}$ are within the interval $[C_{m},C_{M}]$ based on the specific values of the model parameters.

Furthermore, for the case (i) with the optimal consumption-investment strategy \eqref{eq47} and the corresponding wealth process $\hat{X}_{t}$, one has
\begin{align*}
  \mathbb{E}_{t}\left[\int^{\infty}_{t}\left|f(\hat{c}^{*}_{s},J_{s})\right|\mathrm{d}s\right]&=
  \mathbb{E}_{t}\left[\left|\frac{\hat{X}_{t}^{1-\gamma}}{1-\gamma}e^{(1-\gamma)(\hat{A}_{0}+\hat{A}_{1}\nu_{t})}\right|\right]\nonumber\\
  &=\mathbb{E}_{t}\left[\left|\frac{x_{0}^{1-\gamma}}{1-\gamma}e^{(1-\gamma)\left[\int^{t}_{0}\left(r-\beta+\xi\hat{\pi}^{*}_{s}\nu_{s}-\frac{1}{2}\hat{\pi}_{s}^{2}\nu_{s}\right)\mathrm{d}s
  +\int^{t}_{0}\hat{\pi}^{*}_{s}\sqrt{\nu_{s}}(\rho\mathrm{d}W^{\nu}_{s}+\sqrt{1-\rho^{2}}\mathrm{d}W^{\perp}_{s})+\hat{A}_{0}+\hat{A}_{1}\nu_{t}\right]}\right|\right]\nonumber\\
  &\leq K\mathbb{E}_{t}\left[\left|e^{(1-\gamma)\left[\int^{t}_{0}\left(\xi\hat{\pi}^{*}_{s}\nu_{s}-\frac{1}{2}\hat{\pi}_{s}^{2}\nu_{s}\right)\mathrm{d}s
  +\int^{t}_{0}\hat{\pi}^{*}_{s}\sqrt{\nu_{s}}(\rho\mathrm{d}W^{\nu}_{s}+\sqrt{1-\rho^{2}}\mathrm{d}W^{\perp}_{s})\right]}\right|\right],
\end{align*}
where the last inequality follows from the fact that $\gamma>1$, $\hat{A}_{1}>0$ and $\nu_{t}>0$. Next, we aim to find an estimate for $e^{(1-\gamma)\left[\int^{t}_{0}\left(\xi\hat{\pi}^{*}_{s}\nu_{s}-\frac{1}{2}\hat{\pi}_{s}^{2}\nu_{s}\right)\mathrm{d}s
  +\int^{t}_{0}\hat{\pi}^{*}_{s}\sqrt{\nu_{s}}(\rho\mathrm{d}W^{\nu}_{s}+\sqrt{1-\rho^{2}}\mathrm{d}W^{\perp}_{s})\right]}$. We note that
\begin{align*}
e^{(1-\gamma)\left[\int^{t}_{0}\left(\xi\hat{\pi}^{*}_{s}\nu_{s}-\frac{1}{2}\hat{\pi}_{s}^{2}\nu_{s}\right)\mathrm{d}s
  +\int^{t}_{0}\hat{\pi}^{*}_{s}\sqrt{\nu_{s}}(\rho\mathrm{d}W^{\nu}_{s}+\sqrt{1-\rho^{2}}\mathrm{d}W^{\perp}_{s})\right]}
=&\underbrace{e^{(1-\gamma)\int^{t}_{0}\left(\xi\hat{\pi}^{*}_{s}\nu_{s}-\frac{1}{2}\hat{\pi}_{s}^{2}\nu_{s}\right)\mathrm{d}s
+(1-\gamma)^{2}\int^{t}_{0}\hat{\pi}_{s}^{2}\nu_{s}\mathrm{d}s}}_{A}\\
&\times\underbrace{e^{-(1-\gamma)^{2}\int^{t}_{0}\hat{\pi}_{s}^{2}\nu_{s}\mathrm{d}s
  +(1-\gamma)\int^{t}_{0}\hat{\pi}^{*}_{s}\sqrt{\nu_{s}}(\rho\mathrm{d}W^{\nu}_{s}+\sqrt{1-\rho^{2}}\mathrm{d}W^{\perp}_{s})}}_{B}.
\end{align*}
For the term B, since $(1-\gamma)\hat{\pi}^{*}_{s}$ is  deterministic and bounded on $[0,\infty)$, it follows from Lemma 4.3 in \cite{ZengX2013} that
\begin{equation}\label{eq88}
  \mathbb{E}[B^{2}]=\mathbb{E}\left[e^{-2(1-\gamma)^{2}\int^{t}_{0}\hat{\pi}_{s}^{2}\nu_{s}\mathrm{d}s
  +2(1-\gamma)\int^{t}_{0}\hat{\pi}^{*}_{s}\sqrt{\nu_{s}}(\rho\mathrm{d}W^{\nu}_{s}+\sqrt{1-\rho^{2}}\mathrm{d}W^{\perp}_{s})}\right]<\infty.
\end{equation}
For the term A, we can derive that
$$
\mathbb{E}[A^{2}]=e^{2(1-\gamma)\int^{t}_{0}\left(\xi\hat{\pi}^{*}_{s}\nu_{s}-\frac{1}{2}\hat{\pi}_{s}^{2}\nu_{s}\right)\mathrm{d}s
+2(1-\gamma)^{2}\int^{t}_{0}\hat{\pi}_{s}^{2}\nu_{s}\mathrm{d}s.}
$$
By Theorem 5.1 in \cite{ZengX2013}, one has that $\mathbb{E}[A^{2}]<\infty$ if the assumption
$$
2(1-\gamma)\xi\hat{\pi}^{*}_{s}-(1-\gamma)\hat{\pi}_{s}^{2}+2(1-\gamma)^{2}\hat{\pi}_{s}^{2}\leq\frac{\kappa^{2}}{2\sigma^{2}}
$$
is satisfied, which is equivalent to the inequality
$$
(1-2\gamma)\rho^{2}\sigma^{2}(1-\gamma)^{2}\hat{A}_{1}^{2}+2\xi\rho\sigma(1-\gamma)^{2}\hat{A}_{1}+\xi^{2}\leq\frac{\kappa^{2}\gamma^{2}}{2(1-\gamma)\sigma^{2}}.
$$
Thus, one has
$$
\mathbb{E}_{t}\left[\int^{\infty}_{t}\left|f(\hat{c}^{*}_{s},J_{s})\right|\mathrm{d}s\right]\leq K\mathbb{E}_{t}\left[AB\right]\leq K \left(\mathbb{E}\left[A^{2}\right] \mathbb{E}\left[B^{2}\right]\right)^{\frac{1}{2}}<\infty.
$$
The arguments for the cases (ii)-(iv) are similar to those in the case (i), we skip a detailed discussion.
\end{proof}

\end{document}